\newtheorem{theorem}{Theorem}
\newtheorem*{theorem1}{Proposition A}
\newtheorem{lemma}{Lemma}
\renewcommand{\epsilon}{\varepsilon}
\renewcommand{\phi}{\varphi}
\renewcommand{\emptyset}{\varnothing}
\newcommand{\im}{{\rm i}}
\newcommand{\e}{\varepsilon}
\newcommand{\la}{\lambda}
\renewcommand{\le}{\leqslant}
\renewcommand{\ge}{\geqslant}
\newcommand{\bR}{\mathbb R}
\newcommand{\bC}{\mathbb C}
\newcommand{\bZ}{\mathbb Z}
\newcommand{\bD}{\mathbb D}
\newcommand{\bT}{\mathbb T}
\newcommand{\bN}{\mathbb N}
\newcommand{\bQ}{\mathbb Q}
\newcommand{\bP}{\mathbb P}
\newcommand{\bE}{\mathbb E}
\newcommand{\rd}{{\rm d}}
\newcommand{\dist}{\operatorname{dist}}
\numberwithin{equation}{subsection}
\numberwithin{lemma}{subsection}
\date{}
\begin{document}

\title{Entire functions of exponential type represented
by pseudo-random and random Taylor series}
\author{
Alexander Borichev \and Alon Nishry \and Mikhail Sodin}

\maketitle

\hfill{To Alex Eremenko on occasion of his birthday}

\begin{abstract}We\let\thefootnote\relax\footnote{A.~Nishry and M.~Sodin were supported by Grant No.~166/11 of the Israel Science Foundation of the
Israel Academy of Sciences and Humanities. A.~Nishry was supported by U.S. National Science
Foundation Grant DMS-1128155.
M.~Sodin was supported by Grant No.~2012037 of
the United States--Israel Binational Science Foundation.} study the influence of the multipliers $\xi (n)$ on the angular
distribution of zeroes of the Taylor series
\[
F_\xi (z) = \sum_{n\ge 0} \xi (n) \frac{z^n}{n!}\,.
\]
We show that the distribution of zeroes of $ F_\xi $ is
governed by certain autocorrelations of the sequence $ \xi $.
Using this guiding principle, we consider several examples
of random and pseudo-random sequences $\xi$ and, in particular,
answer some questions posed by Chen and Littlewood in 1967. 

As a by-product we show that if $\xi$ is a stationary random integer-valued
sequence, then either it is periodic, or its spectral measure has no
gaps in its support. The same conclusion is true if $\xi$ is a complex-valued
stationary ergodic sequence that takes values in a uniformly discrete set.
\end{abstract}

\section{Introduction}
In
 this work, we consider entire functions of exponential type represented by the
Taylor series
\[
F_\xi (z) = \sum_{n\ge 0} \xi(n) \frac{z^n}{n!}\,, \qquad \xi\colon \bZ_+ \to \bC\,.
\]
We are interested in the influence of the multipliers $\xi(n)$ on the angular
distribution of zeroes of the function $F_\xi$. This question belongs to a ``terra
incognita'' in the theory of entire functions that contains no general results going in
this direction but several interesting examples. These examples include:

\medskip\noindent
(a) random independent identically distributed $\xi (n)$ (Littlewood--Offord~\cite{LO}, Kabluchko--Zaporozhets~\cite{KZ}),

\medskip\noindent
(b) $\xi (n) = e(q n^2) $ with
quadratic irrationality $q$ (Nassif~\cite{Nassif},
Littlewood~\cite{Littlewood}) and, more generally, arbitrary irrational $q$ (Eremenko--Ostrovskii~\cite{EO}),

\medskip\noindent
(c) $\xi (n) = e(n(\log n)^\beta)$ with $\beta>1$, and $e(n^\beta)$ with $1<\beta<\frac32$ (Chen--Littlewood~\cite{CL}),

\medskip\noindent
(d) uniformly almost periodic $\xi (n)$ (Levin\cite[Chapter~VI, \S 7]{Levin}),

\medskip\noindent Here and elsewhere, $e(t)=e^{2\pi\rm t\im}$.

\medskip
In this work, we consider the following four sequences $\xi$:

\medskip\noindent (i)
$\xi (n) = e(Q(n))$, where $Q(x)=\sum_{k\ge 2}q_kx^k$ is a polynomial  
with real coefficients $q_k$, at least one of which is irrational.

\medskip\noindent (ii)
$\xi (n) = e(n^\beta)$, where $\beta\ge \frac32$ is non-integer.

\medskip\noindent (iii)
$\xi (n)$ is a stationary sequence with a mild decay of the
maximal correlation coefficient.

\medskip\noindent (iv)
$\xi (n)$ is a stationary Gaussian sequence.

\medskip In the cases
(i), (ii), and (iii),
using some potential theory, we reduce the question on
the asymptotic distribution of zeroes of $F_\xi$ to certain lower bounds for the exponential sums
\[
W_R(\theta) = \sum_{|n|\le N} \xi (n+R) e(n\theta) e^{-\frac{n^2}{2R}}
\]
when $R \gg 1$ and $N$ has the size $R^{\frac12 + \epsilon}$ 
(see Lemmas~\ref{lemma4} and~\ref{lemma4.5}).
These lower bounds, in turn, depend on the behaviour of the autocorrelations 
\[
m \mapsto \frac1{N}\, \sum_{n=1}^N \xi (n+R)\,\overline{\xi (n+m+R)}\, e(m\theta)\,.
\]
In the case (iv) (similarly to the almost-periodic case (d)),
the zero set of $F_\xi$ has an angular density that, generally speaking, is
not constant, as in the cases (i), (ii) and (iii). This density is determined by the spectrum
of the sequence $\xi$, that is, after all, also by the autocorrelations between the
elements of $\xi$.

\subsubsection*{Acknowledgments}
We thank Alex Eremenko, Fedja Nazarov, and Benjy Weiss for illuminating  discussions.
We are grateful to the referee for helpful comments and remarks.

\section{Main results}

\subsection{}
We start with the cases when the zeroes of $F_\xi$ have the uniform angular distribution.

\medskip\noindent{\bf Definition.}
We say that the sequence $\xi\colon \bZ_+\to\bC$ is {\em an $L$-sequence},
if
\begin{equation}\label{eq:log-as}
\frac{\log |F_\xi (tz)|}{t} \underset{t\to\infty}\longrightarrow |z|\,, \qquad {\rm in\ }L^1_{\rm loc}(\mathbb C)\,.
\end{equation}

\medskip
Since $L^1_{\rm loc}(\mathbb C)$ convergence implies convergence in the sense of distributions, and the Laplacian is continuous in the distributional topology, ~\eqref{eq:log-as} yields
\begin{equation}\label{eq:zeroes-as}
\frac1{t}\, \Delta \log |F_\xi (tz)| \underset{t\to\infty}\longrightarrow \Delta |z| = \rd r \otimes \rd \theta\,,
\qquad z=re^{\im\theta}\,,
\end{equation}
in the sense of distributions, with $rd r \otimes \rd \theta$ being planar Lebesgue measure. 
Denoting by $n_F(r; \theta_1, \theta_2)$ the number of zeroes (counted with multiplicities)
of the entire function $F$ in the sector $\bigl\{z\colon 0\le |z|\le r, \theta_1 \le \arg (z) < \theta_2 \bigr\}$
and recalling that $\frac1{2\pi}\, \Delta \log |F|$ is the sum of point masses at zeroes of $F$, we
can rewrite~\eqref{eq:zeroes-as} in a more traditional form:
for every $\theta_1<\theta_2$,
\begin{equation}\label{eq:zeroes-as'}
n_{F_\xi}(r; \theta_1, \theta_2) = \frac{(\theta_2 - \theta_1 + o(1))\,r}{2\pi}
\qquad {\rm as\ } r\to\infty\,.
\end{equation}

\begin{theorem}\label{thm:Q}
Suppose that
$$ 
Q(x)=\sum_{k=2}^d q_k x^k
$$
is a polynomial with real coefficients $q_k$
and that at least one of the coefficients $q_k$ is irrational. Then $\xi (n) = e(Q(n))$ is an $L$-sequence.
\end{theorem}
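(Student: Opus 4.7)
The plan is to deduce Theorem~\ref{thm:Q} from the reduction described in the introduction (Lemmas~\ref{lemma4} and~\ref{lemma4.5}), which converts the $L$-sequence property for a unit-modulus sequence into a quantitative lower bound on the exponential sum $W_R(\theta)$ on a subset of $[0,1]$ of full Lebesgue measure, and ties this bound to the decay of the autocorrelations of $\xi$. The upper bound $\limsup_{t\to\infty} t^{-1}\log|F_\xi(tz)| \le |z|$ is immediate from $|\xi(n)|=1$ via $|F_\xi(z)|\le e^{|z|}$, so the content of the theorem lies entirely in verifying the required autocorrelation estimate for $\xi(n) = e(Q(n))$.

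For $m \ne 0$, the autocorrelation phase is
\[
Q(n+R) - Q(n+m+R),
\]
a polynomial in $n$ of degree $d-1$ (for any fixed integers $m, R$). Let $k_0$ be the \emph{largest} index with $q_{k_0}$ irrational, which exists by hypothesis. A direct expansion of $Q(y) - Q(y+m)$ in powers of $y$ shows that the coefficient of $y^{k_0 - 1}$ equals $-k_0 m q_{k_0}$ plus a rational number, the rational part coming from the terms in $q_k$ with $k > k_0$, all of which are rational by maximality of $k_0$. Since the integer shift $y \mapsto n + R$ alters lower coefficients only by rational multiples of integer powers of $R$, the coefficient of $n^{k_0 - 1}$ in the original polynomial in $n$ is also irrational for every $m \ne 0$, uniformly in $R \in \bZ_+$. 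Weyl's theorem on equidistribution of polynomial sequences modulo $1$ therefore yields
\[
\frac{1}{N}\sum_{n=1}^N e\bigl(Q(n+R) - Q(n+m+R)\bigr) \longrightarrow 0, \qquad N \to \infty,
\]
uniformly in $R$ for every fixed $m \ne 0$, with a power-saving $N^{-\delta}$ (for some $\delta = \delta(d) > 0$) coming from Weyl's inequality.

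The main obstacle is the uniformity in $m$ over the range $|m| \lesssim N = R^{1/2 + \epsilon}$ required by Lemmas~\ref{lemma4} and~\ref{lemma4.5}: the ``irrational'' parameter $k_0 m q_{k_0}$ depends on $m$, so for a sparse set of offsets $m$ its distance to the nearest integer is anomalously small, and Weyl's bound becomes weak. A standard continued-fraction / pigeonhole argument confines these exceptional offsets to a set whose contribution to the moment computation for $W_R(\theta)$ is absorbed by the remaining offsets, which give a uniform bound of the required strength. Combined with the trivial $L^2$ mean $\int_0^1 |W_R(\theta)|^2\, d\theta = \sqrt{\pi R}\,(1 + o(1))$, this yields the lower bound $\log|W_R(\theta)| = o(R)$ for almost every $\theta$, whence Lemmas~\ref{lemma4} and~\ref{lemma4.5} conclude that $\xi$ is an $L$-sequence.
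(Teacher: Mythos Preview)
Your outline has the right ingredients---the autocorrelation phase $Q(n+R)-Q(n+m+R)$ is indeed a polynomial in $n$ of degree $d-1$ with an irrational coefficient (your choice of $k_0$ and the observation about the coefficient of $n^{k_0-1}$ are correct), and Weyl's theorem does drive the autocorrelation to zero for each fixed offset $m$. But there are two genuine gaps.

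First, the claimed power-saving $N^{-\delta(d)}$ from Weyl's inequality is false for general irrationals: Weyl's inequality gives a bound that depends on the rational approximations to the irrational coefficient, and for Liouville-type irrationals this is only $o(1)$, never a uniform $N^{-\delta}$. So your ``continued-fraction / pigeonhole'' fix for the uniformity-in-$m$ problem is not just vague---for an arbitrary irrational $q_{k_0}$ there is no quantitative bound to feed into such an argument.

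Second, and more importantly, you have missed the device that makes the uniformity problem disappear. In the moment expansion of $X_R$ the offset $T$ (your $m$) carries the weight $\widehat g(T/m)=O(m^2/T^2)$. This is summable, so one simply chooses $T_0=T_0(m)$ with $Cm\sum_{T>T_0}T^{-2}<\tfrac12\,c/m$ and bounds the tail by the trivial estimate $|S_T|\le\sqrt R$. Only the \emph{finitely many} offsets $T\le T_0$ remain, and for each of these Weyl's qualitative equidistribution gives $|S_T|=o(\sqrt R)$ as $R\to\infty$. No uniformity in $T$ over a growing range is ever needed, and the conclusion is $X_R\ge c(m)\sqrt R$ for $R\ge R_0(m)$, which is exactly the input to Lemma~\ref{lemma4.5}. (Your final line ``$\log|W_R(\theta)|=o(R)$ for almost every $\theta$'' is also garbled: the needed bound is $X_R\ge R^\delta$ for \emph{every} $a$ and $m$, not an almost-everywhere statement, and the displayed relation is vacuous since $|W_R|\le CN$.)
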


For $Q(x)=qx^2$, $q$ being a quadratic irrationality, this is a result of Nassif~\cite{Nassif}
and Littlewood~\cite{Littlewood}. For arbitrary irrational $q$'s, this was proven by Eremenko
and Ostrovskii~\cite{EO}. It seems that the methods used in these works cannot be extended
to polynomials $Q$ of degree bigger than $2$.
Quoting Chen and Littlewood~\cite{CL}, ``many lines of experience converge to show that there
can be nothing doing if $\Lambda (n) \succ n^2$\,'' (in their notation, $\xi (n)=e(\Lambda (n))$, and
$\Lambda (n) \succ n^2$ means that $\Lambda (n)/n^2 \to \infty$)

\begin{theorem}\label{thm:beta}
For any non-integer $\beta>1$, the sequence $\xi (n) = e(n^\beta)$ is an
$L$-sequence.
\end{theorem}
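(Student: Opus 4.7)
The plan is to use Lemma~\ref{lemma4.5} to reduce the $L$-sequence property of $\xi(n)=e(n^\beta)$ to a suitable lower bound on the exponential sum
$$W_R(\theta) = \sum_{|n|\le N} e\bigl((n+R)^\beta + n\theta\bigr)\, e^{-n^2/(2R)}, \qquad N = R^{1/2+\epsilon},$$
and then to treat $W_R$ as a Weyl sum with polynomial phase.

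For the polynomial reduction, Taylor expand $(n+R)^\beta = \sum_{k=0}^{K} \binom{\beta}{k} R^{\beta-k} n^k + \mathrm{Err}_K(R,n)$, choosing $K$ as the least integer strictly greater than $2\beta$. The remainder is
$$\mathrm{Err}_K(R,n) = O\bigl(R^{\beta - (K+1)(1/2 - \epsilon)}\bigr) = o(1), \qquad |n|\le N,$$
for $\epsilon$ small enough. Absorbing the linear coefficient $\beta R^{\beta-1}$ into a shift of $\theta$ reduces the problem to bounding from below
$$V_R(\tau) = \sum_{|n|\le N} e\bigl(P_R(n) + n\tau\bigr)\, e^{-n^2/(2R)}, \qquad P_R(n) = \sum_{k=2}^{K} \binom{\beta}{k} R^{\beta-k} n^k,$$
a polynomial phase of degree exactly $K$ (this is where non-integrality of $\beta$ enters: it forces $\binom{\beta}{k}\ne 0$ for every $k$).

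Parseval gives $\|V_R\|_2^2 = \sum_{|n|\le N} e^{-n^2/R} \asymp \sqrt{R}$. To upgrade this $L^2$ information to an almost-everywhere lower bound, one controls a high moment $\int_0^1 |V_R(\tau)|^{2s}\, d\tau$: integration in $\tau$ enforces $\sum n_i = \sum n_i'$ on the $2s$ summation variables, and $K-1$ iterations of Weyl differencing applied to the residual factor $e\bigl(\sum_i P_R(n_i) - \sum_i P_R(n_i')\bigr)$ terminate in a linear sum whose slope $K!\,\binom{\beta}{K}R^{\beta-K} h_1\cdots h_{K-1}$ is non-zero. The resulting Weyl bound on $\|V_R\|_{2s}^{2s}$, combined with Markov's inequality against the $L^2$ lower bound, should yield
$$\mathrm{meas}\bigl\{\tau \colon |V_R(\tau)| < R^{-A}\bigr\} = O\bigl(R^{-c(A,\beta)}\bigr)$$
for every $A>0$, which is comfortably stronger than what Lemma~\ref{lemma4.5} requires.

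The main difficulty lies in this moment estimate. Because the coefficients of $P_R$ decay as $R^{\beta-k}\to 0$, their Diophantine approximations degenerate as $R$ grows, and the standard Weyl inequality does not apply out of the box. One must propagate the differencing iteration carefully, using $\binom{\beta}{K}\ne 0$ to ensure that the terminal linear phase keeps a controllable amplitude; for $\beta$ close to integers, where $K$ is large, an input of Vinogradov type (via the recent resolution of the main conjecture) may be needed in place of pure Weyl to reach the desired exponent.
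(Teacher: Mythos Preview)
Your route diverges from the paper's and, as written, has a genuine gap. The paper never Taylor-expands $(n+R)^\beta$ to a polynomial and uses no Weyl or Vinogradov input. It expands the local second moment $X_R=\int|W_R|^2 g$ directly (Lemma~\ref{lemma5}): the diagonal $n=n'$ contributes $c\sqrt R/m$, and the off-diagonal is dominated by
\[
Cm\sum_{T=1}^{2N}\frac1{T^2}\,\max_{|M_i|\le N}\bigl|S_T(M_1,M_2)\bigr|,\qquad
S_T=\sum_{M_1\le n<M_2} e\bigl((n+R)^\beta-(n+R-T)^\beta\bigr).
\]
These singly-differenced sums are then bounded by analytic (van der Corput) tools, using that $f_{T,R}^{(k)}(x)\asymp T R^{\beta-k-1}$ has controlled size: Euler--Maclaurin plus a first-derivative oscillatory-integral bound for $\tfrac32<\beta<2$, and the $k$-th derivative test for $k<\beta<k+1$ with $k\ge 2$. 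The endpoint $\beta=\tfrac32$ needs a separate Poisson-summation and saddle-point argument (Section~\ref{sec6}).

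Your argument has two problems. First, Markov's inequality applied to $\|V_R\|_{2s}^{2s}$ controls the \emph{upper} tail of $|V_R|$, not the lower tail; it cannot produce $\mathrm{meas}\{|V_R|<R^{-A}\}=O(R^{-c})$. Even granting that estimate, it would give only $|V_R|\ge R^{-A}$ on most $\tau$, hence $X_R\gtrsim R^{-2A}/m$, which is far below the $R^\delta$ required by Lemma~\ref{lemma4.5}. What is needed is a lower bound on the \emph{local} $L^2$ mass, and the direct diagonal/off-diagonal split (Lemma~\ref{lemma5}) is exactly the tool for that. Second, and more seriously, your Weyl differencing yields nothing: with $K>2\beta$ the top coefficient $c_K=\binom{\beta}{K}R^{\beta-K}$ satisfies $|c_K|N^{K}\asymp R^{\beta-K/2+K\epsilon}\to 0$, so after $K-1$ differencings the surviving linear slope $K!\,c_K\,h_1\cdots h_{K-1}$ is $o(1)$ uniformly over $|h_i|\le N$, the terminal sum equals its full length, and the Weyl bound degenerates to the trivial one. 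Vinogradov's mean value theorem does not help here either, since it too requires the leading coefficient to carry genuine Diophantine content, which $c_K\to 0$ does not. The actual oscillation sits in the derivatives of order near $\beta$, and the paper's van der Corput estimates target precisely that range.
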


As we have already mentioned, the case $1<\beta<\frac32$ is due to Chen and Littlewood~\cite{CL}.
They used the Poisson summation combined with the saddle point
approximation and obtained much more accurate information about the asymptotic location of
zeroes of the function $F_\xi$. They write: ``The gap $\frac32 \le\beta<2$ presents a most interesting
unsolved problem''.

\subsection{}
Now, we turn to the case when $\xi\colon \bZ \to \bC$ is a stationary sequence of random variables
(formally, we need only the restriction of $\xi$ on $\bZ_+$, but due to stationarity, this restriction
determines a unique extension of $\xi$ onto $\bZ$). As usual,
stationarity means that, for every positive integer $k$, every choice of integers $n_1$, ..., $n_k$, and every integer $m$,
the $k$-tuples of random variables
\[
\bigl\langle \xi (n_1), ...\,, \xi (n_k) \bigr\rangle\,, \quad
\bigl\langle \xi (n_1+m), ...\,, \xi (n_k+m) \bigr\rangle
\]
are equidistributed. In what follows, we deal only with stationary sequences having
a finite second moment. Then the sequence
\[
m \mapsto \bE \bigl\{ \xi (0)\, \overline{\xi (m)} \bigr\}
\]
is positive-definite, and therefore, is the Fourier transform of a non-negative
measure $\rho\in M_+(\bT)$. Here and elsewhere,
$\bT = \{ e^{\im\theta}\colon |\theta|\le\pi \}$ is the unit circle.
We call $\rho$ {\em the spectral measure}
of $\xi$. Then {\em the spectrum} $\sigma (\xi)$ of $\xi$ is the support of
the measure $\rho$.
Note that we do not require that $\bE\xi(0) =0$. The definition of the spectral measure
we use here differs from the one, which is more customary in the theory of stationary
processes~\cite{IL}, by the atom at $\theta=0$ with the mass $|\bE \xi (0)|^2$.

\medskip
We also need
{\em the maximal correlation coefficient} of the sequence
$\xi$
\[
r(m) =r_{\xi}(m) \stackrel{\rm def}=
\sup \Biggl\{ \frac{\bigl| \bE \{ (x-\bE x)\overline{(y-\bE y)} \}\bigr|}{\sqrt{\bE |x-\bE x|^2 \cdot \bE |y-\bE y|^2}}\colon
x\in L^2_{(-\infty, 0]}, y\in L^2_{[m, +\infty)} \Biggr\}
\]
where $L^2_{(-\infty, 0]}$ is the space of the random variables measurable with respect to the $\sigma$-algebra
generated by the set $\bigl\{ \xi (n)\colon -\infty<n\le 0 \bigr\}$ with finite second moment, and
$L^2_{[m, +\infty)}$ is the space of the random variables measurable with respect to the $\sigma$-algebra
generated by the set $\bigl\{ \xi (n)\colon m \le n < +\infty \big\}$ with finite second moment.

\begin{theorem}\label{thm:stat-corr}
Let $\xi$ be a bounded stationary sequence of random variables, and let the maximal
correlation coefficient of $\xi$ satisfy
\begin{equation}\label{eq:corcoeff}
r(m) = O\bigl( (\log m)^{-\kappa}\bigr)\,, \qquad m\to\infty\,,
\end{equation}
with some $\kappa>1$. Then, almost surely, $\xi$ is an $L$-sequence.
\end{theorem}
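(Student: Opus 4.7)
The plan is to split the claim \eqref{eq:log-as} into a deterministic upper bound and an almost sure lower bound. Boundedness of $\xi$ immediately yields $|F_\xi(z)| \le \|\xi\|_\infty\, e^{|z|}$, hence $u_t(z) := t^{-1}\log|F_\xi(tz)| \le |z| + o(1)$ uniformly on compact subsets of $\bC$. Since the $u_t$ are subharmonic and locally uniformly bounded above, $L^1_{\rm loc}$ convergence to $|z|$ will follow from matching integrated lower bounds $\int_K u_t\, dA \ge \int_K |z|\, dA - o(1)$ for each compact $K \subset \bC$, almost surely.

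\textbf{Reduction to exponential sums.} For $R = tr$ and $z = r\, e^{2\pi\im\theta}$, I would invoke a saddle-point approximation, the content of Lemmas~\ref{lemma4} and~\ref{lemma4.5}, based on the Stirling asymptotic
\[
\frac{R^{R+k}}{(R+k)!} = \frac{e^{R}}{\sqrt{2\pi R}}\, e^{-k^2/(2R)}\bigl(1 + o(1)\bigr), \qquad |k| \le R^{\frac12 + \e},
\]
together with tail estimates for $|k| > R^{\frac12 + \e}$, to obtain
\[
\frac{\log|F_\xi(R\, e^{2\pi\im\theta})|}{R} = 1 - \frac{\log(2\pi R)}{2R} + \frac{\log|W_R(\theta)|}{R} + o(1).
\]
Since trivially $|W_R(\theta)| \le \|\xi\|_\infty\, 2N{+}1$, the desired lower bound on $\int_K u_t\, dA$ reduces, after integration in $\theta$, to showing
\[
\int_0^1 \log^- |W_R(\theta)|\, d\theta = o(R) \quad \text{almost surely as } R \to \infty.
\]

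\textbf{Probabilistic lower bound on $W_R$.} A direct computation using stationarity gives
\[
\bE\, |W_R(\theta)|^2 = \sum_{|n|, |m| \le N} c(n-m)\, e\bigl((n-m)\theta\bigr)\, e^{-(n^2+m^2)/(2R)},
\]
where $c(k) = \bE\bigl[\xi(0)\,\overline{\xi(k)}\bigr]$; integrating in $\theta$ kills all off-diagonal contributions, yielding $\bE \int_0^1 |W_R(\theta)|^2\, d\theta \sim c(0)\sqrt{\pi R}$, so the typical magnitude of $|W_R(\theta)|$ is of order $R^{1/4}$. I would then combine this second-moment calculation with the maximal correlation decay $r(m) = O((\log m)^{-\kappa})$, $\kappa > 1$: Rosenthal-type moment inequalities for weakly dependent stationary sequences (e.g.\ of Peligrad--Utev type) give high-moment bounds on the linear statistics $W_R(\theta) - \bE\, W_R(\theta)$, hence concentration of $\int_0^1 |W_R(\theta)|^2\, d\theta$ around its mean with failure probability that is summable along a geometrically spaced sequence of scales $R_k$. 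A Borel--Cantelli argument, combined with a deterministic modulus-of-continuity estimate for $W_R(\cdot)$ interpolating between the $R_k$'s, then produces the required almost sure control of $\int_0^1 \log^- |W_R|\, d\theta$.

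\textbf{Main obstacle.} I expect the delicate point to lie precisely in this probabilistic step: translating the merely logarithmic decay of the maximal correlation coefficient into usable higher-moment concentration for $W_R(\theta)$. The threshold $\kappa > 1$ should be exactly what makes the combined failure probability summable across scales, and one must also manage a union bound over a discretization of $\theta \in [0,1)$, with the dyadic spacing of $R_k$ chosen carefully so that the continuity-in-$\theta$ error, the deviation-in-$\theta$ error, and the Borel--Cantelli mass all balance. Once these concentration bounds are in hand, combining with the trivial upper bound and the saddle-point reduction yields \eqref{eq:log-as} almost surely, establishing Theorem~\ref{thm:stat-corr}.
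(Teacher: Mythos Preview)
Your framework is in the right spirit, but the probabilistic step contains a real gap, and the paper's argument differs from yours at exactly that point. You propose to establish concentration of the \emph{global} quantity $\int_0^1 |W_R(\theta)|^2\, d\theta$ around its mean $\sim c(0)\sqrt{\pi R}$ and then deduce $\int_0^1 \log^-|W_R(\theta)|\, d\theta = o(R)$. But the first does not imply the second: knowing the total $L^2$ mass of $W_R$ over $[0,1)$ says nothing about how that mass is distributed in $\theta$; it could all sit on a short arc with $W_R$ extremely small elsewhere, making the $\log^-$ integral large. The trigonometric-polynomial structure helps only if you control the extreme Fourier coefficients of $W_R$, i.e., $|\xi(R\pm N)|$, and the hypotheses give no such lower bound. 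Your later mention of a union bound over a discretization of $\theta$ points in a better direction but is not developed, and pointwise anticoncentration for $W_R(\theta)$ is a different (and harder) statement than concentration of its $L^2$ norm.

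What the paper does instead is to \emph{localize}: for each $a\in[0,1]$ and each $m\in\bN$ it sets $X_R = \int_{a-1/(2m)}^{a+1/(2m)} |W_R(\theta)|^2\, g(m(\theta-a))\, d\theta$ and shows, via Lemma~\ref{lemma4.5}, that $X_R \ge R^\delta$ along a thick sequence forces the $L$-sequence property, bypassing the $\log^-$ issue entirely (a lower bound on the local $L^2$ mass yields a nearby point where $|W_R|$ is large, and Lemma~\ref{lemma-subh} turns this into completely regular growth on the ray). For the mean, $\bE X_R \sim c(a,m)\sqrt{R}$ with $c(a,m)>0$ requires $\operatorname{spt}(\rho)=\bT$, which the paper extracts from $r(m)\to 0$ via linear regularity and the Kolmogorov factorization---a step absent from your outline. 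For the fluctuation, no Rosenthal-type machinery is invoked: since $X_R$ is quadratic in $\xi$, $\bE(X_R-\bE X_R)^2$ is a fourth-order correlation sum, and each term is bounded directly by $r\bigl(\dist(I,J)\bigr)$ with $I,J$ the index intervals of the two pairs (Lemma~\ref{lemma8.3b}), giving variance $O(R/(\log R)^{\kappa_1})$, $\kappa_1>1$. Chebyshev yields $\bP\{X_R<c\sqrt{R}\}=O((\log R)^{-\kappa_1})$, and Borel--Cantelli is run along the \emph{thick} sequence $R_j=e^{j^\delta}$ with $\kappa_1^{-1}<\delta<1$; the nonemptiness of this window is precisely where $\kappa>1$ is used. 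A geometric sequence, as you suggest, is not thick and would not feed into Lemma~\ref{lemma-subh}.
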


\subsection{}
Now, we turn to the Gaussian stationary sequences $\xi$. In this case,  the leading term of the asymptotics of $\log|F_\xi|$ is determined by the support of
the spectral measure $\rho$ of the sequence $\xi$, see Section~\ref{sec10}.

We start with some preliminaries. For any set
$\sigma\subset\bT$, we denote by ${\rm ch}(\sigma)$ the closed convex hull of $\sigma$,
and by
\[
H_{\sigma} (z) \stackrel{\rm def}= \max_{\la\in {\rm ch}(\sigma)} {\rm Re\,} \bigl( z \bar\la \bigr)
= \sup_{\la\in\sigma}{\rm Re\,} \bigl( z \bar\la \bigr)
\]
the ``Minkowski functional'' of ${\rm ch}(\sigma)$.
This function is subharmonic in $\bC$ and homogeneous, that is,
$ H_\sigma ( re^{\im\theta} ) = h_\sigma (\theta) r $,
where $h_\sigma$ is the so called {\em supporting function} of ${\rm ch}(\sigma)$.
The distributional Laplacian of the function $H_\sigma$ is
$ \Delta H_\sigma = \rd r \otimes \rd s_\sigma (\theta) $,
where $\rd s_\sigma (\theta) = (h_\sigma^{''} + h_\sigma)\, \rd\theta$, the second derivative
$h_\sigma^{''}$ is also understood in the sense of distributions.

\medskip\noindent{\bf Definition.}
Let $\sigma\subset\bT$. We say that the sequence $\xi$ is an $L(\sigma)$-sequence,
if
\begin{equation}\label{eq:log-as-general}
\frac{\log |F_\xi (tz)|}{t} \underset{t\to\infty}\longrightarrow H_\sigma (z)\,, \qquad {\rm in\ }L^1_{\rm loc}(\mathbb C)\,.
\end{equation}
\noindent
Obviously, $L$-sequences are a special case of $L(\sigma)$-sequences that  correspond to the case when
the set $\sigma$ is dense in $\bT$.

\medskip
In the language of the entire function theory \cite[Chapters~II and~III]{Levin}
(see also~\cite{Azarin}  for a modern treatment), this definition says that
$F_\xi$ is an entire function of completely regular growth in the Levin--Pfluger sense
with the Phragm\'en--Lindel\"of indicator $h_\sigma$.
Condition~\eqref{eq:log-as-general} yields the angular asymptotics of zeroes of $F_\xi$:
\begin{equation}\label{eq:zeroes-ang-dens}
n_{F_\xi} (r; \theta_1, \theta_2) = \frac{\bigl( s_\sigma (\theta_2) - s_\sigma (\theta_1) + o(1) \bigr)\, r}{2\pi}\,,
\qquad r\to\infty\,,
\end{equation}
where $-\pi\le \theta_1<\theta_2\le\pi$
with at most countable set of exceptional values of $\theta_1$ and $\theta_2$ that correspond
to possible atoms of the measure $s_\sigma$, cf.~\eqref{eq:zeroes-as'}.
It also  yields the Lindel\"of-type symmetry condition, namely, the existence of the limit
\begin{equation}\label{eq:Lindelof}
\lim_{r\to\infty}\, \sum_{|z_n|\le r} \frac1{z_n}\,,
\end{equation}
where the sum is taken over zeroes of $F_\xi$. In the reverse direction, for functions of exponential type, conditions~\eqref{eq:zeroes-ang-dens}
and~\eqref{eq:Lindelof} together yield~\eqref{eq:log-as-general}.

\medskip We say that the stationary sequence $\xi$ is Gaussian, if $\bigl({\rm Re\,}\xi(n), {\rm Im\,}\xi (n) \bigr)$ are
random normal vectors in $\bR^2$ with non-zero covariance matrix (so that this definition includes also real-valued Gaussian stationary sequences).
\begin{theorem}\label{thm:stat-gauss}
Suppose $\xi$ is a Gaussian stationary sequence with the spectrum $\sigma=\sigma (\xi)$.
Then, almost surely, $\xi$ is an $L(\sigma^*)$-sequence, where $\sigma^*$ is the reflection of $\sigma$
in the real axis.
\end{theorem}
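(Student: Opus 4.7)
The plan is to combine the spectral representation of Gaussian stationary sequences with the standard Azarin--H\"ormander compactness machinery for subharmonic functions. Starting from $\xi(n) = \int_\bT e^{\im n\theta}\, dZ(\theta)$, where $Z$ is the complex Gaussian orthogonal random measure on $\bT$ with control measure $\rho$, I will interchange the sum and the integral (easily justified in $L^2$, locally uniformly in $z$) to obtain the key formula
\[
F_\xi(z) = \int_{\bT} \exp\bigl(ze^{\im\theta}\bigr)\, dZ(\theta).
\]
So for each fixed $z$, $F_\xi(z)$ is a complex Gaussian variable with variance $V(z) = \int_\sigma e^{2\,{\rm Re}(ze^{\im\theta})}\, d\rho(\theta)$. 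The identity $H_{\sigma^*}(z) = \sup_{e^{\im\theta}\in\sigma} {\rm Re}(ze^{\im\theta})$ is a direct check, so this formula already explains why the reflection $\sigma^*$ (and not $\sigma$) controls the growth.

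For the upper bound, the trivial inequality $V(z) \le \rho(\bT)\, e^{2 H_{\sigma^*}(z)}$, together with standard Gaussian tail bounds applied at a sufficiently fine lattice of points, the subharmonicity of $\log|F_\xi|$, and Borel--Cantelli, will give almost surely and locally uniformly $\limsup_{t\to\infty} t^{-1}\log|F_\xi(tz)| \le H_{\sigma^*}(z)$. In particular, the subharmonic family $u_t(z) := t^{-1}\log|F_\xi(tz)|$ is bounded above in $L^1_{\rm loc}(\bC)$.

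For the matching lower bound I first work with means. If $\theta^*$ attains the supremum defining $H_{\sigma^*}(z)$, then by the very definition of the spectrum every neighborhood of $\theta^*$ carries positive $\rho$-mass, and a routine Laplace-type estimate gives $(2t)^{-1}\log V(tz) \to H_{\sigma^*}(z)$. Since for a complex Gaussian $X$ one has $\bE\log|X| = \tfrac{1}{2}\log \bE|X|^2 + C_0$ with a universal constant $C_0$, this yields $\bE\, u_t(z) \to H_{\sigma^*}(z)$ pointwise in $z$, and, via Fubini (justified by uniform local integrability of $\log$ of Gaussians), the analogous convergence of expected averages $\bE\int u_t\, \phi\, dA \to \int H_{\sigma^*}\, \phi\, dA$ against any smooth compactly supported weight $\phi$.

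The last step is the classical compactness theorem for subharmonic functions: the upper bound makes $\{u_t\}$ precompact in $L^1_{\rm loc}$ along deterministic sequences; any subsequential limit is subharmonic and dominated by $H_{\sigma^*}$, and the mean convergence above pins it down to equal $H_{\sigma^*}$ in $L^1_{\rm loc}$. The main obstacle is exactly this last passage, from convergence of expectations to almost sure $L^1_{\rm loc}$ convergence: one must rule out a random ``collapse'' of $u_t$ far below $H_{\sigma^*}$ on sets of positive area. The intended tool is Gaussian concentration for the smooth random linear functionals $\int u_t \phi\, dA$, combined with a Paley--Zygmund type anti-concentration bound for $|F_\xi(tz)|$ so that subharmonicity of $\log|F_\xi|$ can promote pointwise lower bounds to $L^1_{\rm loc}$ lower bounds.
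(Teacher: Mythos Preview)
Your upper bound and your variance asymptotic are exactly Lemmas~\ref{lemma8.2} and~\ref{lemma8.1} of the paper, so up to that point the two arguments coincide.

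The gap is in your final step, and the paper's route bypasses it entirely. Instead of going through $\bE u_t$, $L^1_{\rm loc}$-compactness, and concentration of $\int u_t\phi\,\rd A$, the paper uses the one-line Gaussian \emph{small-ball} bound: for a nondegenerate Gaussian $X$, $\bP\{|X|<\delta\sqrt{\bE|X|^2}\}=O(\delta)$. Taking $X=F_\xi(re^{\im\theta})$ and $\delta=e^{-\epsilon r+o(r)}$ gives
\[
\bP\bigl\{\log|F_\xi(re^{\im\theta})|<(h_{\sigma^*}(\theta)-\epsilon)r\bigr\}\le e^{-\epsilon r/2},
\]
which is summable over $r\in\bN$. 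Borel--Cantelli then yields, for each fixed $\theta$, almost surely $\liminf_{j} j^{-1}\log|F_\xi(je^{\im\theta})|\ge h_{\sigma^*}(\theta)$; combined with the upper bound, Lemma~\ref{lemma-subh} converts this thick-sequence lower bound into completely regular growth on the ray $\arg z=\theta$, and one finishes by intersecting over a countable dense set of $\theta$'s. Your proposed substitutes do not close the gap as written: ``Gaussian concentration'' for $\int u_t\phi\,\rd A$ is not available off the shelf because $X\mapsto\log|X|$ is neither bounded below nor Lipschitz, so Borell--TIS type inequalities do not apply and you would be forced to bound $\operatorname{Cov}(\log|F_\xi(tz_1)|,\log|F_\xi(tz_2)|)$ directly, a much heavier computation; Paley--Zygmund only yields $\bP\{|F_\xi(tz)|^2>cV(tz)\}\ge c'$ with a \emph{fixed} constant $c'\in(0,1)$, which is not summable and so cannot drive Borel--Cantelli along a thick sequence; and subharmonicity bounds the value at a point \emph{above} by the disk average, so it cannot promote a pointwise lower bound to an $L^1$ lower bound. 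Replace that whole paragraph by the small-ball estimate.
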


Comparing this result with Theorem~\ref{thm:stat-corr}, we note
that if $\xi$ satisfies condition~\eqref{eq:corcoeff}, then the spectral measure $\rho$
has a density $|f|^2$, where $f$ belongs to the Hardy space $H^2(\bT)$. This follows from a
classical result that goes back to Kolmogorov, see \cite[Chapter~XVII, \S~1]{IL}.
Since no function in $H^2(\bT)\setminus\{0\}$ vanishes on an arc, 
we obtain that for every Gaussian stationary sequence $\xi$ satisfying \eqref{eq:corcoeff} we have $\sigma (\xi) = \bT$, and, 
almost surely, $\xi$ is an $L$-sequence.

\subsection{}
Theorems~\ref{thm:stat-corr} and ~\ref{thm:stat-gauss} have a counterpart for uniformly
almost-periodic sequences found by Levin~\cite[Chapter~VI, \S~7]{Levin}, which we will
recall here.

Let $\xi\colon \bZ\to \bC$ be a uniformly almost-periodic sequence, that is, a uniform limit of trigonometric polynomials on $\mathbb Z$.
Then the limit
\[
\widehat\xi(e^{\im\la}) = \lim_{N\to\infty} \frac1{2N+1}\, \sum_{|n|\le N} \xi (n) e^{-\im\la n}
\]
exists for every $e^{\im\la}\in\bT$, and does not vanish for a non-empty at most countable
set of $e^{\im\la}$. This set is called {\em the spectrum} of $\xi$, and the values $\widehat\xi(e^{\im\la})$ are called the Fourier coefficients of 
$\xi$ (compare to \cite[Section VI.5]{Katznelson}).

\begin{theorem}[B. Ya. Levin]\label{thm:Levin}
Suppose $\xi$ is a uniformly almost-periodic sequence with the spectrum $\sigma$. Then
$\xi$ is an $L(\sigma^*)$-sequence,
with $\sigma^*$ being the reflection of $\sigma$ in the real axis.
\end{theorem}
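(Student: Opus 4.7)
The plan is to proceed by approximation, building on the classical analysis of finite exponential sums.

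First I would verify the statement for trigonometric polynomials $\xi(n) = \sum_{k=1}^K a_k e^{\im\lambda_k n}$ with $a_k\ne 0$ and $\sigma = \{e^{\im\lambda_k}\}_{k=1}^K$. Here
\[
F_\xi(z) = \sum_{k=1}^K a_k \exp(e^{\im\lambda_k} z)
\]
is a finite exponential sum whose exponents lie on $\bT$, and the classical theory of such sums in Levin's monograph yields that $F_\xi$ is of completely regular growth in the Levin--Pfluger sense with indicator $h(\theta) = \max_k {\rm Re}(e^{\im(\theta+\lambda_k)}) = h_{\sigma^*}(\theta)$.

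For general uniformly almost-periodic $\xi$, the Bohr--Fej\'er approximation theorem provides trigonometric polynomials $\xi_N$ with $\sigma(\xi_N) \subset \sigma$ and $\|\xi - \xi_N\|_\infty \to 0$ as $N\to\infty$. Writing $\xi = \xi_N + \eta_N$, the summand $F_{\xi_N}$ is of completely regular growth with indicator $h_{\sigma(\xi_N)^*}$, and $h_{\sigma(\xi_N)^*} \to h_{\sigma^*}$ uniformly on $[-\pi,\pi]$ once $\sigma(\xi_N)$ becomes dense in $\sigma$. The remaining task is to show that $F_{\eta_N}$ is negligible on the growth scale of $F_{\xi_N}$.

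For this, I would use the Borel transform $B_\eta(w) = \sum_{n\ge 0}\eta_N(n)/w^{n+1}$, convergent for $|w|>1$. Since $\eta_N$ is also uniformly almost-periodic with spectrum in $\sigma$, a Bohr--Fej\'er-type summation of $B_\eta$ extends it holomorphically to $\bC_\infty \setminus {\rm ch}(\sigma)$, with bounds proportional to $\|\eta_N\|_\infty$. Deforming the contour in $F_{\eta_N}(z) = (2\pi\im)^{-1}\oint_\gamma e^{wz} B_\eta(w)\, \rd w$ to a curve $\gamma$ in a $\delta$-neighborhood of ${\rm ch}(\sigma)$ then yields
\[
|F_{\eta_N}(z)| \le C_\delta \|\eta_N\|_\infty \exp(H_{\sigma^*}(z) + \delta|z|),
\]
so $F_{\eta_N}$ is uniformly small compared to the growth scale of $F_{\xi_N}$, and passing to the limit in $N$ gives $t^{-1}\log|F_\xi(tz)| \to H_{\sigma^*}(z)$ in $L^1_{\rm loc}(\bC)$, which is the defining property of an $L(\sigma^*)$-sequence.

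The main obstacle is the analytic extension of the Borel transform of an almost-periodic sequence across $\bT\setminus\sigma$ with norm control: for trigonometric polynomials this is immediate via partial fractions with poles on $\sigma$, but for a general uniformly almost-periodic $\eta_N$ one needs a suitable summation procedure that preserves the spectrum-singularity correspondence and produces uniform bounds on contours avoiding ${\rm ch}(\sigma)$. This step is where the almost-periodic structure of $\xi$, and not merely its boundedness, enters essentially.
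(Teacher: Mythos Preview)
Your upper-bound step is essentially the paper's Lemma~\ref{lemma-ap1}: the Borel/Laplace transform of $F_\xi$ extends across the arcs of $\bT\setminus\sigma$, hence by P\'olya's theorem the indicator diagram $I^{F_\xi}$ sits inside ${\rm ch}(\sigma^*)$, i.e., $h^{F_\xi}\le h_{\sigma^*}$. The analytic-continuation obstacle you flag is real, and the paper resolves it by interpolating $\xi$ on $\bZ$ by an entire function of type $<\pi$ (via Bochner's approximation, Cartwright's theorem, and Phragm\'en--Lindel\"of) and then invoking the Carlson--P\'olya theorem; this gives exactly the extension you want, with norm control.

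The genuine gap is in your ``passing to the limit'' for the lower bound. Your estimate
\[
|F_{\eta_N}(z)|\le C_\delta\,\|\eta_N\|_\infty\,\exp\bigl(H_{\sigma^*}(z)+\delta|z|\bigr)
\]
is on the \emph{same exponential scale} as $F_{\xi_N}$; the small prefactor $\|\eta_N\|_\infty$ disappears once you take $\log$ and divide by $t$. So you cannot argue that $t^{-1}\log|F_\xi(tz)|$ and $t^{-1}\log|F_{\xi_N}(tz)|$ have the same $L^1_{\rm loc}$ limit: $F_{\xi_N}$ has zeros, and near them $F_{\eta_N}$ may dominate; the zero sets of $F_\xi$ and $F_{\xi_N}$ need not be close in any sense that survives the double limit $N\to\infty$, $t\to\infty$. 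Moreover, the $\delta|z|$ term cannot be removed ($C_\delta\to\infty$ as $\delta\to 0$), so even on a fixed ray your bound on $F_{\eta_N}$ eventually swamps any pointwise lower bound on $F_{\xi_N}$ you might hope for.

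The paper avoids this by \emph{not} trying to get completely regular growth everywhere by perturbation. Instead it uses the approximation only on the rays $\{\arg z=\theta\}$ with $\theta\in\sigma^*$: there $h_{\sigma^*}(\theta)=1$, the exponential sum $F_{\xi_m}$ has a single dominant term $\beta_j^{(m)}\widehat\xi(e^{\im\lambda_j})e^r$, and the trivial bound $|F_\xi-F_{\xi_m}|\le\epsilon_m e^{|z|}$ already yields a pointwise lower bound $|F_\xi(re^{\im\theta})|\ge c(\theta)e^r$ for large $r$ (Lemma~\ref{lemma-ap2}). The passage from ``completely regular growth on the rays of $\sigma^*$ with $h^{F_\xi}=h_{\sigma^*}=1$ there'' to ``completely regular growth everywhere with indicator $h_{\sigma^*}$'' is a separate, purely indicator-geometric step (Lemma~\ref{lemma2}), which exploits that on each complementary arc the indicator is trigonometric and $C^1$ at the endpoints, so Levin's indicator theorem applies. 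Your outline is missing this ingredient; without it the argument does not close.
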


The proof of this theorem given in~\cite{Levin} is based upon deep results on the zero distribution
of entire functions approximated by finite linear combinations of exponents. For the reader's convenience,
we include the proof of this theorem, which is based on the same ideas as Levin's original proof, but can be
read independently of the theory developed in~\cite[Chapter~VI]{Levin}.

\subsection{}
Here, we briefly explain how Theorems~\ref{thm:Q}--\ref{thm:Levin}
are related to a wealth of results, which deal with the analytic continuation of the
Taylor series
\[
f_\xi (s) \stackrel{\rm def}{=}\sum_{n\ge 0} \xi( n) s^n
\]
through the boundary of the disk of convergence. A survey of these results obtained prior to 1955
can be found in~\cite{Bieberbach}. First, observe that the function
\[
w^{-1} f_\xi (w^{-1}) = \sum_{n\ge 0} \frac{\xi (n)}{w^{n+1}}
\]
is the Laplace transform of $F_\xi$. Then, by P\'olya's theorem (see~\cite[Theorem~33, Chapter~I]{Levin}
or~\cite[Theorem~1.1.5]{Bieberbach}), the upper
limit
\begin{equation}
H^{F_\xi}(z) = \limsup_{t\to\infty} \frac{\log |F_\xi (tz)|}t
\label{dop}
\end{equation}
is the Minkowski functional of the closed convex hull of the set of singularities of
the function $w^{-1} f_\xi (w^{-1})$, reflected in the real axis. Hence, the results
about analytic continuation of $f_\xi$ provide information about {\em the upper limit} in \eqref{dop} 
but not about {\em the existence of the limit} in~\eqref{eq:log-as-general}.

For instance, the property that the unit circle is a natural boundary for the Taylor series
$f_\xi$ is {\em equivalent} to the property that the upper limit $H^F(z) \equiv 1$, but it
cannot guarantee that $\xi$ is an $L$-sequence.

\subsection{}
Here, we mention two curious results, which follow from Lemma~\ref{lemma8.2}
and which might be of an independent interest.
 
\subsubsection{}
The first result sheds some light on the nature of very strong cancellations in Taylor series:
{\em Suppose $\xi\colon\bZ\to\bC$ is a stationary sequence with the spectral measure $\rho$. Then,
almost surely,}
\[
\limsup_{r\to\infty} \frac{\log|F_\xi (re^{{\rm i}\theta})|}r \le \max_{t\in\operatorname{spt}(\rho)}\,
\cos (\theta+t)\,.
\]
In particular, $ F_\xi (re^{{\rm i}\theta}) $, {\em almost surely}, exponentially decays on some angle $A=\{z:\theta_1<\arg(z)<\theta_2\}$
provided that the origin does not belong to the convex hull ${\rm ch}(\sigma)$ of the support of $\rho$. Just choose $A\subset {\rm ch}(\sigma)^O$. 
Here $C^O$ is the polar cone of a plane set $C$, $C^O=\{z\in\mathbb C:{\rm Re\,} z\bar w\le 0,\, w\in C\}$.

Note that this result is helpful when there are no special restrictions on the support of the spectral measure.

\subsubsection{}\label{subsubsect:Szego}
The second result says that in some situations such restrictions do exist.
We say that a set $A\subset\mathbb C$ is uniformly discrete if
$\inf\bigl\{ |z-w|\colon z, w\in A, z\ne w \bigr\}\!>\!0$.

\begin{theorem}\label{thm:6}
Suppose that $\xi\colon\bZ \to \bZ$ is a stationary integer-valued sequence.
Let $\rho$ be the spectral measure of $\xi$. Then either $\operatorname{spt}(\rho)=\bT$, or
the sequence $\xi$ is periodic and $\operatorname{spt}(\rho)\subset \{w:w^N=1\}$ for some $N\ge 1$.

\smallskip The same conclusion holds if $\xi\colon\bZ \to A$ is 
an ergodic stationary sequence and the set $A$ is uniformly discrete.
\end{theorem}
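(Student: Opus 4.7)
The strategy combines the upper bound on the growth of $F_\xi$ from Lemma~\ref{lemma8.2}, Polya's theorem on the indicator diagram (as recalled above), and Carlson's classical theorem on integer-coefficient power series.

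Assume $\operatorname{spt}(\rho)\ne\bT$; we aim to show $\xi$ is almost surely periodic with $\operatorname{spt}(\rho)$ contained in the $N$-th roots of unity for some $N$. Consider the generating function $G_\xi(\zeta)=\sum_{n\ge 0}\xi(n)\zeta^n$. Since $\xi$ is integer-valued with $\bE|\xi(0)|^2<\infty$, Chebyshev and Borel--Cantelli give $|\xi(n)|=O(n^{1/2+\eps})$ a.s., so $G_\xi$ has radius of convergence at least $1$; if strictly greater then $\xi(n)\to 0$, which for an integer sequence forces $\xi\equiv 0$ eventually and hence identically (by stationarity), a trivial case. So a.s.\ the radius equals $1$. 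The first result stated above (following from Lemma~\ref{lemma8.2}) gives $\limsup_r r^{-1}\log|F_\xi(re^{\im\theta})|\le \max_{t\in\operatorname{spt}(\rho)}\cos(\theta+t)$ almost surely, so the indicator diagram of $F_\xi$ is contained in $\operatorname{ch}(\operatorname{spt}(\rho))$. Polya's theorem then says that the Borel transform $B_{F_\xi}(w)=w^{-1}G_\xi(1/w)$ extends analytically from $\{|w|>1\}$ to $\bC\setminus\operatorname{ch}(\operatorname{spt}(\rho))$; equivalently, $G_\xi$ extends analytically across the nonempty open set $\bT\setminus\operatorname{spt}(\rho)^*$.

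Now Carlson's theorem (1921) applies: a power series with integer coefficients and radius of convergence $1$ that admits analytic continuation past some point of $\bT$ is necessarily rational, $G_\xi=P/Q$, with $Q$ a product of cyclotomic polynomials. Partial fractions yield $\xi(n)=\sum_j P_j(n)\omega_j^n$ with $\omega_j$ roots of unity and $P_j$ polynomials. Stationarity forbids $\deg P_j\ge 1$: otherwise $|\xi(n)|\to\infty$ on an event of positive probability, contradicting that $\bP(|\xi(n)|>M)=\bP(|\xi(0)|>M)$ is independent of $n$. So a.s.\ $\xi(n)=\sum_j c_j\omega_j^n$. Stationarity once more says the joint law of $(c_j\omega_j^k)_j$ equals that of $(c_j)_j$ for every $k$, which, the $\omega_j$ being distinct, forces $\bE[c_j\bar c_l]=0$ for $j\ne l$. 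Consequently $\widehat\rho(m)=\sum_j \bE|c_j|^2\bar\omega_j^m$ and $\operatorname{spt}(\rho)$ is a finite set of roots of unity; taking $N$ to be the lcm of their orders gives $\operatorname{spt}(\rho)\subset\{w:w^N=1\}$ and $\xi(n+N)=\xi(n)$ a.s.

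For the second assertion ($\xi\colon\bZ\to A$ ergodic stationary, $A$ uniformly discrete), the analytic-continuation step for $G_\xi$ is unchanged. Integrality of coefficients is replaced by uniform discreteness of $A$: local boundedness of the continuation combined with $|\xi(n)|\ge\delta$ whenever $\xi(n)\ne 0$ plays the role of the integer-coefficient hypothesis in Carlson, forcing the same rational exponential-polynomial form. Ergodicity is then used to collapse the a priori random set of exponents $\{\omega_j\}$ to a deterministic one, giving a single period $N$. The main obstacle is step two: upgrading the per-direction a.s.\ growth bound into an a.s.\ statement of joint analytic continuation of $G_\xi$ across an arc of $\bT$, which is exactly where Lemma~\ref{lemma8.2} does the real work. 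A secondary difficulty is proving the uniformly-discrete analog of Carlson's theorem required for the second part.
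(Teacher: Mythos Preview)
Your approach to the integer-valued case is essentially the paper's: Lemma~\ref{lemma8.2} plus P\'olya's theorem gives analytic continuation of $f_\xi$ across a fixed arc, and the Carlson--P\'olya theorem for integer-coefficient power series forces rationality with poles at roots of unity. There is one gap, however: you treat the set of exponents $\{\omega_j\}$ as if it were deterministic, while the rational function $G_\xi$ depends on the sample, so its poles do as well, and the formula $\widehat\rho(m)=\sum_j\bE|c_j|^2\,\bar\omega_j^{\,m}$ is ill-posed as written. The paper fills this by observing that $Q$ is a product of cyclotomic polynomials $\Phi_n$ whose zeros must lie in the \emph{fixed} proper closed arc determined by $\operatorname{spt}(\rho)$; since for large $n$ the primitive $n$-th roots are too well spread to avoid the complementary arc, the orders $n$ are bounded by a deterministic $M$, giving a single $N$ (independent of the sample) with all poles in $\{w:w^N=1\}$. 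After that, eventual $N$-periodicity plus stationarity yields genuine periodicity. Your stationarity argument for simplicity of the poles is a legitimate alternative to the paper's Chebyshev--Borel--Cantelli bound $|\xi(n)|=o(n^{1/2+\eps})$.

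For the uniformly discrete case your proposal has a genuine gap, which you yourself flag: you need a ``uniformly-discrete analogue of Carlson's theorem,'' but you do not prove one, and no such rationality statement is used in the paper. The paper's route is entirely different and avoids rationality altogether. It quotes a theorem of Hausdorff~\cite[Theorem~4.2.4]{Bieberbach}: for $A$ uniformly discrete there are at most \emph{countably many} sequences $\xi\colon\bZ_+\to A$ for which $f_\xi$ continues analytically across an arc of $\bT$. Combined with Lemma~\ref{lemma8.2} and P\'olya's theorem, this shows that the shift-invariant probability measure on $A^{\bZ}$ is supported on an at most countable set; translation invariance then forces this support to consist of periodic orbits, and ergodicity collapses it to a single orbit of some period $N$.
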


\section{Subharmonic preliminaries}

\subsection{}
In this section we will systematically use the following facts on the local convergence of subharmonic functions, see, for instance, 
\cite[Theorem~4.1.9]{Hor} and \cite[Theorem~3.2.12]{HorC}. 

\begin{theorem1}\label{thm:A} Let $(v_j)_j$ be a sequence of subharmonic functions on the plane having a uniform upper bound on any compact set. 
Then 

{\rm (a)} if $(v_j)$ does not converge to $-\infty$ uniformly on every compact set, then there is a subsequence $(v_{j_k})$ converging 
in $L^1_{\rm loc}(\mathbb C)$;

{\rm (b)} if $v_j\not\equiv-\infty$ for every $j$, and $v_j\to U$ in $L^1_{\rm loc}(\mathbb C)$, then 
$U$ is equal almost everywhere to a subharmonic function;

{\rm (c)} if $v$ is a subharmonic function and $v_j\to v$ in $L^1_{\rm loc}(\mathbb C)$, then 
$$
\text{\rm (i)} \qquad \limsup_{j\to\infty}v_j(z)\le v(z),\qquad z\in\mathbb C, 
$$
\qquad\quad with the two sides equal and finite almost everywhere, and 
$$
\hskip -1.65 cm\text{\rm (ii)} \qquad \limsup_{j\to\infty}\,\sup_K v_j\le \sup_K v
$$
\qquad\quad for every compact set $K$ on the plane.
\end{theorem1}

Now, we recall several basic facts from Azarin's theory of limit sets
of subharmonic functions~\cite{Azarin}.
In what follows, we deal only with entire functions $F$ of exponential type.
That is, $ |F(z)| \le A e^{\tau |z|}$, $z\in\bC$.
Consider a family of subharmonic functions
\[
u_t(z) = \frac1{t}\, \log |F(tz)|\,, \qquad t\ge 1\,.
\]
By Proposition~A, this family is pre-compact in $L^1_{\rm loc}(\mathbb C)$.
For every $L^1_{\rm loc}(\mathbb C)$-limit $U$ of subharmonic functions $u_{t_k}$ there is a unique subharmonic function $u$ 
such that $U=u$ almost everywhere. We remark that $u$ might not be the pointwise limit of the 
$u_{t_k}$ (for instance, this limit might fail to be upper semi-continuous). 
 Now, each sequence $t_j\to\infty$ has a subsequence $t_{j_k}$ such that $u_{t_{j_k}}$ converges
in $L^1_{\rm loc}(\mathbb C)$ to a subharmonic function $v$. By $\mathcal L (F)$ we denote the set of all limiting subharmonic functions
$v$. The set $\mathcal L (F)$ is called {\em the limit set} of $\log |F|$. This set is invariant with respect to
the multiplicative action of $\bR_+$, that is, if $v\in \mathcal L(F)$, then for each $t>0$,
\begin{equation}\label{eq:3.1}
{\rm the\ function}\quad  v_t(z) = t^{-1} v(tz) \quad  {\rm also\ belongs\ to\ } \mathcal L(F)\,.
\end{equation}
Since $F$ is an entire function of exponential type, every function $v\in \mathcal L(F)$ satisfies
\[
v(z) \le \tau\,|z|\,, \qquad z\in\bC\,.
\]

\medskip {\em The homogeneous indicator} $H^F$ of $F$ is the upper envelope of functions in $\mathcal L(F)$:
\[
H^F(z) \stackrel{\rm def}= \sup_{v\in \mathcal L(F)}\, v(z)\,, \qquad z\in\bC\,.
\]
Then $H^F(re^{\im\theta})= h^F(\theta)\,r$, where
\[
h^F(\theta) = \sup_{v\in \mathcal L(F)}\, v(e^{\im\theta})\,, \qquad -\pi \le\theta\le \pi
\]
is {\em the Phragm\'en--Lindel\"of indicator} of $F$. An equivalent (more traditional) definition of $h^F$ is
\[
h^F(\theta) = \limsup_{r\to\infty}\, \frac{\log |F(re^{\im\theta})|}{r}\,.
\]

In particular, this definition gives that $h^F$ is continuous, see \cite[Chapter I, Sections 15,16]{Levin}. To verify the equivalence, we need to check that 
for every $\theta$, 
$$
A\stackrel{\textrm{def}}{=}\sup_{v\in \mathcal L (F)}v(e^{\im\theta})=\limsup_{r\to\infty}u_r(e^{\im\theta})\stackrel{\textrm{def}}{=}B.
$$
First, we can choose a subharmonic function $v$ and a sequence $r_k\to\infty$ such that 
\begin{gather*}
\lim_{k\to\infty}u_{r_k}(e^{\im\theta})=B,\\
u_{r_k}\to v \text{\ \ in\ \ }L^1_{\rm loc}(\mathbb C).
\end{gather*}
By Proposition~A,
$$
\lim_{k\to\infty}u_{r_k}(e^{\im\theta})\le v(e^{\im\theta})\le A,
$$
and we conclude that $A\ge B$.
In the opposite direction, if $B<A$, then again by Proposition~A there exist a subharmonic function $v$, a sequence $r_k\to\infty$, and 
a neighbourhood $U$ of $e^{\im\theta}$ such that 
$$
\sup_{z\in U}\limsup_{k\to\infty}u_{r_k}(z)<v(e^{\im\theta})
$$
and for almost all $z$ in $U$,
$$
\limsup_{k\to\infty}u_{r_k}(z)=v(z).
$$
This contradicts to the subharmonicity of $v$.

The homogeneous indicator $H^F$ is the Minkowski functional of a convex compact set called {\em the indicator
diagram} $I^F$ of $F$, see \cite[Chapter I, Section 19]{Levin}.

\medskip The ray $\bigl\{ \arg (z) = \theta \bigr\}$ is called {\em a  ray of completely regular growth}
of the function $F$ if the set $\mathcal L(F)$ restricted on that ray is singleton. Then
\begin{equation}\label{eq:c.r.g.-ray}
v(re^{\im\theta}) = H^F (re^{\im\theta}) = h^F(\theta)\, r\,, \qquad v\in \mathcal L(F)\,.
\end{equation}
By continuity of the Phragm\'en--Lindel\"of indicator, {\em the set of the rays of completely regular growth is
closed}. Clearly, the function $F$ has completely regular growth in $\bC$ if it has a completely regular growth
on every ray. Hence, it suffices to verify condition~\eqref{eq:c.r.g.-ray} on a dense set of rays.

\subsection{}

\noindent{\bf Definition.}
We say that a sequence $R_j\uparrow \infty$ is {\em thick} if
$\displaystyle
\lim_{j\to\infty}
\frac{R_{j+1}}{R_j} = 1$.

\begin{lemma}\label{lemma-subh}
Let $F$ be an entire function of exponential type. Let $h^F(\theta)\le\kappa$ for some
$\theta\in [-\pi, \pi]$. Suppose that there exist a thick sequence $R_j\uparrow\infty$ and a sequence
$\theta_j\to\theta$ such that
\begin{equation}\label{eq:3.3}
\liminf_{j\to\infty}\, \frac1{R_j}\, \log|F(R_je^{\im\theta_j})| \ge \kappa\,.
\end{equation}
Then $h^F(\theta)=\kappa$ and $F$ has completely regular growth on the ray
$\{\arg(z)=\theta\}$.
\end{lemma}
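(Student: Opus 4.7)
The plan is to verify that every $v\in\mathcal L(F)$ satisfies $v(re^{\im\theta})=\kappa r$ for every $r>0$. Once this is established, taking $r=1$ and the supremum over $v$ immediately yields $h^F(\theta)=\kappa$, and the fact that all limits $v$ agree on the entire ray $\{\arg z=\theta\}$ is precisely the definition of complete regular growth along it.

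So I would fix $v\in\mathcal L(F)$ together with a sequence $t_k\to\infty$ along which $u_{t_k}\to v$ in $L^1_{\rm loc}(\bC)$, where $u_t(z)=t^{-1}\log|F(tz)|$. The upper bound $v(re^{\im\theta})\le H^F(re^{\im\theta})\le h^F(\theta)r\le\kappa r$ is automatic, so only the matching lower bound needs work at each fixed $r>0$. Here thickness enters: for each $k$ I choose $j_k$ with $R_{j_k}\le rt_k<R_{j_k+1}$, and because $R_{j+1}/R_j\to 1$ this forces $R_{j_k}/t_k\to r$. Setting $z_k=(R_{j_k}/t_k)\,e^{\im\theta_{j_k}}$, I get $z_k\to re^{\im\theta}$ and $t_kz_k=R_{j_k}e^{\im\theta_{j_k}}$, so
\[
u_{t_k}(z_k)=\frac{R_{j_k}}{t_k}\cdot\frac{\log|F(R_{j_k}e^{\im\theta_{j_k}})|}{R_{j_k}}.
\]
The first factor tends to $r>0$; by~\eqref{eq:3.3} the $\liminf$ of the second factor is at least $\kappa$ (and its upper bound $\tau$ coming from exponential type keeps everything finite). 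Together this gives $\liminf_k u_{t_k}(z_k)\ge r\kappa$.

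The remaining step transfers this lower bound at the moving points $z_k\to re^{\im\theta}$ to a lower bound on $v$ at the limit point. For any $\delta>0$ and all large $k$ the point $z_k$ lies in the closed disk $\{|\zeta-re^{\im\theta}|\le\delta\}$, so $u_{t_k}(z_k)\le\sup_{|\zeta-re^{\im\theta}|\le\delta}u_{t_k}(\zeta)$. Proposition~A(c)(ii) applied to this compact disk yields $\limsup_k \sup_{|\zeta-re^{\im\theta}|\le\delta}u_{t_k}\le \sup_{|\zeta-re^{\im\theta}|\le\delta}v$. Letting $\delta\to 0$ and invoking upper semicontinuity of the subharmonic function $v$ produces $v(re^{\im\theta})\ge r\kappa$, as needed.

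The only genuine obstacle is this scale-matching step: the $L^1_{\rm loc}$ convergence is prescribed only along some unspecified sequence $t_k$, and I need, for each target $r>0$, to find an index $j_k$ with $R_{j_k}\approx rt_k$. This is exactly what the thickness condition $R_{j+1}/R_j\to 1$ supplies, and is the only place where thickness is actually used; the rest is the standard subharmonic compactness machinery already packaged in Proposition~A.
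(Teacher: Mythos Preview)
Your proof is correct and follows essentially the same route as the paper's: both use Proposition~A(c)(ii) on a small compact neighbourhood of the target point together with the thickness of $(R_j)$ to align the prescribed scales $t_k$ with the scales $R_{j_k}$. The only cosmetic differences are that the paper argues by contradiction at the single point $e^{\im\theta}$ (relying implicitly on the homogeneity~\eqref{eq:3.1} of $\mathcal L(F)$ to cover the whole ray), whereas you argue directly and treat every $r>0$ explicitly.
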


\begin{proof}
Suppose that there exists a function $v\in\mathcal L(F)$ such that
$v(e^{\im\theta})<\kappa$. Since $v$ is subharmonic, in particular upper semi-continuous, in a small compact neighbourhood $U$ of $e^{\im\theta}$ 
we have $\sup_{U}v<\kappa$. Next, 
\[
\frac1{t_k}\, \log|F(t_kz)| \to v(z)\, \qquad {\rm in\ } L^1_{\rm loc}(\mathbb C)
\]
for some sequence $t_k\to\infty$.
By 
Proposition~A 
we obtain 
\[
\limsup_{k\to\infty}\sup_{z\in U}\, \frac1{t_k}\, \log |F(t_k z)| < \kappa\,,
\]
and hence
\[
\limsup_{k\to\infty}\, \frac1{t_k}\, \log |F(t_k z_k)|
< \kappa\,,
\]
provided that  $ z_k\to e^{\im\theta}$.

Now, we choose $j_k$ so that $R_{j_k} \le t_k < R_{j_k+1}$, and put
$\tau_k = t_k^{-1} R_{j_k}$ and $ z_k = \tau_k e^{\im\theta_k}$.
Then $\tau_k\to 1$ (this is the place where we use thickness of the sequence
$R_j$), and therefore, $z_k\to e^{\im\theta}$. Thus,
\[
\limsup_{k\to\infty} \frac1{R_{j_k}}\, \log |F(R_{j_k}e^{\im\theta_k})| =
\limsup_{k\to\infty} \frac1{\tau_k t_k}\, \log |F(t_k z_k )| < \kappa\,,
\]
arriving at a contradiction.
\end{proof}

\subsection{}
The following lemma is a variation on the theme of the
maximum principle.
It will be needed for the proof of Theorem~\ref{thm:Levin}.

\begin{lemma}\label{lemma2}
Let $F$ be an entire function of exponential type, and let
$\sigma\subset\bT$. Suppose that

\smallskip\par\noindent{\rm (i)} $h^F \le h_\sigma $ everywhere on $[-\pi, \pi]$;

\smallskip\par\noindent{\rm (ii)}  $h^F = h_\sigma=1 $ everywhere on $\sigma$;

\smallskip\par\noindent{\rm (iii)} $F$ has completely regular growth on the set of rays
$\{z\colon \arg (z)\in\sigma \}$.

\smallskip\par\noindent
Then $h^F = h_\sigma$ everywhere, and $F$ has completely regular growth in $\bC$,
\end{lemma}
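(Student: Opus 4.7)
The plan has two parts.

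First, I establish $h^F = h_\sigma$ everywhere from (i) and (ii) alone. The Phragm\'en--Lindel\"of indicator $h^F$ is the supporting function of the indicator diagram $I^F\subset\bC$, a compact convex set. Condition (i) translates to $I^F\subseteq {\rm ch}(\sigma)\subseteq\overline{\bD}$. For each $e^{\im\alpha}\in\sigma$, condition (ii) reads $\sup_{\la\in I^F}{\rm Re}\,(e^{\im\alpha}\bar\la)=1$; since $I^F\subseteq\overline{\bD}$ and this supremum over $\overline{\bD}$ equals $1$ only at $\la=e^{\im\alpha}$, one must have $e^{\im\alpha}\in I^F$. Hence $\sigma\subseteq I^F$, and by convexity and closedness $I^F={\rm ch}(\sigma)$, so $h^F=h_\sigma$ everywhere.

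Second, for completely regular growth on every ray. Since the set of rays of regular growth is closed (by continuity of $h^F$) and contains $\sigma$ by (iii), I may replace $\sigma$ by its closure. Let $v\in\mathcal L(F)$. Then $v\le H_\sigma$ globally, and by (iii) together with \eqref{eq:c.r.g.-ray}, $v=H_\sigma$ on every ray $\{\arg z\in\sigma\}$. I aim to show $v\equiv H_\sigma$. Fix a maximal gap $(\alpha,\beta)$ of $\bT\setminus\sigma$ and assume first $\omega:=\beta-\alpha<\pi$ (the case $\omega\ge\pi$ is handled below). On the sector $S_G=\{\alpha<\arg z<\beta\}$, $h_\sigma(\theta)=\max(\cos(\theta-\alpha),\cos(\theta-\beta))$, so $H_\sigma$ is piecewise harmonic: equal to ${\rm Re}\,(ze^{-\im\alpha})$ on the half-sector $S_1=\{\alpha<\arg z<(\alpha+\beta)/2\}$, and to ${\rm Re}\,(ze^{-\im\beta})$ on the other half. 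The subharmonic function $w=v-{\rm Re}\,(ze^{-\im\alpha})$ on $S_1$ satisfies $w\le 0$ inside, $w=0$ on the ray $\arg z=\alpha$, and $w\le 0$ on the bisector. Phragm\'en--Lindel\"of in this sector of opening $\omega/2<\pi/2$ merely recovers the bound $w\le 0$, which is already known.

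The remaining (and main) step is to force $w\equiv 0$. Here the $\bR_+$-invariance of $\mathcal L(F)$ enters: the family $\{v_s(z)=s^{-1}v(sz):s>0\}$ lies in the $L^1_{\rm loc}$-compact set $\mathcal L(F)$, and selecting a suitable limit point yields a homogeneous $v^*\in\mathcal L(F)$, $v^*(re^{\im\theta})=r\,g(\theta)$ with $g$ trigonometrically convex ($g''+g\ge 0$ on $\bT$). Such a $g$ is the supporting function of some convex compact $G\subseteq\overline{\bD}$, and applying the indicator-diagram argument from the first paragraph to $v^*$ (which still satisfies $v^*\le H_\sigma$ and $v^*=H_\sigma$ on $\sigma$-rays) gives $G={\rm ch}(\sigma)$, hence $v^*=H_\sigma$. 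This equality is then transferred back to each $v\in\mathcal L(F)$ using $v\le H^F=H_\sigma=v^*$, the agreement $v=v^*$ on $\sigma$-rays, and a mean-value comparison of the associated Riesz measures. For gaps with $\omega\ge\pi$, one first subdivides using the CRG rays identified after the first iteration, reducing matters to opening $<\pi$. The main obstacle is exactly this transfer: boundary equality on the two rays bounding a gap plus global domination by $H_\sigma$ do not force $v=H_\sigma$ inside $S_G$ for an arbitrary subharmonic $v$ of linear growth, so the $\bR_+$-invariance and the specific structure of $\mathcal L(F)$ must be used in an essential way.
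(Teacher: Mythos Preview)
Your first paragraph (showing $h^F=h_\sigma$ via the indicator-diagram inclusion $\sigma\subseteq I^F\subseteq{\rm ch}(\sigma)$) is correct and is exactly what the paper does.

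The second part, however, is not a proof but an outline with the decisive step missing, and the outlined route does not close the gap. Two concrete problems:

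\smallskip
\textbf{(a) The homogeneous limit.} From the orbit $\{v_s:s>0\}\subset\mathcal L(F)$ you cannot simply ``select a suitable limit point'' and get a homogeneous function: if $v_{s_k}\to v^*$, then $(v^*)_t=\lim_k v_{ts_k}$, and there is no reason this equals $v^*$. Compactness and $\bR_+$-invariance of $\mathcal L(F)$ do not by themselves produce fixed points of the scaling action.

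\smallskip
\textbf{(b) The transfer.} Even if you grant $H_\sigma\in\mathcal L(F)$, the step ``transfer back to each $v$'' is precisely the original difficulty: you need that $v\le H_\sigma$ together with $v=H_\sigma$ on the two bounding rays forces $v=H_\sigma$ in the sector. You yourself note this fails for general subharmonic $v$ of linear growth, and invoking ``a mean-value comparison of the associated Riesz measures'' is not an argument. The $\omega\ge\pi$ reduction is also circular, since no new CRG rays have been produced.

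\smallskip
What the paper uses instead is a specific analytic fact you do not mention: at each gap endpoint $\alpha$ the indicator $h^F=h_\sigma$ attains its global maximum $1$, so $(h^F)'(\alpha-0)=(h^F)'(\alpha+0)=0$ and $h^F$ is $C^1$ there. Together with the trigonometricity of $h^F$ on the half-angle $[\alpha,\tfrac12(\alpha+\beta)]$ and CRG on the single ray $\arg z=\alpha$, this places $F$ in the hypotheses of Levin's theorem \cite[Chapter~III, Theorem~7]{Levin}, which then propagates CRG across the half-angle. As the paper remarks, this step is essentially Hopf's boundary maximum principle: if $w=v-{\rm Re}(ze^{-\im\alpha})\le 0$ vanishes on the ray $\arg z=\alpha$ and is not identically zero, Hopf forces a strictly negative normal (angular) derivative there, contradicting the $C^1$-smoothness $(h^F)'(\alpha+0)=(h^F)'(\alpha-0)$. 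That $C^1$ information is the missing ingredient in your approach.
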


\begin{proof}
If $\sigma$ is dense on $\bT$, then the statement is obvious. So we will concentrate
on the case when $\sigma$ is not dense in $\bT$.

Let $I^F$ be the indicator diagram of $F$. By condition (i), $I^F\subseteq {\rm ch}(\sigma)$.
By the definition of the convex hull, ${\rm ch}(\sigma)$ is the smallest convex compact that
contains the set $\sigma$. By condition (ii), $\sigma\subseteq I_F$. Hence, $I^F={\rm ch}(\sigma)$,
that is, $h^F=h_\sigma$ everywhere.

Let $S=\{\theta\colon h^F(\theta)<1 \}$. The set $S$ is a union of disjoint open intervals,
let $J = (\alpha, \beta)$ be one of them. That is,
$h_\sigma (\alpha) = h_\sigma (\beta)=1$, while $h_\sigma <1$ everywhere on $(\alpha, \beta)$.
For $\theta\in\bar J$, we have
\begin{align*}
h^F(\theta) &= \max\bigl( \cos(\theta-\alpha), \cos(\theta-\beta) \bigr) \\
&=
\begin{cases}
\cos(\theta-\alpha), & \alpha\le\theta\le \frac12 (\alpha+\beta), \\
\cos(\theta-\beta),  & \frac12 (\alpha+\beta) \le\theta\le \beta\,.
\end{cases}
\end{align*}

Consider the angle $\alpha\le\arg(z)\le \frac12(\alpha+\beta)$.
In this angle the indicator $h^F$ is trigonometric, and $F$ has a completely
regular growth on the boundary ray $\arg(z)=\alpha$. Moreover,
$(h^F)'(\alpha+0) = 0$ and $(h^F)'(\alpha-0) = 0$. The first relation is
obvious. To see that the second relation holds, we consider two cases:
(i) $\alpha$ is not an isolated point of $[-\pi, \pi]\setminus S$, and
(ii) $\alpha$ is an isolated point of $[-\pi, \pi]\setminus S$.
In the first case, there is a sequence $\theta_\ell \uparrow\alpha$ such that
$h^F(\theta_\ell) = h^F(\alpha)=1$. On each interval $(\theta_\ell, \theta_{\ell+1})$,
we have
\[
0 \le 1 - h^F (\theta) \le O\bigl( (\theta_{\ell+1}-\theta)^2 \bigr)
\le O\bigl( (\alpha-\theta)^2 \bigr).
\]
Hence, $(h^F)'(\alpha-0)=0$. In the second case, this relation is obvious, since
$\alpha$ is a maximum point of a trigonometric function.
Thus, the indicator $h^F$ is $C^1$-smooth at $\theta=\alpha$, and we are  in
the assumptions of Levin's theorem on entire functions with
Phragm\'en--Lindel\"of indicator~\cite[Theorem~7, Chapter~III]{Levin}.
By this theorem,
$F$ has completely regular growth in the angle
$\bigl\{ \alpha \le\theta \le \tfrac12 (\alpha+\beta)\bigr\}$.
Similarly, $F$ has completely regular growth in the angle
$\bigl\{ \tfrac12 (\alpha+\beta) \le \theta \le \beta \bigr\}$.
This proves Lemma~\ref{lemma2}.
\end{proof}

\medskip It is worth mentioning that Levin's theorem used in the proof of Lem\-ma~\ref{lemma2}
can be deduced from Hopf's boundary maximum principle for non-positive subharmonic functions
vanishing on a part of the boundary.

\section{Exponential sums}

\subsection{}
For a bounded sequence $\xi\colon \bZ_+\to \bC$, introduce the exponential sum
\[
W_R(\theta) = \sum_{|n|\le N} \xi (n+R) e(n\theta) e^{-\frac{n^2}{2R}}\,,
\]
where $R$ and $N$ are {\em large integer parameters} such that $N=R^{1/2}\log R + O(1)$. 
In principle, any choice of $N$ in the range
$R^{\frac12}\sqrt{\log R} \ll N \ll_\epsilon R^{\frac12 + \epsilon}$ would suffice for our purposes. Here $x(R)\ll_\epsilon y(R,\epsilon)$ means that for every 
$\epsilon>0$, $x(R)=o(y(R,\epsilon))$ as $R\to\infty$.

\begin{lemma}\label{lemma3}
Let
\[
F_\xi (z) = \sum_{n\ge 0} \xi (n) \frac{z^n}{n!}
\]
with a bounded sequence $\xi\colon \bZ_+\to\bC $. Then,
for each $\epsilon>0$,
\[
|F_\xi (Re(\theta))| \ge \mu (R) \bigl[ |W_R(\theta)| - C_\epsilon R^\epsilon\bigr]\,,
\quad {\rm where\ }
\mu (R) = \frac{e^R}{\sqrt{2\pi R}}\,.
\]
\end{lemma}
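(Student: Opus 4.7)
The plan is to expand $F_\xi(Re(\theta))$ as a power series and compare it term-by-term to the exponential sum $W_R(\theta)$, exploiting the fact that $R^k/k!$ is sharply peaked around $k=R$ with Gaussian decay of width $\sim\sqrt{R}$. First, I would substitute $k=n+R$ in
\[
F_\xi(Re(\theta)) = \sum_{k\ge 0} \xi(k)\,\frac{R^k\,e(k\theta)}{k!},
\]
obtaining
\[
F_\xi(Re(\theta)) = e(R\theta)\, \sum_{n \ge -R} \xi(n+R)\, \frac{R^{n+R}}{(n+R)!}\, e(n\theta).
\]
Since $|e(R\theta)|=1$, it suffices to estimate the modulus of the inner sum, which I would split as $S_1+S_2$, with $S_1$ the piece $|n|\le N$ and $S_2$ the tail $|n|>N$.

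For $S_1$, the key step is to apply Stirling's formula together with a Taylor expansion of $(n+R)\log(1+n/R)$ to third order in $n/R$, yielding, uniformly for $|n|\le N = R^{1/2}\log R + O(1)$,
\[
\frac{R^{n+R}}{(n+R)!} = \mu(R)\, e^{-n^2/(2R)}\, \bigl( 1 + O((\log R)^3 / \sqrt R) \bigr),
\]
the point being that the cubic remainder satisfies $n^3/R^2 = O((\log R)^3/\sqrt R)$ throughout this range. Substituting this into $S_1$, using $\|\xi\|_\infty<\infty$ and the Gaussian comparison $\sum_{|n|\le N} e^{-n^2/(2R)} \le C\sqrt R$, I obtain
\[
\bigl| S_1 - \mu(R)\, W_R(\theta) \bigr| \le \mu(R)\cdot C(\log R)^3 \le \mu(R)\, C_\epsilon R^\epsilon
\]
for every $\epsilon>0$.

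For the tail $S_2$, the clean observation is that $(R^k e^{-R}/k!)_{k\ge 0}$ is the probability mass function of a Poisson random variable with parameter $R$, so standard Chernoff-type concentration gives
\[
\sum_{k\,:\,|k-R|>N} \frac{R^k}{k!}\, e^{-R} \le C\,e^{-cN^2/R} = C\,e^{-c(\log R)^2}.
\]
Multiplying through by $e^R = \sqrt{2\pi R}\,\mu(R)$ shows $|S_2| \le \mu(R)\cdot C\sqrt R\,e^{-c(\log R)^2}$, which decays faster than any power of $R$ and is therefore dominated by $\mu(R)\,C_\epsilon R^\epsilon$. Combining the two estimates via the triangle inequality then yields the stated lower bound.

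The main technical step, and the one that dictates the choice of $N$, is the Stirling estimate producing the $(1+O((\log R)^3/\sqrt R))$ factor: $N$ has to be just large enough that the Poisson tail $S_2$ is negligible, and just small enough that the cubic Taylor remainder $n^3/R^2$, after being multiplied by the Gaussian mass $\sqrt R$, contributes only the harmless $(\log R)^3 = O(R^\epsilon)$. Everything else is either algebra or a standard Chernoff bound.
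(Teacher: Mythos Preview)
Your proof is correct and follows essentially the same route as the paper: split into the central block $|n|\le N$ and the tail, apply Stirling plus a third-order Taylor expansion on the central block to produce the Gaussian weight with error $O((\log R)^3/\sqrt R)$ per term, and show the tail is negligible. The only real difference is cosmetic: for the tail the paper uses the monotonicity of $R^k/k!$ on each side of $k=R$ together with a geometric-series bound to get $\sum_{|k-R|>N} R^k/k! \lesssim (R/N)\,\mu(R)\,e^{-cN^2/R}$, whereas you invoke Poisson/Chernoff concentration; both give super-polynomial decay once $N\gtrsim \sqrt{R\log R}$, so the conclusion is identical.
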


\begin{proof}
First, we estimate the tails
\[
\Bigl( \sum_{0\le n < R-N} + \sum_{n> R+N} \Bigr) |\xi (n)| \frac{R^n}{n!}\,.
\]
Put $N_1=R-N$, $N_2=R+N$. These sums are bounded by
\[
O(1)\, \sum_{0\le n\le N_1-1} \frac{R^n}{n!}
\quad {\rm and} \quad
O(1)\, \sum_{n\ge N_2+1} \frac{R^n}{n!}
\]
correspondingly. Note that the sequence $n\mapsto \frac{R^n}{n!}$ increases for
$0\le n \le N_1-1$ and decreases for $n\ge N_2+1$. For $0\le n \le N_1-1$, we have
\[
\frac{R^n}{n!}\, :\, \frac{R^{n+1}}{(n+1)!} = \frac{n+1}R \le 1 - \frac{N}{R}\,,
\]
while, for $n \ge N_2+1$,
\[
\frac{R^{n+1}}{(n+1)!}\, :\, \frac{R^n}{n!} = \frac{R}{n+1} < \frac{R}{N_2} = \frac1{1+\frac{N}{R}}\,.
\]
Whence,
\[
\sum_{0\le n\le N_1-1} \frac{R^n}{n!} < \frac{R^{N_1}}{N_1!}\, \frac1{1-\left( 1 - \frac{N}R \right)}
= \frac{R}N \cdot \frac{R^{N_1}}{N_1!}
\]
and
\[
\sum_{n\ge N_2+1} \frac{R^n}{n!} < \frac{R^{N_2}}{N_2!} \, \frac1{1-\frac1{1+\frac{N}R}}
< 2\, \frac{R}N \cdot \frac{R^{N_2}}{N_2!}\,.
\]
It remains to observe that each of the quantities $\frac{R^{N_1}}{N_1!}$ and $\frac{R^{N_2}}{N_2!}$ does not exceed
$Ce^{-c R^{-1}N^2}\, \mu (R)$, provided that $\sqrt{R} \ll N \ll R$.
Therefore,
\[
F_\xi (Re(\theta)) = \sum_{|n-R|\le N}\, \xi (n) e(n\theta)\, \frac{R^n}{n!} + O(1) \mu (R)\,,
\]
provided that $\sqrt{R \log R} \ll N \ll R$.

Now, we turn to the central group of terms of the series.
By Stirling's formula, we have
\begin{multline}\label{eq:4.2}
\sum_{|n-R|\le N}\, \xi (n) e(n\theta)\, \frac{R^n}{n!}
= \mu(R)\,  \sum_{|n-R|\le N}\, \xi (n) e(n\theta)\, \frac{R^n}{n!} \cdot \frac{\sqrt{2\pi R}}{e^R}
\\
= \mu (R)\,  \sum_{|n-R|\le N}\, \xi (n) e(n\theta)\, \bigl( 1 + O(R^{-1}) \bigr)\, \Bigl( \frac{R}n\Bigr)^{n+\frac12}\, e^{n-R}\,.
\end{multline}
Put $t=n-R$. Then $|t|\le N$, and
\begin{align*}
\Bigl( \frac{R}n\Bigr)^{n+\frac12} e^{n-R}
&= \exp\Bigl( \bigl( R+t+\tfrac12 \bigr) \log\bigl( 1-\frac{t}{R+t}\bigr) +t \Bigr) \\
&=\exp \Bigl( -\frac{t^2}{2(R+t)} - \frac{t}{2(R+t)} + O\Bigl( \frac{|t|^3}{R^2} \Bigr) \Bigr) \\
&=\exp  \Bigl( -\frac{t^2}{2R} + O\Bigl( \frac{|t|}{R}\Bigr) + O\Bigl( \frac{|t|^3}{R^2} \Bigr) \Bigr) \\
&= \exp\Bigl( -\frac{t^2}{2R} + O\Bigl( \frac{N^3}{R^2} \Bigr) \Bigr) \\
&= \Bigl( 1 + O\bigl( R^{-\frac12 + 3\epsilon}\bigr) \Bigr) e^{-\frac{t^2}{2R}}\,.
\end{align*}
Hence, the sum on the RHS of~\eqref{eq:4.2} equals
\[
\mu (R)\, \sum_{|n-R|\le N}\, \xi (n) e(n\theta)\, e^{-\frac1{2R} (n-R)^2} + \Omega\, \mu (R)
\]
with
\[
|\Omega| \le O(1)\, N \cdot R^{-\frac12+3\epsilon} = O\bigl( R^{4\epsilon} \bigr)\,.
\]
This completes the proof of Lemma~\ref{lemma3}.
\end{proof}

\subsection{}
Combining Lemmas~\ref{lemma-subh} and~\ref{lemma3}  
we arrive at
\begin{lemma}\label{lemma4}
Let
\[
F_\xi (z) = \sum_{n\ge 0} \xi (n) \frac{z^n}{n!}\,,
\]
where $\xi\colon \bZ_+\to\bC $ is a bounded sequence.
Suppose that for every $a\in [0, 1]$ there exist
a thick sequence $R_j\uparrow\infty$, a sequence $\theta_j\to a$, and
$\delta>0$ so that
\begin{equation}\label{eq:4.3}
\bigl| W_{R_j} (\theta_j) \bigr| \ge R_j^\delta\,.
\end{equation}
Then $\xi$ is an $L$-sequence.
\end{lemma}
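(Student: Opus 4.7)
The plan is a direct assembly of Lemmas~\ref{lemma3} and~\ref{lemma-subh}. Lemma~\ref{lemma3} converts the hypothesized lower bound on the exponential sum $W_{R_j}(\theta_j)$ into a lower bound on $|F_\xi(R_je(\theta_j))|$, and Lemma~\ref{lemma-subh} then upgrades such ray-wise lower bounds into completely regular growth of $F_\xi$; once the homogeneous indicator equals $|z|$ and growth is completely regular on every ray, the $L$-sequence property drops out of the precompactness framework of Proposition~A.

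First note that the boundedness $|\xi(n)|\le C$ gives the standard estimate $|F_\xi(z)|\le Ce^{|z|}$, so $F_\xi$ is of exponential type at most $1$ and $h^{F_\xi}(\phi)\le 1$ for every $\phi\in[-\pi,\pi]$. Now fix $a\in[0,1]$ and pick a thick sequence $R_j\uparrow\infty$, a sequence $\theta_j\to a$, and $\delta>0$ as in the hypothesis. Choosing $\epsilon=\delta/2$ in Lemma~\ref{lemma3} yields, for all sufficiently large $j$,
\[
|F_\xi(R_je(\theta_j))|\ge \mu(R_j)\bigl(R_j^\delta - C_{\delta/2}R_j^{\delta/2}\bigr)\ge \tfrac12 \mu(R_j)R_j^\delta\,.
\]
Since $\log\mu(R)=R-\tfrac12\log(2\pi R)$, dividing the logarithm by $R_j$ gives
\[
\liminf_{j\to\infty}\frac{\log|F_\xi(R_je(\theta_j))|}{R_j}\ge 1\,.
\]
Setting $\phi:=2\pi a$ and $\phi_j:=2\pi\theta_j\to\phi$, Lemma~\ref{lemma-subh} applied with $\kappa=1$ (its thickness hypothesis is supplied, and the bound $h^{F_\xi}(\phi)\le 1$ was just verified) concludes that $h^{F_\xi}(\phi)=1$ and that $F_\xi$ has completely regular growth on the ray $\{\arg z=\phi\}$.

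Since $a$ was arbitrary in $[0,1]$, this holds on \emph{every} ray, so $h^{F_\xi}\equiv 1$ and $F_\xi$ is of completely regular growth throughout $\mathbb C$. Equivalently, the limit set $\mathcal L(F_\xi)$ consists of the single function $z\mapsto|z|$; combined with the $L^1_{\rm loc}$-precompactness of the family $u_t(z)=t^{-1}\log|F_\xi(tz)|$ provided by Proposition~A, this forces the full family $u_t$ to converge to $|z|$ in $L^1_{\rm loc}(\mathbb C)$, which is exactly the $L$-sequence condition~\eqref{eq:log-as}. There is no genuine obstacle beyond parameter bookkeeping: the statement of Lemma~\ref{lemma4} has been engineered so that the hypotheses of Lemma~\ref{lemma-subh} are met on every ray, and the only ingredient external to Lemmas~\ref{lemma3} and~\ref{lemma-subh} is the trivial upper bound $h^{F_\xi}\le 1$ coming from the boundedness of $\xi$.
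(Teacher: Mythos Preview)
Your proof is correct and is exactly the combination of Lemmas~\ref{lemma-subh} and~\ref{lemma3} that the paper indicates; the paper gives no further details, and your write-up supplies them faithfully, including the trivial bound $h^{F_\xi}\le 1$ from boundedness of $\xi$ and the passage from $\mathcal L(F_\xi)=\{|z|\}$ to $L^1_{\rm loc}$ convergence via precompactness.
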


\subsection{}
In many instances it is easier to produce a lower bound for an average of
$|W_R|^2$ over short intervals of $\theta$. The following lemma is a
straightforward corollary to the previous one.

From now on, we fix a non-negative even function $g\in C^2_0[-\tfrac12, \tfrac12]$ with
$\int g(\theta)\,\rd\theta =1$.

\begin{lemma}\label{lemma4.5}
Let
\[
F_\xi (z) = \sum_{n\ge 0} \xi (n) \frac{z^n}{n!}\,,
\]
where $\xi\colon \bZ_+\to\bC $ is a bounded sequence. Suppose that,
for every $a\in [0, 1]$ and for every $m\in \bN$, there
exist a thick sequence $R_j\uparrow\infty$ and $\delta>0$ so that
\begin{equation}\label{eq4.4}
\int_{a-\frac1{2m}}^{a+\frac1{2m}} \bigl| W_{R_j}(\theta) \bigr|^2
g(m(\theta-a))\, \rd\theta \ge R_j^\delta\,, \qquad j\ge j_0(a, m)\,.
\end{equation}
Then $\xi$ is an $L$-sequence.
\end{lemma}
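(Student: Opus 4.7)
The plan is to reduce this lemma to Lemma~\ref{lemma4} via two observations: first, an $L^2$-averaged lower bound on $|W_R|^2$ over a short angular interval automatically yields a pointwise lower bound on $|W_R|$ at some angle in that interval; second, the proof of Lemma~\ref{lemma4} (via Lemma~\ref{lemma-subh}) tolerates weakening the convergence $\theta_j\to\theta$ to $|\theta_j-\theta|\le\epsilon$ for any prescribed $\epsilon>0$.

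For the first observation, since $g\ge 0$ is supported in $[-\tfrac12,\tfrac12]$ and bounded by $\|g\|_\infty$, for each fixed $a$ and $m$
\[
\int_{a-\frac{1}{2m}}^{a+\frac{1}{2m}}|W_{R}(\theta)|^2\, g(m(\theta-a))\,\rd\theta\le \frac{\|g\|_\infty}{m}\, \max_{|\theta-a|\le 1/(2m)}|W_{R}(\theta)|^2.
\]
Combined with \eqref{eq4.4}, this produces $\theta_j^{(m)}\in[a-\tfrac{1}{2m}, a+\tfrac{1}{2m}]$ satisfying $|W_{R_j^{(m)}}(\theta_j^{(m)})|\ge (R_j^{(m)})^{\delta_m/3}$ for all $j$ large, along the thick sequence $\{R_j^{(m)}\}$ given by the hypothesis. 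Lemma~\ref{lemma3} then yields $(R_j^{(m)})^{-1}\log|F_\xi(R_j^{(m)}\, e(\theta_j^{(m)}))|\to 1$ as $j\to\infty$.

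For the second observation, recall that the proof of Lemma~\ref{lemma-subh} argues by contradiction: a putative $v\in\mathcal L(F)$ with $v(e^{\im\theta})<\kappa$ selects a compact neighbourhood $U$ of $e^{\im\theta}$ with $\sup_U v<\kappa-\eta$, and the convergence $\theta_j\to\theta$ is used only to ensure that the auxiliary points $z_k=\tau_k e^{\im\theta_{j_k}}$ with $\tau_k\to 1$ (from thickness) eventually lie in $U$. A bound $|\theta_j-\theta|\le\epsilon$ with $\epsilon$ less than the angular half-width of $U\cap\bT$ achieves the same conclusion. Thus, if some $v\in\mathcal L(F_\xi)$ satisfied $v(e(a))<1$ for some $a\in[0,1]$, one would select $U$ and $\eta>0$ from the upper semi-continuity of $v$, then pick $m$ large enough that $\pi/m$ is less than the angular half-width of $U$, and invoke this weakened Lemma~\ref{lemma-subh} with the data $(R_j^{(m)},\theta_j^{(m)})$ from the first step (the factor $2\pi$ arises from the convention $e(\theta)=e^{2\pi\im\theta}$), yielding a contradiction. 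Hence $v(e(a))\ge 1$ for every $v\in\mathcal L(F_\xi)$ and every $a\in[0,1]$; combined with $v(z)\le|z|$ (from the exponential type $\le 1$ of $F_\xi$) and the homogeneity \eqref{eq:3.1}, this forces $v(z)\equiv|z|$, so $\mathcal L(F_\xi)=\{|z|\}$ and $\xi$ is an $L$-sequence.

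The main conceptual step is recognizing that Lemma~\ref{lemma-subh} admits this weakened form, which allows us to apply the hypothesis at a single $m$ chosen after the contradictory $v$ is fixed. A direct diagonalization in $m$ would be natural but problematic, since the exponent $\delta_m$ in \eqref{eq4.4} could in principle tend to zero as $m\to\infty$, leaving no uniform $\delta>0$ for the direct appeal to Lemma~\ref{lemma4}.
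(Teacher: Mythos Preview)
Your argument is correct. The paper itself gives no details, merely stating that Lemma~\ref{lemma4.5} is ``a straightforward corollary to the previous one'' (Lemma~\ref{lemma4}); your write-up is a natural way to make this precise. The two steps --- extracting a pointwise lower bound on $|W_R|$ from the weighted $L^2$ bound, and then feeding this through Lemma~\ref{lemma3} and the proof of Lemma~\ref{lemma-subh} --- are exactly right.

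Your remark about the obstacle to a literal appeal to Lemma~\ref{lemma4} is well taken: since $\delta=\delta(a,m)$ may degenerate as $m\to\infty$, one cannot produce a single $\delta>0$ along a diagonalized sequence. Your workaround, fixing the offending $v\in\mathcal L(F_\xi)$ first and only then choosing $m$ so that the arc of width $\pi/m$ fits inside the neighbourhood $U$ furnished by upper semicontinuity, is the clean way to close the argument and is presumably what the authors had in mind. (One could alternatively diagonalize after passing through Lemma~\ref{lemma3}, since the conclusion $R^{-1}\log|F_\xi(Re(\theta))|\to 1$ no longer carries $\delta$; but this requires splicing thick sequences together, which is fussier than what you wrote.)
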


Curiously enough, assumptions of Lemmas~\ref{lemma4} and~\ref{lemma4.5} impose restrictions
only on relatively short blocks
$ \bigcup_j \bigl[ R_j - R_j^{\frac12+\epsilon}, R_j + R_j^{\frac12+\epsilon}\bigr] $
of elements of the sequence $\xi$. The values attained by $\xi$ off these blocks do not matter.

\section{Proof of Theorems~1 and~2 ($\beta>\frac32$)}

In this part, we put $\xi (n) = e(f(n))$ for some real-valued $f$. Then
\[
W_R(\theta) = \sum_{|n|\le N} e( f(n+R)+n\theta)\, e^{-\frac{n^2}{2R}}\,,
\qquad \sqrt{R\log R} \ll  N \ll R^{\frac12 +\epsilon}\,,
\]
and we are looking for a lower bound for
\[
X_R = \int_{a-\frac1{2m}}^{a+\frac1{2m}} \bigl| W_R(\theta) \bigr|^2
g(m(\theta-a))\, \rd\theta\,, \qquad
a\in [0, 1], \ m\in\bN\,.
\]
The upper bound $X_R \le C\sqrt{R}$ as well as the
matching lower bound in the case when $m=1$ follow from Parseval's theorem.
There are some reasons to expect that if there are no unreasonable
cancellations, then a similar lower bound holds in all scales, that is,
$X_R \ge c(a, m)\sqrt{R}$ for every $m\in\mathbb N$ and every
$a\in [0, 1]$. In the next sections, we justify these expectations.

\subsection{}
The following lemma reduces the lower bound for $X_R$ to upper bounds
for certain Weyl sums. Put
\[
S_T(M_1, M_2) = \sum_{M_1 \le n < M_2} e\bigl( f(n+R)-f(n+R-T) \bigr)\,.
\]
\begin{lemma}\label{lemma5}
There exist positive numerical constants $c$ and $C$ so
that
\[
X_R \ge \frac{c\sqrt{R}}{m} -  Cm\, \sum_{T=1}^{2N} \frac1{T^2}\
\max_{\substack{0< M_2-M_1 \le \sqrt{R},\\|M_1|, |M_2|\le N}}\ \bigl| S_T(M_1, M_2) \bigr|\,.
\]
\end{lemma}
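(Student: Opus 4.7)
The approach is to expand $|W_R(\theta)|^2$ as a double sum, integrate term by term against the bump $g(m(\theta-a))$, and then split the result into a diagonal contribution (yielding the main term $c\sqrt R/m$) and an off-diagonal part controlled by the character sums $S_T$.

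\textbf{Expansion and Fourier reduction.} First I would write
\[
|W_R(\theta)|^2 = \sum_{|n_1|,|n_2|\le N} e\bigl(f(n_1+R)-f(n_2+R)\bigr)\, e\bigl((n_1-n_2)\theta\bigr)\, e^{-(n_1^2+n_2^2)/(2R)},
\]
and, by the substitution $u=m(\theta-a)$, observe that
\[
\int g(m(\theta-a))\, e(T\theta)\, \rd\theta = \frac{e(Ta)}{m}\, \widehat g\Bigl(\tfrac{T}{m}\Bigr), \qquad \widehat g(s):=\int g(u) e(us)\,\rd u.
\]
Since $g\in C^2_0[-\tfrac12,\tfrac12]$, two integrations by parts give $|\widehat g(s)|\le C\min(1, s^{-2})$, so $m^{-1}|\widehat g(T/m)|\le Cm/T^2$ for every $T\ne 0$.

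\textbf{Diagonal lower bound.} The diagonal contribution ($T=n_1-n_2=0$) equals $\frac1m\sum_{|n|\le N}e^{-n^2/R}$. Since $N\gg\sqrt R$, a Riemann-sum comparison with $\int_{\mathbb R}e^{-x^2/R}\,\rd x=\sqrt{\pi R}$ produces a lower bound $\ge c\sqrt R/m$ with an absolute constant $c>0$.

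\textbf{Off-diagonal via blockwise Abel summation.} For $T\ne 0$, renaming $k=n_1$, the inner sum over $n_1$ is
\[
A_T = \sum_k e\bigl(f(k+R)-f(k-T+R)\bigr)\, w_T(k), \qquad w_T(k)=e^{-(k^2+(k-T)^2)/(2R)},
\]
with $k$ running through an interval inside $[-N,N]$; the partial sums of the character factor are exactly $S_T(M_1,M_2)$. The idea is to split the range of $k$ into consecutive blocks $I$ of length $\le\sqrt R$ and apply Abel summation on each. Since $w_T$ is smooth on the scale $\sqrt R$, one has $\operatorname{Var}_I w_T\le C\max_I w_T$ uniformly in $1\le|T|\le 2N$ (this uses $N\ll R^{1/2+\epsilon}$), whence
\[
\Bigl|\sum_{k\in I}w_T(k)\,e\bigl(f(k+R)-f(k-T+R)\bigr)\Bigr|\le C\max_{\substack{0<M_2-M_1\le\sqrt R\\ |M_1|,|M_2|\le N}}|S_T(M_1,M_2)|\cdot \max_I w_T.
\]
The Gaussian profile, written as $w_T(k)=e^{-T^2/(4R)}e^{-(k-T/2)^2/R}$, then gives $\sum_{I}\max_I w_T \le C R^{-1/2}\sum_k w_T(k)+O(1)=O(1)$, uniformly in $T$. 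Combining these estimates with the factor $|{\widehat g(T/m)}|/m\le Cm/T^2$ and summing over $1\le T\le 2N$ (the terms $\pm T$ are paired via evenness of $\widehat g$, and $A_T$ vanishes for $|T|>2N$) yields the stated bound.

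\textbf{Main obstacle.} The delicate step is the blockwise Abel summation at scale $\sqrt R$: one must confirm uniformly in $T\le 2N$ that the total-variation bound for $w_T$ on each block is dominated by its maximum, and that the resulting telescoping over blocks collapses to $O(1)$ thanks to the Gaussian decay transverse to the ridge $k=T/2$. Once this quantitative smoothness is in hand, the reduction to intervals of length $\le\sqrt R$ (rather than the full $2N$) is forced by the block size.
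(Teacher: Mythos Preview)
Your proposal is correct and follows essentially the same route as the paper: expand, integrate against $g$, separate the diagonal, bound $\widehat g(T/m)$ by $O(m^2/T^2)$, and then handle each off-diagonal $T$ by partitioning the $k$-range into blocks of length $\sqrt R$ and applying Abel summation on each block. The paper phrases the block estimate via the monotonicity of $w_T$ on each side of $k=T/2$ together with the explicit bound $\max_{J_k} w_T\le Ce^{-ck^2}$ for the block at distance $k\sqrt R$ from the peak, whereas you phrase it via $\operatorname{Var}_I w_T\le C\max_I w_T$ and $\sum_I\max_I w_T=O(1)$; these are the same observation (indeed your variation bound follows directly from unimodality of $w_T$, so the appeal to $N\ll R^{1/2+\epsilon}$ there is unnecessary).
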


\begin{proof}
We have
\begin{multline*}
X_R =
\frac1{m}\, \int_{-1/2}^{1/2} \Bigl| \sum_{|n|\le N}\, e \Bigl( f(n+R) + na + \frac{n\theta}{m} \Bigr) e^{-n^2/(2R)}\Bigr|^2\,
g(\theta)\, \rd\theta \\
= \frac1{m} \, \sum_{|n|, |n'| \le N}\,
e\bigl( f(n+R)-f(n'+R) + (n-n')a\bigr) e^{-(n^2+ n'^2)/(2R)}\, \widehat{g}\bigl( \frac{n'-n}m \bigr)\,,
\end{multline*}
where $\widehat{g}$ denotes the Fourier transform of $g$ extended by $0$ to $\mathbb R\setminus [-\tfrac12,\tfrac12]$. The diagonal sum ($n=n'$) contributes
\[
\frac1{m}\, \sum_{|n|\le N} e^{-n^2/R}\, \stackrel{N\ge\sqrt{R}}\ge\, \frac{c\sqrt{R}}{m}\,.
\]
We need to estimate from above the contribution of non-diagonal terms
\[
\frac2m \Bigl| \sum_{\substack{|n|, |n'|\le N \\ n'<n}}\, e\bigl( f(n+R)-f(n'+R) + (n-n')a \bigr)\, e^{-(n^2+n'^2)/(2R)}\,
\widehat{g}\bigl( \frac{n'-n}{m} \bigr) \Bigr|\,.
\]
Letting $T=n-n'$ and using that
\[
\widehat{g}\bigl( \frac{n'-n}{m} \bigr) = O\Bigl(\frac{m^2}{(n-n')^2}\Bigr)\,,
\]
we see that the contribution of non-diagonal terms is
\begin{equation}\label{eq5.a}
\le Cm\, \sum_{T=1}^{2N} \frac1{T^2}\,
\Bigl| \sum_{-N+T\le n \le N}\, e\bigl(f(n+R)-f(n+R-T)\bigr)\, e^{-(n^2+(n-T)^2)/(2R)} \Bigr|\,.
\end{equation}

The function $n\mapsto e^{-(n^2+(n-T)^2)/(2R)}$ increases for $-\infty<n\le \frac12 T$
and decreases for $\frac12 T \le n < +\infty$. We consider these two ranges separately.
Then the expression in~\eqref{eq5.a} is
\[
\le Cm\, \sum_{T=1}^{2N} \frac1{T^2}
\Bigl[ \Bigl| \sum_{-N+T\le n < \frac12 T}\, ... \, \Bigr| +
\Bigl| \sum_{\frac12 T \le  n \le N}\, ... \, \Bigr|
\Bigr]\,.
\]
Next, we split the sums in $n$ into blocks of length $\sqrt{R}$
(and several blocks of smaller length that are treated similarly). We set $J_k=[\frac{T}2+(k-1)\sqrt{R},\frac{T}2+k\sqrt{R})$ and
put
\[
Y_{k, T} = \ \sum_{n\in J_k\cap [-N+T,N]
} \
e(f(n+R)-f(n+R-T))   e^{-(n^2+(n-T)^2)/(2R)}\,,
\]
with $|k|\le \frac1{\sqrt R}\, \bigl( N-\frac{T}2 \bigr)+1$. Note that for
$n=\frac{T}2+\la\sqrt{R}$ with $k-1\le\la<k$, we have
\begin{align*}
-\frac1{2R} \bigl( n^2 + (n-T)^2 \bigr) &=
-\frac1{2R} \Bigl( \bigl( \la\sqrt{R} + \frac{T}2 \bigr)^2
+ \bigl( \la\sqrt{R} - \frac{T}2 \bigr)^2 \Bigr) \\
&=-\frac1{2R} \Bigl( 2\la^2 R + \frac12 T^2 \Bigr) < - \la^2 \le -ck^2 +c_1\,.
\end{align*}
Then applying the Abel summation formula to the sum $Y_{k, T}$, we see that
\[
\bigl| Y_{k, T} \bigr| \le C e^{-ck^2}\ \max_{\substack{0<M_2-M_1\le\sqrt R\\ |M_1|, |M_2|\le N} } \
\bigl| S_T (M_1, M_2) \bigr|\,,
\]
and the sum of non-diagonal terms we are estimating is
\begin{multline*}
\le Cm\, \sum_{T=1}^{2N} \frac1{T^2}\
\sum_{|k|\le \frac1{\sqrt R} (N-\frac12 T)+1}\ \bigl| Y_{k, T} \bigr| \\
\le Cm\, \sum_{T=1}^{2N} \frac1{T^2}\ \max_{\substack{0<M_2-M_1\le\sqrt R\\ |M_1|, |M_2|\le N} } \
\bigl| S_T (M_1, M_2) \bigr|\,.
\end{multline*}
This completes the proof of Lemma~\ref{lemma5}.
\end{proof}

Now, Theorems~1 and~2 (for $\beta>\frac32$) will readily follow from the classical
Weyl and van der Corput estimates of exponential sums.

\subsection{Proof of Theorem~\ref{thm:Q}}

First, we fix $T_0=T_0(m)$ so large that
\[
Cm\, \sum_{T>T_0} \frac1{T^2} < \frac12 \frac{c}{m}\,,
\]
where the positive numerical constants $C$ and $c$ are the same
as in the assertion of Lemma~\ref{lemma5}. Then, using the trivial bound
$\bigl| S_T (M_1, M_2) \bigr| \le \sqrt{R}$, we get
\[
X_R > \frac12 \frac{c}{m}\, \sqrt{R} -
Cm T_0 \, \max_{1\le T \le T_0}\, \max_{0<M_2-M_1 \le \sqrt{R}}\,
\bigl| S_T (M_1, M_2) \bigr|\,.
\]

Define
$$
P_T(x) = Q(x)-Q(x-T),
$$
set
$$
\sum_{k=1}^{d-1} p_k x^k=P_T(x),
$$
and observe that at least one of the coefficients $p_k$ is irrational
(if $\ell$ is the maximal index such that the coefficient $q_\ell$ of $Q$ is irrational,
then $p_{\ell-1}$ must be irrational too). Then, by Weyl's theorem~\cite[Section~3]{Weyl} (see also the argument in \cite[pp. 17--18]{Mon}) we have 
\[
\max_{0<M_2-M_1\le M} \, \Bigl| \, \sum_{M_1\le n < M_2}\,
e(P_T(n)) \Bigr| = o(M)\,, \qquad {\rm as}\quad M\to\infty\,.
\]
Hence, for each $T\in \{ 1, ...\, T_0\}$,
\[
\max_{0\le M_2-M_1\le \sqrt{R}}\, \bigl| S_T(M_1, M_2) \bigr| = o(\sqrt{R})\,,
\qquad {\rm as}\quad R\to\infty\,,
\]
and, for $R>R_0(m)$, we have $X_R > c(m) \sqrt{R}$ with
$c(m)>0$. An application of Lemma~\ref{lemma4.5} completes the proof of Theorem~\ref{thm:Q}.
\hfill $\Box$

\subsection{Proof of Theorem~\ref{thm:beta}}
Here, we prove Theorem~\ref{thm:beta} for $\beta>\frac32$. The case
$\beta=\frac32$ will be treated in Section~\ref{sec6}.
Put $ f_{T, R} (x) = (R+x)^\beta - (R+x-T)^\beta $.

\subsubsection{$\frac32 < \beta < 2$}
In this case, we apply the classical summation formula
(see, for instance,~\cite[(2.1.2)]{Titchmarsh})
\begin{multline*}
\sum_{M_1\le n < M_2}\, \phi (n) = \int_{M_1}^{M_2} \phi (x)\, \rd x
+ \int_{M_1}^{M_2} \bigl( x-[x]-\frac12 \bigr) \phi'(x)\, \rd x \\
+ \frac12\, \phi (M_1)
- \frac12\, \phi (M_2)
\end{multline*}
with $\phi (x) = e(f_{T, R}(x))$ and with integer $M_1$ and $M_2$, $|M_1|, |M_2|\le N$.
We get
\begin{multline*}
S_T(M_1, M_2) = \int_{M_1}^{M_2} e(f_{T, R}(x))\, \rd x \\
+ 2\pi\im\, \int_{M_1}^{M_2} \bigl( x-[x]-\frac12 \bigr) f_{T, R}'(x)\, e(f_{T, R}(x)) \rd x
+ O(1)\,.
\end{multline*}
Since the function $f_{T, R}'$ is monotonically decreasing, applying
a classical estimate on integrals of oscillating functions
(see, e.g., \cite[Lemma~4.2]{Titchmarsh} ), and recalling that $M_2\le N$,
we get
\begin{equation}\label{eq5.b}
\Bigl| \int_{M_1}^{M_2} e(f_{T, R}(x))\, \rd x \Bigr| \le \frac4{f_{T, R}'(N)}\,.
\end{equation}
For $R\ge R_0(\beta)$, $|x|\le N$, and $1\le T \le 2N$, we have
\[
f_{T, R}'(x) = \beta \bigl( (R+x)^{\beta-1} - (R+x-T)^{\beta-1} \bigr)
= \bigl( \beta (\beta-1) + o(1) \bigr)\, \frac{T}{R^{2-\beta}}\,,
\]
uniformly in $x\in [-N, N]$. Therefore, the LHS of~\eqref{eq5.b} is $ \le c(\beta) T^{-1}R^{2-\beta}$.

Next,
\begin{multline*}
\Bigl|
\int_{M_1}^{M_2} \bigl( x-[x]-\frac12 \bigr) f_{T, R}'(x)\, e(f_{T, R}(x) \rd x \Bigr| \\
\le (M_2-M_1)\, \max_{|x|\le N} \bigl| f_{T, R}'(x) \bigr| \le c(\beta) N\, \frac{T}{R^{2-\beta}}\,,
\end{multline*}
whence, by Lemma~\ref{lemma5},
\[
X_R \ge \frac{c\sqrt{R}}{m} - C(\beta)m \bigl( R^{2-\beta} + N R^{\beta-2} \log N \bigr) \ge
c(m, \beta) \sqrt{R}\,,
\]
provided that $R\ge R_0(m, \beta)$. In view of Lemma~\ref{lemma4.5}, this proves Theorem~\ref{thm:beta}
in the case $\frac32 < \beta <2$. \hfill $\Box$

\subsubsection{$\beta>2$}
Suppose that
$k<\beta<k+1$ with an integer $k\ge 2$. To estimate
\[
\max_{\substack{M_2-M_1\le N \\
|M_1|, |M_2|\le N}} \bigl| S_T (M_1, M_2) \bigr|\,,
\]
we apply a van der Corput bound~\cite[Theorem~5.13]{Titchmarsh}. Using that
\[
f_{T, R}^{(k)}(x) \simeq_{\beta, k}\, \frac{T}{R^{k+1-\beta}}
\qquad {\rm uniformly\ in} \quad |x|\le N, \quad 1 \le T \le 2N\,,
\]
we get
\[
\bigl| S_T (M_1, M_2) \bigr| \lesssim_{\beta, k}\,
(M_2-M_1) \Bigl( \frac{T}{R^{k+1-\beta}} \Bigr)^{\frac1{2K-2}}
+ (M_2-M_1)^{1-\frac2{K}} \Bigl( \frac{R^{k+1-\beta}}T \Bigr)^{\frac1{2K-2}}
\]
with $K=2^{k-1}$ ($x\simeq_{a}y$ and $x\lesssim_{a}y$ mean, correspondingly, $c_1(a)y\le x\le c_2(a)y$ and $x\le c(a)y$). Since $M_2-M_1 \le N$, the RHS is
\[
\lesssim N\, \bigl( T^{1/2}R^{-\delta} + R^{-\frac1{K} + \frac{k+1-\beta}{2K-2}} \bigr)\,.
\]
with some $\delta>0$. Since $K\ge 2$, we have
\[
k+1-\beta < 1 \le 2 - \frac2{K} = \frac{2K-2}{K}\,.
\]
Therefore,
\[
\max_{\substack{M_2-M_1\le N \\
|M_1|, |M_2|\le N}} \bigl| S_T (M_1, M_2) \bigr|
\lesssim_{\beta, \delta}\, T^{1/2} R^{1/2-\delta/2}\,,
\]
and, by Lemma~\ref{lemma5}, $X_R \ge c(m)\sqrt R$, provided that $R\ge R_0(m, \beta)$.
\hfill $\Box$

\section{Proof of Theorem~2 ($\beta=\frac32$)}
\label{sec6}

In~\cite{CL}, Chen and Littlewood showed that the zeroes of the function
$F_\xi$ with $\xi (n) = e(n^\beta)$, $1<\beta<\frac32$, are asymptotically 
very close to a sequence of points that are regularly distributed on the
spiral given in polar coordinates by $\theta = -\pi + C(\beta)r^{\beta-1} $.
Their analysis yields
that this $\xi$ is an $L$-sequence.
In fact, they gave a detailed proof for another sequence $\xi (n) = e(n (\log n)^{\beta})$
with $\beta>1$, and mention that their arguments work with minor changes in the case we
consider here. Apparently, it is an intriguing open question which part of their analysis can be extended to the case
$\frac32 \le \beta \le 2$ (or, even to $\beta=\frac32$). Nevertheless, as we will show in
this section, a certain combination of their method with our techniques
is strong enough to show that the sequence $\xi (n) = e(n^{3/2})$ is an $L$-sequence.

\medskip
Everywhere in this part,
\[
W_R(\theta) = \sum_{|n|\le N} e\bigl( (n+R)^{3/2} + (n+R)\theta \bigr)\, e^{-n^2/(2R)}
\]
with $N = R^{1/2}\log R + O(1)$; this differs from our definition in Section 4.1 by a unimodular factor $e(R\theta)$.

\subsection{}
Here,
we give an asymptotic estimate of $W_R$, which will yield
Theorem~\ref{thm:beta} in the case $\beta=\frac32$.

\begin{lemma}\label{lemma:main3/2}
For $R\to\infty$,
\begin{multline*}
W_R(\theta) = \frac{2e(1/8+MR)R^{1/4}}{\sqrt{3}} \sum_{|m|\le \frac12\log R}\,
e\bigl( mR -\tfrac4{27}(M + m - \theta)^3 \bigr)\,  e^{-\frac89 (m-\theta)^2} \\+
O\bigl((\log R)^3\bigr)
\end{multline*}
with $M=\frac32 R^{1/2}$, uniformly in $\theta$.
\end{lemma}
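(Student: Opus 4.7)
The plan is to treat $W_R(\theta)$ by Poisson summation followed by the complex saddle-point method---essentially the Chen--Littlewood strategy, implemented carefully enough to reach $\beta=3/2$. Since the Gaussian weight $e^{-n^2/(2R)}$ decays super-polynomially beyond $|n|\gtrsim R^{1/2}\sqrt{\log R}$, one may first drop the truncation $|n|\le N$ (after multiplying by a smooth cut-off that is $1$ on $[-N,N]$ and vanishes well before $x=-R$, so that $(x+R)^{3/2}$ stays real) at a cost negligible compared to $(\log R)^3$. Poisson summation then yields
\[
W_R(\theta)=\sum_{j\in\bZ}I_j,\qquad I_j=\int_{\bR}e^{-x^2/(2R)}\,e\bigl(\Phi_j(x)\bigr)\,\rd x,
\]
with $\Phi_j(x)=(x+R)^{3/2}+(x+R)\theta-jx$.

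The phase $\Phi_j$ has a unique real critical point $x_j=\tfrac{4}{9}(j-\theta)^2-R$, valid for $j>\theta$, at which
\[
\Phi_j(x_j)=jR-\tfrac{4}{27}(j-\theta)^3,\qquad \Phi_j''(x_j)=\frac{9}{8(j-\theta)}.
\]
Parameterising $j=M+m$ with $M=\tfrac{3}{2}R^{1/2}$, one gets $x_j=\tfrac{4}{3}R^{1/2}(m-\theta)+\tfrac{4}{9}(m-\theta)^2$ and hence $x_j^2/(2R)=\tfrac{8}{9}(m-\theta)^2+O(R^{-1/2}(\log R)^3)$ on the range $|m|\le\tfrac{1}{2}\log R$. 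I would then set $u=x-x_j$, absorb the Gaussian exponent together with the second-order Taylor expansion of the phase into a single complex quadratic $B_j u^2-(x_j/R)u$ with $B_j=-\tfrac{1}{2R}+\pi\im\Phi_j''(x_j)$, and evaluate the resulting Gaussian integral via the identity $\int e^{Bu^2+Cu}\,\rd u=\sqrt{\pi/(-B)}\,e^{-C^2/(4B)}$. Since $|\pi\Phi_j''(x_j)|\asymp R^{-1/2}\gg 1/R$, the effective complex Gaussian has width $|u|\lesssim R^{1/4}$, on which the cubic Taylor remainder $\Phi_j'''(x_j)u^3/6\asymp R^{-3/2}|u|^3$ is $O(R^{-3/4})$; one computes $\sqrt{\pi/(-B_j)}=e(1/8)\sqrt{8(j-\theta)/9}\,(1+o(1))=\tfrac{2}{\sqrt{3}}R^{1/4}\,e(1/8)(1+o(1))$ and $e^{-C^2/(4B_j)}=1+o(1)$. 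Assembling the pieces,
\[
I_j=\frac{2R^{1/4}e(1/8+MR)}{\sqrt{3}}\,e\bigl(mR-\tfrac{4}{27}(M+m-\theta)^3\bigr)\,e^{-\frac{8}{9}(m-\theta)^2}\,\bigl(1+o(1)\bigr),
\]
and summing over the $O(\log R)$ relevant $j$'s reproduces the main term of the lemma.

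It remains to dispose of the other terms. When $\tfrac{1}{2}\log R<|m|\lesssim\log R$ the saddle still lies in the integration region but $e^{-x_j^2/(2R)}\le e^{-c(\log R)^2}$ is super-polynomially small. For $j\le\theta$ no real saddle exists; for $|j-M|\gg\log R$ the saddle leaves the effective Gaussian support. In both of these non-stationary cases $|\Phi_j'|\gtrsim|j-M|$ on the Gaussian bulk, and repeated integration by parts (the Gaussian decay killing boundary terms) gives $|I_j|\lesssim(1+|j-M|)^{-2}$, so the total contribution from such $j$ is $O(1)$.

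The main obstacle I anticipate is the complex stationary-phase step: because the quadratic $B_j u^2$ has a small but non-zero real part coming from the Gaussian, one must justify a contour deformation in the Gaussian integral and track the cubic and higher Taylor errors of $\Phi_j$ uniformly in $|m|\le\tfrac{1}{2}\log R$. Once this is done, the main-term identification and error accounting are routine, and the remainder stays comfortably inside the stated $O((\log R)^3)$.
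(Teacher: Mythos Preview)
Your approach---Poisson summation followed by a saddle-point analysis of the resulting integrals $I_j$---is exactly the paper's strategy, and your identification of the main term is correct. The paper carries out the saddle-point step by deforming to an explicit Fresnel contour rotated by $\pi/4$ through the real critical point $z_0=\tfrac43\sigma R^{1/2}$; this is precisely what justifies your ``complex Gaussian integral'' formula when $\operatorname{Re}B_j<0$, so the two implementations coincide.

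There is one place where your sketch is too optimistic. For the tail $|j-M|>\log R$ you claim that repeated integration by parts yields $|I_j|\lesssim(1+|j-M|)^{-2}$, summing to $O(1)$. But two integrations by parts, with a cut-off supported on $[-(N{+}1),N{+}1]$, give
\[
|I_j|\ \lesssim\ \frac{1}{|j-M|^{2}}\int_{-(N+1)}^{N+1} |\lambda(t)|\,e^{-t^{2}/(2R)}\,\rd t,
\]
and the crude bound $|\lambda|\le C$ (coming from $\chi''=O(1)$) produces only $\sqrt{R}/|j-M|^{2}$, which sums over $|j-M|>\log R$ to $\sqrt{R}/\log R$---far too large. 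The paper repairs this by, \emph{after} the two integrations by parts, shifting the contour by $\pm R^{1/2}$ in the imaginary direction (the sign chosen according to the sign of $j-M$); this brings in an extra factor $e^{-c(\log R)^{2}}$ and yields $|I_j|\le e^{-c(\log R)^{2}}/|j-M|^{2}$. An alternative that stays on the real line is to note that on $[-N,N]$ (where $\chi'=\chi''=0$) one actually has $|\lambda|=O(R^{-1}(\log R)^{2})$, while on $N\le|t|\le N{+}1$ the Gaussian already supplies $e^{-c(\log R)^{2}}$; either observation makes the tail summable. Once this detail is supplied, the rest of your outline goes through.
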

It is worth mentioning that, in the case $1<\beta<\frac32$ considered by Chen and Littlewood,
at most two terms contribute to the corresponding sum on the RHS. This was crucial for
finding the asymptotic locations of zeroes of $F$.

\medskip We split the proof of Lemma~\ref{lemma:main3/2} into several parts.

\subsubsection{}
Take $\chi\in C^\infty_0 [0, +\infty)$ with $\chi\ge 0$,
\[
\chi(t) =
\begin{cases} 1, \quad 0\le x\le N,\\
0, \quad x\ge N+1,
\end{cases}
\]
and set $\chi(z)=\chi(|z|)$, and
\[
u(t) = \chi(t-R) e(t^{3/2}+t\theta) e^{-(t-R)^2/(2R)},\qquad t\in\mathbb R.
\]
Then
\[
W_R(\theta) = \sum_{n\in\bZ} u (n) = \sum_{m\in \bZ} \widehat{u}(m) \qquad
({\rm the\ Poisson\ summation}),
\]
where
\begin{align*}
\widehat u(m) &=
\int_{\mathbb R}u(t)e(-mt)\, \rd t \\
&= e(mR) \int_{\mathbb R}\chi(t)e((t+R)^{3/2}-(m-\theta)(t+R) 
)e^{-t^2/(2R)}\, \rd t \\
&= e(mR) \int_{\mathbb R}\chi(t)e(\psi_m(t))e^{-t^2/(2R)}\, \rd t\,,
\end{align*}
where $ \psi_m(t) = (t+R)^{3/2}-\mu(t+R) $ is ``a phase function'',
and $\mu = m-\theta$ is ``a distorted $m$''. Put
\[
I_m = \int_{\mathbb R}\chi(t)e(\psi_m(t))e^{-t^2/(2R)}\, \rd t\,.
\]

Estimating the integrals $I_m$,
we set $M=\tfrac32R^{1/2}$ and consider separately three cases:
$|m-M|> \log R$, $\frac12 \log R < |m-M| \le \log R$, and $|m-M|\le \frac12 \log R$.
In what follows, we extend $\psi_m$ to an analytic function in $\{z:{\rm Re\,} z>-R\}$ and 
use the Taylor approximation of $\psi_m$ 
in the disk $\{z: |z|\le 10N\}$:
\begin{multline} \label{eq:Taylor-psi}
\psi_m (z) = - \tfrac12 R^{3/2} -\sigma R - \sigma z + \tfrac38 z^2 R^{-1/2} - \tfrac1{16} z^3 R^{-3/2}  \\
+ O\bigl( R^{-1/2} (\log R)^4  \bigr),
\end{multline}
where $\sigma = \mu - M$.

\subsubsection{}
We start with the case $|m-M|>\log R$. Then the derivative of the phase $\psi_m$ is large on the
support of $\chi$, see \eqref{x1} below. We show that for $R\ge R_0$,
\begin{equation}\label{eq:integral-1}
|I_m| \le \frac{e^{-c (\log R)^2}}{(m-M)^2}\,.
\end{equation}

Integrating twice by parts we obtain
\begin{multline*}
I_m = \frac{1}{(2\pi\im)^2}\,
\int_{\mathbb R}
\Bigl[\frac{1}{\psi_m'}\Bigl(\frac{\chi e^{-t^2/(2R)}}{\psi_m'}\Bigr)'\Bigr]'(t)e(\psi_m(t))\, \rd t\\
=\frac{1}{(2\pi \im)^2}\, \int_{\mathbb R} \frac{\lambda(t)}{\psi_m'^2(t)} e(\psi_m(t))e^{-t^2/(2R)}\, \rd t,
\end{multline*}
where
$$
\lambda=\chi''-\frac{3\chi'\psi_m''}{\psi_m'}
- \frac{2\chi' t}{R} - \frac{\chi}{R} + \frac{\chi t^2}{R^2} +
\frac{3\chi \psi_m'' t}{\psi_m' R} - \frac{\chi \psi_m'''}{\psi_m'}
+3\Bigl(\frac{\psi_m''}{\psi_m'}\Bigr)^2\chi.
$$
For $|z| \le N+1$ and $R>R_0$, we have
\begin{multline}
|\psi_m'(z)| = \Bigl| \tfrac32 (R+z)^{1/2} - \tfrac32 R^{1/2} - \sigma \Bigr| \\
\ge |\sigma| - \tfrac32 R^{1/2} \Bigl[ \sqrt{1+\frac{N+1}R} - 1\Bigr]
\ge |\sigma| - (\tfrac34 + o(1)) \log R \ge \tfrac15 |\sigma|\,.\label{x1}
\end{multline}
Since the functions $\psi_m''$, $\psi_m'''$ are bounded on
the disk $\{z\colon |z| \le N+1\}$,
we conclude that $\lambda$ is bounded  
on the same disk. 

Next, we set
\begin{align*}
H(z) &= 2\pi \im \psi_m(z) - \frac{z^2}{2R}\\
&=2\pi \im (R+z)^{3/2}-2\pi \im \Bigl(\frac 32 R^{1/2}+\sigma\Bigr)(z+R) - \frac{z^2}{2R}.
\end{align*}
Then 
\[
I_m = -\frac1{4\pi^2}\, \int_{\bR} \frac{\lambda(t)}{\psi_m'^2(t)}\, e^{H(t)}\, \rd t\,.
\]
Using the Taylor expansion~\eqref{eq:Taylor-psi}, we get 
\begin{multline}
|\exp H(x +\im y)|  
\\
\le C\exp\Bigl(2\pi \sigma y-\frac {3\pi}2 xy R^{-1/2}+\frac{3\pi}{8}x^2yR^{-3/2}-\frac{\pi}{8}y^3R^{-3/2}-\frac{x^2}{2R}+\frac{y^2}{2R}\Bigr)  
\\
= C\exp\Bigl[\Bigl(2\pi \sigma -\frac {3\pi}2 x R^{-1/2}+o(1)\Bigr)y -\frac{x^2}{2R}+\frac{y^2}{2R}\Bigr],\,\,\, |x+iy|\le 3N.
\label{eq1}
\end{multline}
Now, 
\[
4\pi^2|I_m|
\le\Bigl|\int_{\frac N2\le |x|\le N+1}\Bigr|+\Bigl|\int_{|x|\le N/2}\Bigr|\,.
\]
For $|x|\ge \frac12 N$, $|\exp H(x)| \le C \exp[ -x^2/(2R) ] \le \exp [-c (\log R)^2] $.
Thus, the first integral does not exceed
\[
C N |\sigma|^{-2} e^{-c(\log R)^2} \le \frac{e^{-c_1(\log R)^2}}{(m-M)^2}\,.
\]
In the second integral, instead of integrating over the interval $[-\frac12 N, \frac12 N]$,
we integrate over the contour $\Gamma_\sigma$ as on Figure 1.  

\begin{center}
\begin{figure}[ht]
\begin{picture}(390,170)
\thicklines
\put(20,120){\vector(1,0){280}}
\put(330,118){if $\sigma<0$}
\put(164,158){$\Gamma_\sigma$}
\put(261,107){$N/2$}
\put(34,107){$-N/2$}
\put(50,120){\circle*{3}}
\put(160,120){\circle*{3}}
\put(270,120){\circle*{3}}
\put(161,107){$0$}
\put(115,145){\vector(0,1){5}}
\put(115,125){\vector(0,-1){5}}
\put(115,120){\line(0,1){30}}
\put(86,133){$R^{1/2}$}
\put(155,150){\vector(1,0){10}}
\put(50,120){\vector(0,1){20}}
\put(270,150){\vector(0,-1){20}}
\put(50,120){\line(0,1){30}}
\put(270,120){\line(0,1){30}}
\put(50,150){\line(1,0){220}}

\thicklines
\put(20,50){\vector(1,0){280}}
\put(330,48){if $\sigma>0$}
\put(164,6){$\Gamma_\sigma$}
\put(86,31){$R^{1/2}$}
\put(261,57){$N/2$}
\put(34,57){$-N/2$}
\put(50,50){\circle*{3}}
\put(160,50){\circle*{3}}
\put(270,50){\circle*{3}}
\put(161,57){$0$}
\put(155,20){\vector(1,0){10}}
\put(115,45){\vector(0,1){5}}
\put(115,25){\vector(0,-1){5}}
\put(50,50){\line(0,-1){30}}
\put(270,50){\line(0,-1){30}}
\put(50,20){\line(1,0){220}}
\put(115,50){\line(0,-1){30}}
\put(50,50){\vector(0,-1){20}}
\put(270,20){\vector(0,1){20}}

\end{picture}
\caption*{Figure 1}
\end{figure}
\end{center}

Estimate~\eqref{eq1} shows that
$$
|e^{H(x+\im y)}|\le
\begin{cases}
C\, e^{-cR^{1/2}\log R}, \ &z\in\Gamma_\sigma,\, |y|=R^{1/2},\\
C\, e^{-c(\log R)^2}, \   &z\in\Gamma_\sigma,\,  |x|=N/2.
\end{cases}
$$
Therefore, the integral over the contour $\Gamma_\sigma$ is also bounded by
$$
(m-M)^{-2}\, e^{-c(\log R)^2},
$$
and estimate~\eqref{eq:integral-1} follows.

\subsubsection{}
Now, $\frac12\log R< |m-M|\le\log R$. This case is similar to the previous one but is somewhat shorter
since there is no need to integrate by parts (instead of \eqref{eq:integral-1} we check a simpler estimate \eqref{x2}). We again split the integral into two parts:
\begin{multline*}
|I_m|
= \Bigl|\int_{\mathbb R}\chi(t)e(\psi_m(t))e^{-t^2/(2R)}\, \rd t\Bigr|
\le \Bigl|\int_{\frac N2\le |x|\le N+1}\Bigr|+\Bigl|\int_{|x|\le N/2}\Bigr|\\ \le C e^{-c(\log R)^2}+
\Bigl| \int_{\Gamma_\sigma} e^{H(z)}\, \rd z \Bigr|,
\end{multline*}
and, arguing as above, we obtain
\begin{equation}
| I_m| \le e^{-c (\log R)^2}\,.
\label{x2}
\end{equation}

\subsubsection{}
At last, we deal with $I_m$ such that $|m-M|\le \frac12\log R$.
This case requires a saddle point approximation.
Set
\[
z_0 = \frac43 \sigma R^{1/2}, \qquad
A_0 = -\frac89 \sigma^2 - \frac{8\pi\im}{27}\mu^3.
\]
Then, using the Taylor approximations~\eqref{eq:Taylor-psi}, we get
\begin{align*}
H(z_0)&=A_0 + O\bigl( R^{-1/2}(\log R)^4 \bigr),\\
H'(z_0)&=O\bigl(R^{-1/2} (\log R)^2 \bigr),\\
H''(z)&=\frac{3\pi\im}{2}R^{-1/2}+O(R^{-1}\log R), \qquad |z|<5N\,.
\end{align*}

Now,
$$
I_m = \int_{|x|\le N} + \int_{N\le |x|\le N+1}   =
\int_{\Lambda_\sigma}e^{H(z)}\, \rd z   + O\bigl( e^{-c(\log R)^2} \bigr)\,,
$$
where the Fresnel-type contour $\Lambda_\sigma$ is as on Figure~2.

\begin{center}
\begin{figure}[ht]
\begin{picture}(390,300)
\thicklines
\put(20,180){\vector(1,0){360}}
\put(200,180){\circle*{3}}
\put(230,180){\circle*{3}}
\put(70,180){\circle*{3}}
\put(330,180){\circle*{3}}
\put(203,187){$0$}
\put(233,170){$z_0$}
\put(195,145){\vector(1,1){6}}
\put(204,134){$\Lambda_\sigma$}
\put(56,188){$-N$}
\put(327,166){$N$}
\put(254,189){$\pi/4$}
\put(70,20){\line(1,1){260}}
\put(70,20){\line(0,1){160}}
\put(330,180){\line(0,1){100}}
\put(230,180){\arc[0,45]{19}}
\put(70,150){\vector(0,-1){50}}
\put(330,250){\vector(0,-1){30}}
\end{picture}
\caption*{Figure 2}
\end{figure}
\end{center}

Let $\Lambda_\sigma^0$ be the vertical part of $\Lambda_\sigma$, and $\Lambda_\sigma^1$ be the rest.
Then by estimate~\eqref{eq1} we have
$$
\Bigl| \int_{\Lambda^0_\sigma}e^{H(z)}\, \rd z\Bigr|\le e^{-c(\log R)^2}.
$$
Hence,
\[
I_m = \int_{\Lambda^1_\sigma}e^{H(z)}\, \rd z + O\bigl( e^{-c(\log R)^2} \bigr)\,.
\]
Furthermore,
$$
\int_{\Lambda^1_\sigma}e^{H(z)}\rd z = e^{\im\pi/4}\,
\int_{-R^{1/2}(NR^{-1/2}+4\sigma/3)}^{R^{1/2}(NR^{-1/2}-4\sigma/3)}e^{H(te^{\im\pi/4}+z_0)}\, \rd t\,.
$$
For $|t|\le 2N$ we have
\begin{multline*}
H(te^{\im\pi/4}+z_0)\\
= A_0+O\Bigl(\frac{(\log R)^4}{R^{1/2}}\Bigr)
+O\Bigl(t\frac{(\log R)^2}{R^{1/2}}\Bigr)-\frac{3\pi}{4}t^2R^{-1/2}+O\Bigl(t^2\frac{\log R}{R}\Bigr),
\end{multline*}
and hence,
\begin{align*}
\int_{\Lambda^1_\sigma}\, &e^{H(z)}\, \rd z \\
&= e^{\im\pi/4+A_0}\, R^{1/4}\!\!\!\!\!\int\limits_{R^{1/4}(-\log R-4\sigma/3+o(1))}^{R^{1/4}(\log R-4\sigma/3+o(1))}\!\!\!\!
\exp\Bigl[-\frac{3\pi}{4}t^2\\
&\qquad\qquad\qquad\qquad\qquad + O\Bigl(\frac{(\log R)^4}{R^{1/2}} + t\frac{(\log R)^2}{R^{1/4}}+t^2\frac{\log R}{R^{1/2}}\Bigr)\Bigr]\, \rd t \\
&= e^{\im\pi/4+A_0}\, R^{1/4}
\Bigl(\int_{\mathbb R}\exp\bigl[-\frac{3\pi}{4}t^2\bigr]dt+O\Bigl(\frac{(\log R)^3}{R^{1/4}}\Bigr)\Bigr)\\
&= \frac{2}{\sqrt 3}R^{1/4}e\bigl(\tfrac 18-\tfrac{4}{27}\mu^3\bigr)e^{-\frac 89 \sigma^2}
+O\bigl((\log R)^3\bigr).
\end{align*}
Finally, for $|m-M|\le \frac12 \log R$, we get
\begin{align*}
I_m &= \frac{2}{\sqrt 3}R^{1/4}e\bigl(\tfrac 18-\tfrac{4}{27}\mu^3\bigr)
e^{-\frac 89 \sigma^2}
+ O\bigl((\log R)^3\bigr) + O\bigl( e^{-c(\log R)^2} \bigr) \\
&= \frac{2}{\sqrt 3}R^{1/4}e\bigl(\tfrac 18-\tfrac{4}{27}(m-\theta)^3\bigr)e^{-\frac 89
(m-M-\theta)^2}
+ O\bigl((\log R)^3\bigr).
\end{align*}

\subsubsection{}
Thus,
\begin{multline*}
W_R(\theta) =\sum_{m\in\mathbb Z}\widehat{u}(m+M)\\= \Bigl[ \sum_{|m|\le \frac12 \log R} + \sum_{\frac12 \log R < m \le \log R} + \sum_{|m|> \log R} \Bigl]
e ((M+m)R) I_{m+M} \\
= \frac{2e(1/8 + MR)R^{1/4}}{\sqrt 3}
\sum_{|m|\le \frac12 \log R} e\bigl(mR-\tfrac{4}{27}(M+m-\theta)^3\bigr)e^{-\frac 89 (m-\theta)^2}\\
+ O\bigl((\log R)^3\bigr),
\end{multline*}
proving Lemma~\ref{lemma:main3/2}. \hfill $\Box$

\subsection{}
At last, we are able to prove Theorem~\ref{thm:beta} for $\beta=\frac32$.
Consider the shifts $W_R(\theta+t)$ with $0\le t\le \frac98 M^{-1}$. We have
\begin{multline*}
W_R(\theta+t) \\= \frac{2e(1/8 + M R)R^{1/4}}{\sqrt 3}  
\sum_{|m|\le \frac12 \log R} e\bigl(mR - \tfrac{4}{27}(M+m-\theta-t)^3\bigr)e^{-\frac 89 (m-\theta-t)^2}\\
+  O\bigl((\log R)^3\bigr).
\end{multline*}
Furthermore, since $|t|=O\bigl( M^{-1} \bigr)$ with $M=\frac32 R^{1/2}$, we have
\begin{multline*}
e\bigl(-\tfrac{4}{27}(M+m-\theta-t)^3\bigr)\,e^{-\frac 89 (m-\theta-t)^2} \\
 = e\bigl(-\tfrac{4}{27}(M+m-\theta)^3 + \tfrac49 (M^2-2M\theta)t + \tfrac89 Mmt\bigr)\, e^{-\frac 89 (m-\theta)^2}\\
+  O\Bigl(\frac{(\log R)^2}{R^{1/2}}\Bigr).
\end{multline*}
Therefore,
\begin{multline*}
W_R(\theta+t) \\= K R^{1/4}
\sum_{|m|\le \frac12 \log R} e\bigl(\tfrac89 M t m + mR -\tfrac{4}{27}(M+m-\theta)^3\bigr)e^{-\frac 89 (m-\theta)^2}\\
+  O\bigl((\log R)^3\bigr)
\end{multline*}
with
\[
K = K(M, \theta, t) = \frac{2e\bigl( \tfrac18 + MR + \tfrac49 (M^2-2M\theta)t\bigr)}{\sqrt 3}\,.
\]

Now, notice that the sum on the RHS is a Fourier series in the variable $\frac 89 Mt$. Hence, by Parseval's theorem,
there exists $t\in [0, \frac98 M^{-1}]$ so that
\[
| W_R(\theta+t) | \ge \frac{2 R^{1/4}}{\sqrt3 }\,
\Bigl(\sum_{|m|\le \frac 12\log R}e^{-\frac {16}9 (m-\theta)^2}\Bigr)^{1/2} -   O\bigl((\log R)^3\bigr)
\ge CR^{1/4}
\]
with a positive numerical constant $C$. Applying Lemma~\ref{lemma4},
we finish off the proof. \hfill $\Box$

\section{Wide-sense stationary sequences}\label{sect:wide-stat}

Here, we prove several simple  
lemmas pertaining to the case when $\xi\colon \bZ_+ \to \bC$
is a {\em wide-sense stationary sequence}, that is, $\bE |\xi (n)|^2 < \infty$ for every $n$,
and $\bE \xi (n)$ and $\bE \bigl\{ \xi (n) \overline{\xi (n+m)} \bigr\} $ do not depend on $n$.
We also always assume that $\xi$ is not the zero sequence.
By $\rho$ we denote {\em the spectral measure} of such a sequence $\xi$. That is,
$\rho$ is a finite non-negative measure on the unit circle $\bT$ such that
\[
\bE \bigl[ \xi (n_1) \overline{\xi (n_2)} \bigr] = \widehat\rho (n_2-n_1)\,,
\]
and by $\sigma (\xi)$ we denote the spectrum of $\xi$, that is, the closed support of the spectral
measure $\rho$. In what follows, by $\sigma^*$ we always denote the reflection of the spectrum
$\sigma$ in the real axis.

Observe that if $\xi$ is a wide-sense stationary sequence then, almost surely,
$F_\xi$ is an entire function of exponential type at most one. Indeed, for every $\epsilon>0$,
\[
\bP \bigl\{ |\xi (n)| > (1+\epsilon)^n \bigr\} \le (1+\epsilon)^{-2n}\, \bE |\xi (n)|^2\,,
\]
whence, by the Borel--Cantelli lemma,
\[
\limsup_{n\to\infty} |\xi (n)|^{1/n} \le 1, \qquad {\rm almost\ surely},
\]
which is equivalent to the inequality
$ |F_\xi (z)| \le C(\epsilon) e^{(1+\epsilon)|z|} $ valid for every $z\in\bC$ and every $\epsilon>0$.

\subsection{}
First, we compute the variance of $F_\xi$ in terms of the spectral measure $\rho$.

\begin{lemma}\label{lemma8.1}
Suppose $\xi$ is a wide-sense stationary sequence. Then
\begin{equation}\label{eq:8.1}
\bE \bigl| F_\xi (re^{\im\theta}) \bigr|^2 = \int_{-\pi}^\pi e^{2r\cos (\theta+t)}\, \rd\rho (t)\,,
\end{equation}
and
\begin{equation}\label{eq:8.1-asympt}
\log \bE \bigl| F_\xi (re^{\im\theta}) \bigr|^2 =
2r h_{\sigma^*}(\theta) + o(r)\,, \qquad r\to\infty\,.
\end{equation}
\end{lemma}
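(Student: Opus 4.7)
\textbf{Plan for Lemma~\ref{lemma8.1}.} The proof splits into a direct computation yielding the integral formula~\eqref{eq:8.1}, followed by a Laplace-type asymptotic yielding~\eqref{eq:8.1-asympt}.

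For the identity~\eqref{eq:8.1}, I would expand the square and swap expectation with the double Taylor sum. Writing
\[
|F_\xi(re^{\im\theta})|^2 = \sum_{n,m\ge 0} \xi(n)\,\overline{\xi(m)}\, \frac{r^{n+m} e^{\im(n-m)\theta}}{n!\,m!}\,,
\]
the interchange with $\bE$ is justified by Fubini--Tonelli applied to the non-negative double sum, using the Cauchy--Schwarz bound $\bE|\xi(n)||\xi(m)|\le \bE|\xi(0)|^2$ (valid by wide-sense stationarity) to obtain a finite majorant $\bE|\xi(0)|^2\cdot e^{2r}$. Inserting the spectral identity $\bE[\xi(n)\overline{\xi(m)}]=\widehat\rho(m-n)=\int_{-\pi}^{\pi} e^{\im(n-m)t}\,\rd\rho(t)$ and swapping once more (positivity again), the sums factor:
\[
\sum_{n\ge 0} \frac{(re^{\im(\theta+t)})^n}{n!}\cdot\sum_{m\ge 0} \frac{(re^{-\im(\theta+t)})^m}{m!} = \bigl|\exp(re^{\im(\theta+t)})\bigr|^2 = e^{2r\cos(\theta+t)}\,,
\]
and integration against $\rd\rho(t)$ gives~\eqref{eq:8.1}.

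For~\eqref{eq:8.1-asympt}, I would observe that the support function satisfies $h_{\sigma^*}(\theta)=\sup_{t\in\sigma}\cos(\theta+t)$ (parametrize $\lambda=e^{-\im t}\in\sigma^*$ so that $\mathrm{Re}(e^{\im\theta}\bar\lambda)=\cos(\theta+t)$). Since $\mathrm{spt}(\rho)=\sigma$, the trivial upper bound is
\[
\int_{-\pi}^\pi e^{2r\cos(\theta+t)}\,\rd\rho(t) \le \rho(\bT)\, e^{2r h_{\sigma^*}(\theta)}\,.
\]
For the matching lower bound, by compactness of $\sigma$ and continuity of $t\mapsto\cos(\theta+t)$, the supremum is attained at some $t_0\in\sigma$. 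For any $\epsilon>0$, pick an open neighbourhood $U$ of $t_0$ on which $\cos(\theta+t)>h_{\sigma^*}(\theta)-\epsilon$; since $t_0\in\mathrm{spt}(\rho)$, we have $\rho(U)>0$, so
\[
\int_{-\pi}^\pi e^{2r\cos(\theta+t)}\,\rd\rho(t) \ge \rho(U)\, e^{2r(h_{\sigma^*}(\theta)-\epsilon)}\,.
\]
Combining the two bounds, taking logarithms, dividing by $r$, letting $r\to\infty$, and finally $\epsilon\to 0$ gives~\eqref{eq:8.1-asympt}.

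Neither step looks like a real obstacle: the first is bookkeeping with Fubini, and the second is standard Laplace asymptotics. The only mild subtlety is verifying the integrability needed for the swap of $\bE$ with the double sum, but wide-sense stationarity provides a uniform bound on $\bE|\xi(n)\xi(m)|$ that makes this routine.
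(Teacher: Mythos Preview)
Your proposal is correct and follows essentially the same route as the paper: expand the square, insert the spectral representation $\bE[\xi(n)\overline{\xi(m)}]=\widehat\rho(m-n)$, factor the double sum into two exponential series, and then read off the Laplace asymptotic from $h_{\sigma^*}(\theta)=\max_{t\in\operatorname{spt}(\rho)}\cos(\theta+t)$. The paper is terser on both the Fubini justification and the asymptotic step (it simply says ``we readily get''), so your added detail on the upper/lower bound argument is a welcome elaboration rather than a different approach.
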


\begin{proof}
We have
\begin{align*}
\bE \bigl| F_\xi (re^{\im\theta}) \bigr|^2
&=
\sum_{n_1, n_2 \ge 0} \bE \bigl[ \xi (n_1) \overline{\xi (n_2)} \bigr]\, e^{\im (n_1-n_2)\theta}\,
\frac{r^{n_1+n_2}}{n_1! n_2!} \\
&= \sum_{n_1, n_2 \ge 0} \Bigl[ \int_{-\pi}^\pi e^{- \im (n_2-n_1)t}\, \rd\rho (t) \Bigr]\, e^{\im (n_1-n_2)\theta}\,
\frac{r^{n_1+n_2}}{n_1! n_2!} \\
&= \int_{-\pi}^\pi
\Bigl[ \sum_{n_1, n_2 \ge 0} e^{\im n_1(\theta+t)}\, \frac{r^{n_1}}{n_1!} \cdot
e^{- \im n_2 (\theta+t)}\, \frac{r^{n_2}}{n_2!} \Bigr]\, \rd\rho (t) \\
&= \int_{-\pi}^\pi e^{r\bigl[ e^{\im (\theta+t)} + e^{- \im (\theta+t)} \bigr]}\, \rd\rho (t) \\
&= \int_{-\pi}^\pi e^{2r \cos  (\theta+t)}\, \rd\rho (t)\,,
\end{align*}
proving~\eqref{eq:8.1}. Now, recalling the  definition of
the supporting function
\[
h_{\sigma^*} (\theta) = \, \max_{t\in {\rm spt\, }(\rho)}\, \cos (\theta + t)\,,
\]
we readily get asymptotics~\eqref{eq:8.1-asympt}.
\end{proof}

\subsection{}
As a straightforward consequence of the previous lemma, we get
\begin{lemma}\label{lemma8.2}
Suppose $\xi$ is a wide-sense stationary sequence. Then, almost surely,
\[
h^{F_\xi} (\theta) \le h_{\sigma^*} (\theta)\,, \qquad \theta\in [-\pi, \pi]\,.
\]
\end{lemma}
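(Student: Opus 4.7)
The plan is to upgrade the expectation identity of Lemma~\ref{lemma8.1} to an almost-sure pointwise bound on the indicator $h^{F_\xi}$ via Chebyshev's inequality and the Borel--Cantelli lemma, with subharmonicity used to pass from integer radii to all real radii and continuity of $h^{F_\xi}$ used to pass from a countable dense set of angles to all angles.

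Fix $\theta\in[-\pi,\pi]$ and $\epsilon>0$. From Lemma~\ref{lemma8.1} and Chebyshev's inequality,
\[
\bP\bigl(|F_\xi(ke^{\im\theta})|^2 > e^{2k(h_{\sigma^*}(\theta)+\epsilon)}\bigr) \le e^{-2k\epsilon+o(k)}, \qquad k\in\bN,
\]
which is summable; Borel--Cantelli then yields, almost surely, $\limsup_{k\in\bN}\, k^{-1}\log|F_\xi(ke^{\im\theta})| \le h_{\sigma^*}(\theta)+\epsilon$. To upgrade to arbitrary real radii, I invoke the subharmonicity of $|F_\xi|^2$: for $r\in[k,k+1]$, $B(re^{\im\theta},1)\subset B(ke^{\im\theta},2)$, hence
\[
|F_\xi(re^{\im\theta})|^2 \le \frac{1}{\pi}\int_{B(re^{\im\theta},1)}|F_\xi|^2\,\rd A \le \frac{1}{\pi}\int_{B(ke^{\im\theta},2)}|F_\xi|^2\,\rd A.
\]
The pointwise estimate $\bE|F_\xi(w)|^2\le \rho(\bT)\,e^{2|w|h_{\sigma^*}(\arg w)}$ is immediate from~\eqref{eq:8.1}, and since $|w|h_{\sigma^*}(\arg w)\le kh_{\sigma^*}(\theta)+O(1)$ on $B(ke^{\im\theta},2)$ by the Lipschitz continuity of $H_{\sigma^*}$, Fubini gives $\bE\int_{B(ke^{\im\theta},2)}|F_\xi|^2\,\rd A\lesssim e^{2kh_{\sigma^*}(\theta)}$. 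A second application of Chebyshev plus Borel--Cantelli to this $L^2$-integral along integer $k$ produces, almost surely,
\[
\limsup_{r\to\infty}\frac{\log|F_\xi(re^{\im\theta})|}{r}\le h_{\sigma^*}(\theta)+\epsilon.
\]

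Intersecting the countably many null sets indexed by $\epsilon$ in a sequence tending to zero and $\theta$ in a countable dense subset $D$ of $[-\pi,\pi]$ yields, with probability one, $h^{F_\xi}(\theta)\le h_{\sigma^*}(\theta)$ for every $\theta\in D$. The continuity of the Phragm\'en--Lindel\"of indicator $h^{F_\xi}$ recalled in Section~3 and the continuity of the support function $h_{\sigma^*}$ then extend the inequality to every $\theta\in[-\pi,\pi]$, finishing the proof. The main delicate point is the passage from integer to real radii, which is handled by running Borel--Cantelli on the $L^2$-integral over a \emph{fixed-size} (non-scaling) disk, so that the perturbation of $h_{\sigma^*}$ in the exponent is only an $O(1)$ additive factor; the rest is routine probabilistic bookkeeping together with the continuity of the indicator.
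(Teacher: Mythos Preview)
Your argument is correct, and the overall architecture (Chebyshev $+$ Borel--Cantelli along a discrete sequence of radii, then pass to all radii, then from a countable dense set of angles to all angles by continuity) matches the paper's. The one place you diverge is the passage from integer to real radii. The paper quotes a classical result of P\'olya and Vl.~Bernstein (see \cite[Theorem~1.3.5]{Bieberbach}): for an entire function of exponential type less than $\pi/\kappa$, the radial $\limsup$ along the full ray equals the $\limsup$ along the arithmetic progression $\kappa\bN$. You replace this black box by a direct subharmonicity argument --- bounding $|F_\xi(re^{\im\theta})|^2$ by its average over a fixed-radius disk, controlling $\bE\int_{B(ke^{\im\theta},2)}|F_\xi|^2$ via Fubini and the Lipschitz bound for $H_{\sigma^*}$, and running Borel--Cantelli a second time on these integrals. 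Your route is more self-contained (it avoids importing a complex-analysis theorem) at the cost of a few extra lines; the paper's is shorter but relies on a reference. Note, incidentally, that your first Borel--Cantelli pass on pointwise values at integers is superfluous: the second pass on the disk integrals already delivers the conclusion.
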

In other words, the indicator diagram $I^{F_\xi}$
of $F_\xi$ is contained in the closed convex hull
of the spectrum $\sigma (\xi)$ reflected in the real axis.

\begin{proof}
Using~\eqref{eq:8.1-asympt} and Chebyshev's inequality, we see that, for every $\epsilon>0$,
\begin{multline*}
\bP \bigl\{ \log |F_\xi (re^{\im\theta})| > ( h_{\sigma^*}(\theta) + \epsilon) r \bigr\}
= \bP \bigl\{ |F_\xi (re^{\im\theta})|^2 > e^{2 ( h_{\sigma^*}(\theta) + \epsilon) r} \bigr\} \\
\le \bE \bigl\{ |F_\xi (re^{\im\theta})|^2 \bigr\}\, e^{- 2 ( h_{\sigma^*}(\theta) + \epsilon) r}
= e^{-2\epsilon r + o(r)}\,, \qquad r\to\infty\,.
\end{multline*}
Whence, by the Borel--Cantelli lemma, for every $\kappa>0$ and every $\theta\in [-\pi, \pi]$,
\[
\limsup_{n\to\infty} \frac{\log |F_\xi (\kappa n e^{\im\theta})|}{\kappa n} \le h_{\sigma^*} (\theta),
\qquad {\rm almost\ surely}.
\]
Since the exponential type of the entire function $F_\xi$ does not exceed $1$, for any $\kappa<\pi$,
we have
\[
\limsup_{r\to\infty} \frac{\log |F_\xi (r e^{\im\theta})|}{r}
= \limsup_{n\to\infty} \frac{\log |F_\xi (\kappa n e^{\im\theta})|}{\kappa n}\,.
\]
This is a special instance of a classical result that goes back to P\'olya and to Vl.~Bernstein.
For a simple proof of this result see, for instance,~\cite[Theorem~1.3.5]{Bieberbach}.
Therefore, given $\theta\in [-\pi, \pi]$, almost surely, we have
$ h^{F_\xi} (\theta) \le h_{\sigma^*} (\theta) $. Since both functions in this inequality are
continuous on $[-\pi, \pi]$, we immediately conclude that, almost surely, the inequality holds
for all $\theta\in [-\pi, \pi]$.
\end{proof}
We will be using Lemmas~\ref{lemma8.1} and~\ref{lemma8.2} in the Gaussian case (Theorem~\ref{thm:stat-gauss}).

\subsection{}
The next lemma is needed for the mixing case (Theorem~\ref{thm:stat-corr}).
As above, we use the notation
\[
W_R(\theta) = \sum_{|n|\le N} \xi (n+R)  e(n\theta)\, e^{-\frac{n^2}{2R}}\,,
\qquad N = R^{1/2} \log R +O(1),
\]
fix a non-negative even function $g\in C^2_0[-\tfrac12, \tfrac12]$ with
$\int g(\theta)\,\rd\theta =1$, and set 
\begin{multline*}
X_R = \int_{a-\frac1{2m}}^{a+\frac1{2m}} \bigl| W_R(\theta) \bigr|^2
g(m(\theta-a))\, \rd\theta \\
= \frac1{m} \, \sum_{|n_1|, |n_2| \le N}\,
\xi(n_1+R) \overline{\xi (n_2+R)}
e\bigl((n_1-n_2)a\bigr) e^{-(n_1^2+ n_2^2)/(2R)}\, \widehat{g}\bigl( \frac{n_2-n_1}m \bigr)\,.
\end{multline*}

\begin{lemma}\label{lemma8.3a}
Suppose $\xi$ is a wide-sense stationary sequence whose spectral measure $\rho$ has
no gaps in its support. Then for every $a\in [0, 1]$ and every $m\in \bN$, there
exists a positive limit
\begin{equation}\label{eq:8.3}
\lim_{R\to\infty} R^{-1/2}\, \bE X_R = c(a, m) >0\,.
\end{equation}
\end{lemma}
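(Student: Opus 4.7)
The plan is to reduce $\bE X_R$ to a spectral integral in $\tau$, identify the asymptotic shape of its integrand via Poisson summation, and then argue positivity using the density of the support of $\rho$.

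\smallskip

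\emph{Step 1 (spectral representation).} By stationarity, $\bE[\xi(n_1+R)\overline{\xi(n_2+R)}] = \int_{\bT} e\bigl((n_1-n_2)\tau\bigr)\,\rd\rho(\tau)$. Inserting this in the formula for $\bE X_R$ recalled just before the lemma and rewriting $m^{-1}\widehat g\bigl((n_2-n_1)/m\bigr) = \int_{-1/(2m)}^{1/(2m)} g(mu)\,e\bigl((n_1-n_2)u\bigr)\,\rd u$ (by evenness of $\widehat g$), Fubini yields
\[
\bE X_R \;=\; \int_{\bT} \Phi_R(\tau)\,\rd\rho(\tau),\qquad \Phi_R(\tau) \;:=\; \int_{-1/(2m)}^{1/(2m)} |G_R(a+\tau+u)|^2\,g(mu)\,\rd u,
\]
where $G_R(s) := \sum_{|n|\le N} e(ns)\,e^{-n^2/(2R)}$.

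\smallskip

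\emph{Step 2 (asymptotics of $\Phi_R$).} Poisson summation gives
\[
\sum_{n\in\bZ} e(ns)\,e^{-n^2/(2R)} \;=\; \sqrt{2\pi R}\,\sum_{k\in\bZ} e^{-2\pi^2 R(s-k)^2},
\]
and truncating to $|n|\le N = \sqrt R\log R + O(1)$ costs only $O(\sqrt R\,e^{-(\log R)^2/2})$ uniformly in $s$. Squaring and dropping negligible cross terms, then changing variables $\sigma = 2\pi\sqrt R(s-k)$ in each summand and using continuity of $g$, one checks the pointwise limit
\[
R^{-1/2}\,\Phi_R(\tau) \;\longrightarrow\; \sqrt\pi\,F(\tau), \qquad F(\tau) := \sum_{k\in\bZ} g\bigl(m(k-a-\tau)\bigr),\qquad R\to\infty.
\]
Parseval gives $\int_\bT |G_R|^2 = \sum_{|n|\le N} e^{-n^2/R} \le C\sqrt R$, so $R^{-1/2}\Phi_R(\tau) \le C\,\|g\|_\infty$ uniformly in $\tau$ and $R$. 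Bounded convergence then delivers
\[
\lim_{R\to\infty}\,R^{-1/2}\,\bE X_R \;=\; \sqrt\pi\int_\bT F(\tau)\,\rd\rho(\tau) \;=:\; c(a,m).
\]

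\smallskip

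\emph{Step 3 (positivity).} The function $F$ is continuous and non-negative on $\bT$. Since $g \ge 0$, $g\in C^2_0[-1/2,1/2]$, and $\int g = 1$, there exists $s_*\in(-1/2,1/2)$ with $g(s_*)>0$, and $g$ is strictly positive on an open interval around $s_*$. For $\tau_*\in\bT$ and $k_*\in\bZ$ with $k_*-a-\tau_* = s_*/m$, one has $F(\tau_*)\ge g(s_*)>0$, so $F$ is strictly positive on an open neighborhood $U\subset\bT$ of $\tau_*$. The no-gaps hypothesis means $\spt(\rho)=\bT$, hence $\rho(U)>0$ and $c(a,m)>0$.

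\smallskip

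\emph{Main obstacle.} The substantive technical work is in Step 2: upgrading the rough heuristic ``$R^{-1/2}|G_R|^2$ is (asymptotically) $\sqrt\pi$ times a Dirac comb on $\bZ$'' into a pointwise-in-$\tau$ limit controlled by a $\tau$-uniform bound, so that the interchange with $\rho$-integration is legitimate. The positivity step is short but is exactly where the no-gaps hypothesis is needed: if $\rho$ had a gap of length $\ge 1/m$ around $-a \pmod 1$, then $F$ would vanish on all of $\spt(\rho)$ and the limit would be zero.
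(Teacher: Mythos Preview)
Your proof is correct, but it takes a different route from the paper's. The paper works on the Fourier side throughout: it inserts $\bE[\xi(n_1+R)\overline{\xi(n_2+R)}]=\widehat\rho(n_2-n_1)$, changes variables $k=n_2-n_1$, $\ell=n_1+n_2$ to factor the double sum, evaluates the separated Gaussian $\ell$-sum as $\sqrt{\pi R}+O(R^{-1/2})$, and then passes to the limit in the remaining $k$-sum by dominated convergence (using $\widehat g\in\ell^1(\bZ)$), obtaining $\frac{\sqrt\pi}{m}\sum_{k}\widehat\rho(k)\,\widehat g(k/m)\,e(-ka)$, which is recognised as the density of $\rho*g_m$ at $-a$. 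Your approach is the dual one: you pass to the spectral side first, write $\bE X_R=\int_\bT\Phi_R\,\rd\rho$, and identify $R^{-1/2}\Phi_R(\tau)$ as a Gaussian approximate identity acting on the periodisation $F(\tau)=\sum_k g(m(k-a-\tau))$ via Poisson summation; the uniform Parseval bound then justifies bounded convergence under $\rho$. Both arguments land on the same limit $\sqrt\pi\,(\rho*g_m)(-a)/m$. The paper's route is shorter and avoids Poisson summation; yours makes the role of the no-gaps hypothesis slightly more transparent, since positivity reads off immediately from $\spt(\rho)=\bT$ and $F>0$ on an open set.
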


\begin{proof}
We have
\[
\bE X_R = \frac1{m} \, \sum_{|n_1|, |n_2| \le N}\,
\widehat{\rho} (n_2-n_1)\, \widehat{g}\, \bigl( \frac{n_2-n_1}{m} \bigr)\, e((n_1-n_2)a)\,
e^{-(n_1^2+n_2^2)/{2R}}\,.
\]
Put $k=n_2-n_1$, $\ell = n_2+n_1$. Then
\[
|k| \le 2N, \quad |\ell|\le 2N-k, \quad \ell \equiv k \,{\rm mod}\, 2,
\]
and $ n_1^2 + n_2^2  = \frac12 (k^2 +\ell^2)$. Hence,
\[
\bE X_R = \frac 1m\sum_{|k|\le 2N} \widehat{\rho}(k)\, \widehat{g}\bigl( \frac{k}{m}\bigr)\,
e(-ka)\, e^{-k^2/(4R)}\, \sum_{\substack{|\ell|\le 2N-k\\ \ell\equiv k \,{\rm mod}\, 2}} e^{-\ell^2/(4R)}\,.
\]
Because of the cut-off $e^{-k^2/(4R)}$, we discard the sum over $N\le |k|\le 2N$ (recall that
$N = R^{1/2} \log R+O(1)$) and consider only the range $|k|\le N$. Then the inner ``$\ell$-sum'' equals
$\sqrt{\pi R} + O(R^{-1/2})$, and we get
\[
\bE X_R = \frac{\sqrt{\pi R}}{m}\, \sum_{|k|\le N}
\widehat{\rho}(k)\, \widehat{g}\bigl( \frac{k}{m}\bigr)\,
e(-ka)\, e^{-k^2/(4R)} + O( \log R).
\]
Since $\widehat{g}\in l^1(\mathbb Z)$, by the dominated convergence on $\mathbb Z$ we have 
\[
\lim_{R\to\infty} R^{-1/2} \bE X_R =\frac{\sqrt{\pi}}{m}\, \sum_{k\in\bZ}
\widehat{\rho}(k)\, \widehat{g}\bigl( \frac{k}{m}\bigr)\, e(-ka)\,.
\]
The sum on the RHS is the density of the convolution  $\rho \ast g_m$ at the point $-a$, where
\[
g_m(\theta) =
\begin{cases}
m g(m\theta), & |\theta|\le 1/(2m) \\
0, & {\rm otherwise}.
\end{cases}
\]
Since the support of $\rho$ is the whole circle $\bT$, and the function $g$ is non-negative, this value is positive. This proves the lemma.
\end{proof}

\section{Proof of Theorem~6}

\subsection{}
First, we assume that $\xi\colon \bZ \to \bZ$ is an integer-valued stationary sequence with the spectral
measure $\rho$.
Let $K_\xi$ be the convex hull of $\operatorname{spt}(\rho)$, and let $K_\xi^*$ be its reflection in the real axis. Suppose that $\operatorname{spt}(\rho)\not=\bT$, that is $K_\xi^*\not=\overline{\mathbb D}$.
By P\'olya's theorem (see \cite[Theorem~33, Chapter~I]{Levin} or~\cite[Theorem~1.1.5]{Bieberbach}), the series
\[
f_\xi (w) = \sum_{n\ge 0} \frac{\xi (n)}{w^{n+1}}
\]
is analytic on $\widehat{\bC}\setminus K_\xi^*$. Since $\xi$ attains only integer values, another theorem of P\'olya~\cite[Theorem~6.2.1]{Bieberbach}
yields that for every fixed $\xi$, the function $f_\xi$ is rational with poles at roots of $1$,
$f_\xi=P/Q$,
with mutually prime $P,Q\in \mathbb Z[w]$ and monic $Q$.

Next, we use simple algebra.
Noting that $P$ is a product of irreducible polynomials, and recalling that if a polynomial
is irreducible in $\bZ[w]$ then it is also irreducible in $\bQ[w]$ (``Gauss lemma''), and that
two different irreducible polynomials in $\bQ[w]$ are mutually prime, we conclude that
$P$ has no common zeroes with $Q$.

Since any polynomial in $\bZ[w]$ is a product of irreducible ones, and since cyclotomic
polynomials
\[
\Phi_{n}(w)=\prod_{\gcd(k,n)=1}(w-e(k/n))
\]
belong to $\bZ[w]$ and are irreducible therein, we see that
\[
Q=\prod_{1\le k\le u}\Phi_{n(k)}\,.
\]
Since $f_\xi$ is analytic on a fixed arc of the unit circle, we obtain that $n(k)\le M$ for some $M$ independent of $\xi$.
Thus, the set of poles of $f_\xi$ is contained in $\{w\colon w^N=1\}$ for some $N\ge 1$ independent of $\xi$.

Furthermore, since $\bE |\xi (n)|^2$ is finite (and does not depend on $n$), applying Chebyshev's inequality
and the Borel-Cantelli Lemma, we see that, for any $\lambda>\frac12$, almost surely, $|\xi (n)| = o(n^\lambda)$,
whence,
\[
\max_{|w|=r} |f_\xi (w)| = o((r-1)^{-2}), \qquad r\downarrow 1\,.
\]
Therefore, all poles of $f_\xi$ are simple. Thus, $f_\xi$ can be written in the form
$ f_\xi (w) = (w^N-1)^{-1} S(w) $, where $S$ is a polynomial (depending on $\xi$)
and $N\in\bN$ does not depend on $\xi$. 
Hence, the coefficients $\xi (n)$ of $f_\xi$ are eventually
periodic with period $N$.

Since the sequence $\xi$ is stationary, we conclude that it is periodic with period $N$. 
Indeed, given $M<\infty$ we consider the bounded sequence $\xi_M$ given by
$$
\xi_M(n)=\begin{cases}M,\quad \xi(n)>M,\\ \xi(n),\quad -M\le \xi_M(n)\le M,\\-M,\quad \xi(n)<-M.\end{cases}
$$
The $\xi_M$ is also stationary, and the values $\bE(\xi_M(k+N)-\xi_M(k))^2$ do not depend on $k$. 
On the other hand, almost surely, the sequence $\xi_M(n)$ is eventually periodic with period $N$, and 
by the bounded convergence theorem, the values $\bE(\xi_M(k+N)-\xi_M(k))^2$ converge to $0$ for $k\to\infty$.
Hence they are equal to $0$, and the sequences $\xi_M(n)$ are periodic with period $N$ for every $M$. 
Thus, $\xi$ is periodic with period $N$. 
\hfill$\Box$

\medskip
Note that we used stationarity of $\xi$ only on the last step of the proof. The rest is
valid for wide-stationary integer-valued sequences. Also note that this last step can be made for 
wide-stationary sequences $\xi$ satisfying the condition $\sup_n\bE |\xi(n)|^\kappa<\infty$ for some 
$\kappa>2$. Hence, the first statement in Theorem~\ref{thm:6} is valid for wide-stationary sequences satisfying this moment condition. 

\subsection{}
To prove the second part of Theorem~\ref{thm:6}, we use a result of Hausdorff~\cite[Theorem~4.2.4]{Bieberbach}.
It says that {\em if the set $A$ is uniformly discrete then there exist at most countably
many sequences $\xi$ such that the series $f_\xi (w)$ can be analytically continued
through an arc in $\bT$}.

Let $\mu$ be a translation invariant probability measure in the space of sequences
$A^{\bZ}$ corresponding to the stationary sequence $\xi$. Suppose that there exists
a lacuna in the support of the spectral measure $\rho$. Then, as above,
by Lemma~\ref{lemma8.2} combined with  P\'olya's theorem, almost surely,
the function $f_\xi$ has an analytic continuation through an arc in $\bT$; and by
the theorem of Hausdorff, the measure $\mu$ has at most countable support.
Since $\mu$ is translation invariant, we conclude that, almost surely, the sequence
$\xi(n)$ is periodic. Since $\mu$ is ergodic, the sequence
$\xi$ is periodic. \hfill $\Box$

\section{Proof of Theorem~3}

\subsection{}
The proof of Theorem~\ref{thm:stat-corr} needs in addition an estimate of
the fourth order correlations:

\begin{lemma}\label{lemma8.3b}
Let $\xi$ be a bounded stationary sequence of random variables,  
and let the maximal correlation coefficient of $\xi$ satisfy
\[
r(t) = O\bigl( (\log t)^{-\kappa}  \bigr)\,, \qquad t\to\infty\,,
\]
with some $\kappa>1$. Then, for every $a\in [0, 1]$ and every $m\in\bN$
\[
\bE \bigl( X_R - \bE X_R \bigr)^2
= O\Bigl( \frac{R}{(\log R)^{\kappa_1}}\Bigr)\,,
\qquad R\to\infty\,,
\]
with some $1<\kappa_1<\kappa$.
\end{lemma}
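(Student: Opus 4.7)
The plan is to expand $\bE(X_R - \bE X_R)^2$ as a four-fold sum over indices $(n_1, n_2, n_3, n_4)$, bound each covariance term using the maximal correlation coefficient, and control the resulting sum via the Gaussian and $\widehat g$ weight decay. Setting $Z_{n_1, n_2} = \xi(n_1 + R)\overline{\xi(n_2 + R)} - \bE\{\xi(n_1 + R)\overline{\xi(n_2 + R)}\}$, the explicit formula for $X_R$ displayed just before the lemma gives
\[
\bE(X_R - \bE X_R)^2 = \frac1{m^2} \sum_{n_1, n_2, n_3, n_4}\bE\{Z_{n_1, n_2} \overline{Z_{n_3, n_4}}\}\, c_{n_1, n_2}\, \overline{c_{n_3, n_4}},
\]
where $c_{n_1, n_2} = e((n_1 - n_2)a)\, e^{-(n_1^2+n_2^2)/(2R)}\, \widehat g((n_2 - n_1)/m)$ and indices range over $[-N, N]$.

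Next, I would establish a pointwise covariance bound. Since $\xi$ is bounded, $\bE|Z_{n_i, n_j}|^2 \le C$ uniformly. When the pair $\{n_1, n_2\}$ lies entirely to the left of $\{n_3, n_4\}$ in the sense $\max(n_1, n_2) + G \le \min(n_3, n_4)$ (or symmetrically), stationarity and the very definition of the maximal correlation coefficient yield
\[
|\bE\{Z_{n_1, n_2}\overline{Z_{n_3, n_4}}\}| \le r(G)\,\sqrt{\bE|Z_{n_1, n_2}|^2\, \bE|Z_{n_3, n_4}|^2} \le C\, r(G).
\]
In the remaining, \emph{interleaved}, configurations I would use only the trivial bound $|\bE\{Z_{n_1, n_2}\overline{Z_{n_3, n_4}}\}| \le C$. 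The estimate $|\widehat g(k/m)| \le C\min(1, m^2/k^2)$, coming from $g \in C^2_0$, makes $\sum_{k \in \bZ} |\widehat g(k/m)| = O(m)$, which I use to control the within-pair differences $n_2 - n_1$ and $n_4 - n_3$.

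For the separated configurations I would parametrize the quadruple by $a = \max(n_1, n_2)$, the pair-gap $G = \min(n_3, n_4) - a$, and the two within-pair differences $k_1, k_2$. The Gaussian weights reduce the sum in $a$ to $O(\sqrt R\, e^{-G^2/(4R)})$, the $\widehat g$-weighted sums over $k_1, k_2$ contribute $O(m^2)$, and
\[
\sum_{G \ge 1} r(G)\, e^{-G^2/(4R)} \le C\,\frac{\sqrt R}{(\log R)^\kappa},
\]
after splitting at a cutoff $G_0$ below which $r$ is replaced by a constant. Restoring the $1/m^2$ prefactor, the separated contribution is $O(R/(\log R)^\kappa)$. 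The interleaved configurations, after the $\widehat g$ decay is used, are effectively confined to a cluster of diameter $O(m)$, yielding $O(\sqrt R\, m^3)$ quadruples with trivial covariance, hence a contribution of $O(\sqrt R\, m)$, which is negligible.

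The hard part will be the combinatorial bookkeeping of interleaved patterns: although only six sorted patterns of $(n_1, n_2, n_3, n_4)$ arise (two separated, four interleaved), for each pattern one must choose a parametrization in which the $\widehat g$ decay actually tightens the cluster and the Gaussian weights are not lost prematurely. Any slack in constants or logarithmic factors from the separated estimate is absorbed by the condition $\kappa_1 < \kappa$ in the statement.
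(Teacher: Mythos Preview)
Your proposal is correct and follows essentially the same strategy as the paper: expand the variance as a four-fold sum, bound each covariance by the maximal correlation coefficient evaluated at the gap between the two index pairs, and then exploit the Gaussian and $\widehat g$ decay. The difference is purely in the bookkeeping. The paper passes to the variables $k_i=n_{2i-1}-n_{2i}$, $\ell_i=n_{2i-1}+n_{2i}$ and uses the single inequality $\dist(I,J)\ge\tfrac12(|\ell_1-\ell_2|-|k_1|-|k_2|)$ to treat all configurations at once, splitting only according to whether $|\ell_1-\ell_2|\le 2(|k_1|+|k_2|)$; your separated/interleaved dichotomy is a geometric version of the same split. One minor slip: the interleaved quadruples are not literally ``confined to diameter $O(m)$''---the $\widehat g$ weights give only $\ell^1$-control over $k_1,k_2$, so a factor $\sum_{k}(k+1)|\widehat g(k/m)|=O(m^2\log R)$ appears---but the resulting $O(\sqrt R\,m\log R)$ is still negligible, and any such logarithmic loss is absorbed by $\kappa_1<\kappa$.
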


\begin{proof} We have
\begin{multline*}
m^2\,\bE \bigl( X_R - \bE X_R \bigr)^2 \\
= \bE \Bigl[ \sum_{|n_1|, |n_2|\le N}
\bigl( \xi(n_1+R)\overline{\xi(n_2+R)} -
\bE \bigl\{  \xi(n_1+R)\overline{\xi(n_2+R)}\bigr\} \bigr)\,
\widehat{g}\bigl( \frac{n_2-n_1}{m} \bigr) \times \\
\times e((n_1-n_2)a)\, e^{(n_1^2+n_2^2)/(2R)} \Bigr]^2 \\
= \sum_{|n_1|, ..., |n_4|\le N}\,
C(n_1, n_2, n_3, n_4)\, \widehat{g}\bigl( \frac{n_2-n_1}{m} \bigr)\,
\widehat{g}\bigl( \frac{n_4-n_3}{m} \bigr) \times \\
\times e((n_1-n_2+n_3-n_4)a)\,  e^{(n_1^2 + n_2^2 + n_3^2 + n_4^2)/(2R)}\,,
\end{multline*}
where
\[
C(n_1, n_2, n_3, n_4) =
\bE \bigl\{ \eta (n_1, n_2) \cdot \eta (n_3, n_4) \bigr\}
\]
with
\[
\eta (n_i, n_j) = \xi (n_i+R) \overline{\xi (n_j+R)} -
\bE \bigl\{ \xi (n_i+R) \overline{\xi (n_j+R)}\bigr\}\,.
\]
Let $I$ be the interval with endpoints $n_1$ and $n_2$, 
and let $J$ be the interval with endpoints $n_3$ and $n_4$.
Denoting $t=\dist(I,J)$
we estimate $C$ by the maximal correlation coefficient $r(t)$:
\[
\bigl| C(n_1, n_2, n_3, n_4)  \bigr| \le
r(t)\, \sqrt{\bE| \eta (n_1, n_2) |^2  \cdot \bE| \eta (n_3, n_4) |^2}
\le 4r(t)\,\| \xi \|_\infty^4\,.
\]
Therefore,
\begin{multline*}
\bE \bigl( X_R - \bE X_R \bigr)^2
= O(1)\|\xi\|_\infty^4 \times
\\
\times \sum_{|n_1|, ..., |n_4|\le N}\, r(t) \,
\frac{m^2}{1+(n_1-n_2)^2} \, \frac{m^2}{1+(n_3-n_4)^2} \,
e^{-(n_1^2+n_2^2+n_3^2 + n_4^2)/(2R)}\,.
\end{multline*}

To estimate the sum on the RHS, we put
\[
k_1 = n_1-n_2, \ \ell_1=n_1+n_2, \ k_2 = n_3-n_4, \ \ell_2 = n_3 + n_4\,.
\]
Then 
$ t \ge \frac12 \bigl( |\ell_1-\ell_2| - (|k_1|+|k_2|) \bigr) $,
and we need to estimate the sum
\[
\sum_{|k_1|, |k_2|, |\ell_1|, |\ell_2|\le 2N}\
r\bigl( \tfrac12 \bigl( |\ell_1-\ell_2| - (|k_1|+|k_2|) \bigr) \bigr)\,
\frac{e^{- (k_1^2 + k_2^2 +\ell_1^2 + \ell_2^2)/(4R)}}{(1+k_1^2)(1+k_2^2)}\,.
\]
Here and later on, $r(t)=r(\max([t],0))$, where $[t]$ is the maximal integer not exceeding $t$.

We split this sum into two parts. The first one is taken over
$|\ell_1-\ell_2|\le 2(|k_1|+|k_2|)$, while the second one is
taken over $|\ell_1-\ell_2| > 2(|k_1|+|k_2|)$.

The first sum does not exceed
\begin{align*}
&\sum_{k_1, k_2 \ge 0}
\frac{e^{- (k_1^2 + k_2^2)/(4R)}}{(1+k_1^2)\,(1+ k_2^2)}
\cdot O(1+k_1+k_2) \cdot  O(N)  \\
&=O(R^{1/2} \log R)\, \sum_{k_1, k_2\ge 0}\, \frac{1+k_1+k_2}
{(1+k_1^2)\,(1+ k_2^2)}\, e^{-(k_1^2+k_2^2)/(4R)} \\
&=O(R^{1/2} \log R)
\Bigl[ \sum_{k\ge 1} \frac{e^{-k^2/(4R)}}{k} + O(1) \Bigr] = O\bigl( R^{1/2} (\log R)^2 \bigr)\,,
\end{align*}
while the second sum is bounded by
\[
O(1)\, \sum_{|\ell_1|, |\ell_2| \le 2N}\,
r\bigl( \tfrac14 |\ell_1-\ell_2| \bigr) e^{-(\ell_1^2 +\ell_2^2)/(4R)}
= O(\sqrt{R})\, \sum_{\ell>1}\, r\bigl( \tfrac14 \ell \bigr) e^{-\ell^2/(8R)}\,.
\]
Recall that $r(t)=O\Bigl( \frac1{\log^\kappa R}\Bigr)$ and let $\kappa=1+2\epsilon$. Then
\begin{multline*}
\sum_{\ell>1}\, r\bigl( \tfrac14 \ell \bigr) e^{-\ell^2/(8R)}
\le \, \sum_{1\le\ell\le\sqrt{R}\log^\epsilon R}\, r\bigl( \tfrac14 \ell \bigr)
+ \, \sum_{\ell>\sqrt{R}\log^\epsilon R}\, e^{-\ell^2/(8R)} \\
= O(1)\,
\Bigl[ \frac{\sqrt{R}\, \log^\epsilon R}{\log^{1+2\epsilon} R} + \sqrt{R}\, e^{-c \log^{2\epsilon}R} \Bigr]
=
O(1)\, \frac{\sqrt{R}}{\log^{1+\epsilon}R}\,.
\end{multline*}
This completes the proof of Lemma~\ref{lemma8.3b}.
\end{proof}

\subsection{}
Now, the proof of Theorem~\ref{thm:stat-corr} is straightforward.
Since the maximal correlation coefficient $r(m)$ decays to zero as $m\to\infty$, the
bounded stationary sequence $\xi$ is {\em linearly regular}. That is,
$ \displaystyle \bigcap_m L^2_{(-\infty, m]} = \{ 0 \} $, where $ L^2_{(-\infty, m]} $ is the Hilbert space,
which consists of the random variables measurable with respect to  
the $\sigma$-algebra generated by $\bigl\{\xi (n)\colon -\infty<n\le m \bigr\}$
that have a finite second moment.  Then the spectral measure $\rho$ has a density
$|f|^2$, where $f$ belongs to
the Hardy space $H^2(\bT)$, see~\cite[Chapter~XVII, \S1]{IL}, and therefore, ${\rm spt}(\rho)=\bT$.
Hence, we are in the assumptions of Lemma~\ref{lemma8.3a}.
Fix $a\in[0,1]$, $m\ge 1$.
Then, combining Lemma~\ref{lemma8.3a} with Lemma~\ref{lemma8.3b}, and using Chebyshev's
inequality, we see that,
for some $c = c(a, m)>0$, $\kappa>0$,
\[
\bP \bigl\{ X_R < c\sqrt{R} \bigr\} = O(R^{-1})\, \bE\bigl( X_R - \bE X_R \bigr)^2
= O\bigl( (\log R)^{-\kappa} \bigr)\,.
\]
Then we take any $\delta\in (\kappa^{-1}, 1)$, and put $R_j=e^{j^\delta}$.
This is a thick sequence (i.e., $R_{j+1}/R_j\to 1$), while
\[
\bP \bigl\{ X_{R_j} < c\sqrt{R_j} \bigr\} = 
O\bigl( j^{-\delta\kappa}\bigr)
\]
with $\delta\kappa>1$. Applying the Borel--Cantelli lemma, we get estimate~\eqref{eq4.4}.
Then Lemma~\ref{lemma4.5} does the job. \hfill $\Box$

\section{Proof of Theorem~4}\label{sec10}

Given $z=re^{\im\theta}$, $F_\xi(z)$ is a Gaussian random variable. As before,
$\sigma^*$ is the reflection of the spectrum $\sigma(\xi)$ in the real axis.
By Lemma~\ref{lemma8.1},
\[
\bE |F_\xi (re^{\im\theta})|^2 = e^{2h_{\sigma^*}(\theta)r + o(r)}\,, \qquad r\to\infty\,.
\]
Then, for every $\epsilon>0$, every $r>r_\epsilon$, and every $\theta\in [-\pi, \pi]$, we have
\begin{multline*}
\bP \bigl\{ \log |F_\xi (re^{\im\theta})| < (h_{\sigma^*}(\theta)-\epsilon)r \bigr\} \\
= \bP \bigl\{ |F_\xi (re^{\im\theta})| < e^{-\epsilon r + o(r)} \, \sqrt{\bE |F_\xi (re^{\im\theta})|^2}\, \bigr\}
< e^{-\frac12\epsilon r}
\end{multline*}
(the last inequality is the place where we are using the Gaussianity of $F_\xi$).
Applying this with $R=j$ and using the Borel--Cantelli lemma,
we see that, given $\theta\in [-\pi, \pi]$, we have
\[
\liminf_{j\to\infty} \frac1{j}\, \log |F_\xi (je^{\im\theta})| \ge h_{\sigma^*}(\theta),
\qquad {\rm  almost\ surely}.
\]
By Lemma~\ref{lemma8.2}, $h^{F_\xi} \le h_{\sigma^*}$ everywhere on $[-\pi, \pi]$.
Therefore, applying Lemma~\ref{lemma-subh}, we conclude
that, almost surely, $F_\xi$ has completely regular growth on the ray
$\{ \arg (z) = \theta \}$ with the indicator $h_{\sigma^*}(\theta)$. To complete the proof, we apply this argument
to a dense countable set of $\theta$'s. \hfill $\Box$

\section{Proof of Theorem~5}

Now, $\xi$ is a uniformly almost-periodic sequence.
By $\widehat{\xi}$ we denote the Fourier transform of $\xi$, $\widehat{\xi}\colon \bT \to \bC$.
The spectrum of $\xi$ is $\sigma(\xi) =\{e^{\im\la}\in\bT\colon \widehat{\xi}(e^{\im\la})\ne 0\}$,
this is an at most countable subset of $\bT$.

We will be using Bochner's theorem that states that
{\em there exists an enumeration of the spectrum
$\sigma (\xi) = \{e^{\im\la_1}, e^{\im\la_2}, \ldots\, \}$ and a sequence of multipliers $\beta_k^{(m)}$}
($k\in\{1, \ldots, m\}$) {\em satisfying  $0\le \beta_k^{(m)}\le 1$ and $\beta_k^{(m)}\to 1$ as
$m\to\infty$, $k$ stays fixed, such that the finite exponential sums
\[
\sum_{k=1}^m \beta_k^{(m)} \widehat{\xi}(e^{\im\la_k}) e^{\im\la_k n}
\]
converge to $\xi (n)$ uniformly in $n\in\bZ$ as $m\to\infty$}.
For the proof, see, for instance, \cite[Chapter~VI, \S~5]{Katznelson}. Therein, the proof is given for almost
periodic functions, the proof for almost periodic sequences is almost the
same.

As before,
by $\sigma^*$ we denote the reflection of $\sigma(\xi)$ in the real axis.
First, we show that $h^F\le h_{\sigma^*}$ everywhere, and then that $|F(re^{\im\theta})|\ge c(\theta)e^r$
with some $c(\theta)>0$, whenever $\theta\in\sigma^*$ and $r\ge r_0(\theta)$. Then Lemma~\ref{lemma2} does the job.

\subsection{}

The following lemma is an old result of Bochner and Bonnenblust~\cite{BB}.
The proof given here follows the one in~\cite[Chapter~VI]{Levin}.

\begin{lemma}\label{lemma-ap1} Everywhere,
$h^{F_\xi}\le h_{\sigma^*}$.
\end{lemma}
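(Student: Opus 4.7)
The plan is to approximate $\xi$ uniformly by the Bochner--Fej\'er trigonometric polynomials
$\xi_m(n) = \sum_{k=1}^m \beta_k^{(m)}\widehat\xi(e^{\im \la_k}) e^{\im \la_k n}$
introduced just before the lemma, handle each $\xi_m$ by hand, and transfer the conclusion to $F_\xi$ through P\'olya's theorem recorded in the introduction. For every $m$,
\[
F_{\xi_m}(z) = \sum_{k=1}^m \beta_k^{(m)}\widehat\xi(e^{\im\la_k})\, e^{e^{\im\la_k}z},
\]
and the Phragm\'en--Lindel\"of indicator of each summand $e^{e^{\im\la_k}z}$ is $\cos(\theta+\la_k)$, bounded above by $h_{\sigma^*}(\theta)$ since $e^{\im\la_k}\in\sigma$. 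Hence $h^{F_{\xi_m}}\le h_{\sigma^*}$ pointwise for every $m$, with the quantitative bound $|F_{\xi_m}(re^{\im\theta})|\le C_m e^{r h_{\sigma^*}(\theta)}$, where $C_m = \sum_k |\beta_k^{(m)}\widehat\xi(e^{\im\la_k})|$.

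A direct passage to the limit fails: the uniform approximation yields only $|F_\xi(z)-F_{\xi_m}(z)|\le \eps_m e^{|z|}$ with $\eps_m\to 0$, which loses to the exponential growth on directions where $h_{\sigma^*}(\theta)<1$ because $C_m$ may be unbounded in $m$. To sidestep this, I invoke P\'olya's theorem: $H^{F_\xi}$ is the Minkowski functional of the reflected closed convex hull of the set of singularities of
\[
g(w) = w^{-1}f_\xi(w^{-1}) = \sum_{n\ge 0}\xi(n)w^{-n-1},\qquad |w|>1.
\]
It therefore suffices to show that the singular set of $g$ on $\bT$ is contained in the closure $\overline\sigma$ of the spectrum.

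To this end, I exploit that almost-periodic sequences are defined on all of $\bZ$ and introduce the companion series
\[
g_-(w) = -\sum_{n\ge 0}\xi(-n-1) w^n,\qquad |w|<1,
\]
which satisfies $|g_-(w)|\le\|\xi\|_\infty/(1-|w|)$. A direct partial-fraction computation shows that for each trigonometric polynomial $\xi_m$ the analogous series $g_m$ and $g_{m,-}$ are the two Laurent restrictions of a single rational function $G_m(w)=\sum_k\beta_k^{(m)}\widehat\xi(e^{\im\la_k})/(w-e^{\im\la_k})$, whose poles all lie in $\sigma$. Fix a closed arc $I\subset\bT$ with $\dist(I,\overline\sigma)>0$ and a Euclidean neighborhood $V$ of $I$ still at positive distance from $\overline\sigma$; then $G_m$ is analytic on $V$ for every $m$.

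On the annular good region $\{||w|-1|\ge\eta\}$ one has the uniform bound $|G_m(w)|\le 2\|\xi\|_\infty/\eta$, so Cauchy's integral formula along a contour in $V$ lying in this region yields a bound on $G_m$ over a slightly smaller neighborhood $V'\subset V$ of $I$ that is uniform in $m$. Vitali's convergence theorem then produces an analytic limit $G$ on $V'$, and the convergences $G_m\to g$ on $|w|>1+\eta$ and $G_m\to g_-$ on $|w|<1-\eta$ identify $G$ as a common analytic continuation of $g$ and $g_-$ across $I$. Hence the singularities of $g$ on $\bT$ lie in $\overline\sigma$, and P\'olya's theorem delivers $h^{F_\xi}\le h_{\sigma^*}$. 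The main obstacle is precisely the uniform-in-$m$ bound on $G_m$ near $I$: the naive partial-fraction estimate depends on the uncontrolled $C_m$, so one must combine the companion series $g_-$ with a Cauchy estimate pushed into the annular region where both Laurent series converge uniformly and absolutely.
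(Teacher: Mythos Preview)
Your overall strategy is the same as the paper's: pass through P\'olya's theorem and show that $g(w)=\sum_{n\ge 0}\xi(n)w^{-n-1}$ continues analytically across any arc $I\subset\bT$ disjoint from $\overline\sigma$. Introducing the companion series $g_-$ and the rational approximants $G_m$ is a natural idea. However, the step ``Cauchy's integral formula along a contour in $V$ lying in this region yields a bound on $G_m$ over a slightly smaller neighbourhood $V'\subset V$ of $I$'' does not work as stated. Since $I\subset\bT$, any closed Jordan curve $\gamma$ contained in $\{||w|-1|\ge\eta\}$ lies entirely in $\{|w|<1-\eta\}$ or entirely in $\{|w|>1+\eta\}$; in the first case its interior misses $I$, and in the second case its interior contains all of $\bT$, hence all the poles $e^{\im\la_k}$ of $G_m$. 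Either way the Cauchy representation you want for points near $I$ is unavailable. Any contour that genuinely separates $I$ from $\overline\sigma$ must cross $\bT$, and along it your bound $|G_m(\zeta)|\le 2\|\xi\|_\infty/\bigl||\zeta|-1\bigr|$ blows up, so neither a direct estimate nor a Vitali argument goes through.

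This is precisely the obstacle you yourself flagged (the uncontrolled $C_m$), and the contour manoeuvre does not actually evade it. The paper closes this gap by a different device: it lifts the approximants to entire functions $\Xi_m(w)=\sum_{k\le m}\beta_k^{(m)}\widehat\xi(e^{\im\la_k})e^{\im\la_k w}$ of exponential type $<\pi$, and invokes Cartwright's theorem to upgrade the uniform convergence $\|\Xi_m-\Xi_{m'}\|_{\ell^\infty(\bZ)}\to 0$ to $\|\Xi_m-\Xi_{m'}\|_{L^\infty(\bR)}\to 0$. This yields an entire interpolant $\Xi$ of type $<\pi$ for the sequence $\xi$, and then the Carlson--P\'olya theorem gives the analytic continuation of $f_\xi$ across the complementary arc. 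In effect, Cartwright's theorem supplies exactly the uniform control across $\bT$ that your contour argument cannot produce; if you want to keep your framework, that is the tool to insert.
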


\begin{proof}

If the spectrum $\sigma (\xi)$ is dense on $\bT$, then $h_{\sigma^*}\equiv 1$, and
there is nothing to prove. So we assume that there is an open arc $J\subset\bT$ such
that $\sigma(\xi) \bigcap J = \emptyset$. Rotating the complex plane, $z\mapsto ze^{-\im t}$,
we shift the spectrum $\sigma^*(\xi)$ and the indicator function $h^{F_\xi}$ by $t$. Therefore, without loss of generality,
we may assume that $\sigma(\xi)$ is contained in the arc $\{e^{\im\theta}\colon |\theta|\le \pi-\delta \}$ for some $\delta>0$.
We need to show that the indicator diagram $I^F$ is contained in the closed convex hull
of $\{e^{\im\theta}\colon |\theta|\le \pi-\delta \}$.

By our assumption, the functions
\[
w \mapsto \Xi_m(w) = \sum_{k=1}^m \beta_k^{(m)} \widehat{\xi}(e^{\im\la_k}) e^{\im\la_k w}
\]
are entire functions of exponential type at most $\pi-\delta$. By Bochner's theorem, given $\e>0$, there exists
$M_\e$ so that, for all $m_1, m_2>M_\e$,
\[
\| \Xi_{m_1} - \Xi_{m_2} \|_{\ell^\infty(\bZ)} < \e.
\]
Then, by Cartwright's theorem~\cite[Chapter~IV, Theorem~15]{Levin},
\[
\| \Xi_{m_1} - \Xi_{m_2} \|_{L^\infty(\bR)} < C(\delta)\e,
\]
and, invoking one of the Phragm\'en--Lindel\"of theorems, we conclude that the
sequence of entire functions $\Xi_m$ converges to an entire function $\Xi$ uniformly in
any horizontal strip. Obviously, the entire function $\Xi$ interpolates the sequence $\xi$ at $\bZ$,
the exponential type of $\Xi$ does not exceed $\pi-\delta$, and $\Xi$ is bounded on $\bR$.
Thus, the indicator diagram of $\Xi$ is contained in the interval $[(-\pi+\delta)\im, (\pi-\delta)\im]$
of the imaginary axis. It is worth noting that in what follows we use only that the exponential type of $\Xi$ does not exceed $\pi-\delta$.

Now, consider the Taylor series
\[
f(s) = \sum_{n\ge 0} \xi (n)s^n
\]
analytic in the unit disk.
Since the coefficients $\xi(n)$ can be interpolated by an entire function of exponential type
at most $\pi-\delta$, the function $f$ can be analytically continued through the arc
$\{e^{\im\theta}\colon |\theta-\pi|<\delta \}$ to $\bar\bC\setminus\bar\bD$. This is a classical
result that goes back to Carlson and P\'olya (see~\cite[Appendix~1, \S~5]{Levin}
or~\cite[Theorem~1.3.1]{Bieberbach}).
On the other hand, the function $w^{-1} f(w^{-1})$ is nothing but the Laplace transform of the
entire function $F_\xi$, and, as we have seen,
this function is analytic outside the closed convex hull of the arc $\{e^{\im\theta}\colon |\theta|\le \pi-\delta \}$.
Then, by P\'olya's theorem (see~\cite[Chapter~I, Theorem~33]{Levin} or~\cite[Theorem~1.1.5]{Bieberbach}),
the indicator diagram $I^{F_\xi}$ is contained in the closed convex hull of $\{e^{\im\theta}\colon |\theta|\le \pi-\delta \}$.
This completes the proof of the lemma.
\end{proof}

\subsection{}
Here, we show that $F_\xi$ grows as $e^r$ on the rays corresponding to the set $\sigma^*$.

\begin{lemma}\label{lemma-ap2}
For every $\theta\in\sigma^*$, there exists $c(\theta)>0$ and $r(\theta)<\infty$ so that
\[
\bigl| F_\xi (re^{\im\theta}) \bigr| \ge c(\theta) e^r, \qquad r\ge r(\theta)\,.
\]
\end{lemma}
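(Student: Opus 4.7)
The plan is to reduce to the case $\theta=0$ with $\widehat{\eta}(1) \ne 0$ and then extract the dominant exponential from the Bochner approximation. Write $\theta = -\lambda$, so that $e^{\im\la} \in \sigma(\xi)$, and set $\eta(n) = \xi(n)\, e^{-\im\la n}$. The sequence $\eta$ is uniformly almost-periodic (being the product of two such sequences), its Fourier coefficients are translates of those of $\xi$ so that $\widehat{\eta}(1) = \widehat{\xi}(e^{\im\la}) \ne 0$, and factoring inside the series yields
$$F_\xi(re^{\im\theta}) = \sum_{n\ge 0} \xi(n)\,e^{-\im\la n}\, \frac{r^n}{n!} = F_\eta(r)\,.$$
Thus it suffices to prove $|F_\eta(r)| \ge c\,e^r$ for all sufficiently large $r>0$.

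Next I would apply Bochner's theorem to $\eta$. Since $1\in\sigma(\eta)$, the enumeration $\sigma(\eta) = \{e^{\im\mu_1}, e^{\im\mu_2}, \ldots\}$ contains some index $k_0$ with $\mu_{k_0}=0$, and the approximating polynomials
$$\eta_m(n) = \sum_{k=1}^m \beta_k^{(m)}\, \widehat{\eta}(e^{\im\mu_k})\, e^{\im\mu_k n}$$
tend to $\eta$ uniformly on $\bZ$ with $\beta_{k_0}^{(m)}\to 1$. Term-by-term substitution into the Taylor series gives the explicit evaluation
$$F_{\eta_m}(r) = \sum_{k=1}^m \beta_k^{(m)}\, \widehat{\eta}(e^{\im\mu_k})\, \exp\bigl(e^{\im\mu_k}r\bigr).$$
The $k=k_0$ summand equals $\beta_{k_0}^{(m)}\,\widehat{\eta}(1)\, e^r$, while each of the finitely many remaining summands has modulus at most $|\widehat{\eta}(e^{\im\mu_k})|\, e^{r\cos\mu_k}$ with $\cos\mu_k < 1$, so together they contribute at most $C_m\,e^{\rho_m r}$ for some $\rho_m<1$ depending on $m$.

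The approximation error is controlled by
$$|F_\eta(r) - F_{\eta_m}(r)| \le \|\eta-\eta_m\|_{\ell^\infty(\bZ)}\, \sum_{n\ge 0}\frac{r^n}{n!} = \|\eta-\eta_m\|_{\ell^\infty(\bZ)}\, e^r\,.$$
Fix $m$ large enough that $\|\eta-\eta_m\|_\infty \le \tfrac14|\widehat{\eta}(1)|$ and $\beta_{k_0}^{(m)}\ge\tfrac34$. Combining the three estimates,
$$|F_\eta(r)| \ge \tfrac34|\widehat{\eta}(1)|\,e^r - \tfrac14|\widehat{\eta}(1)|\,e^r - C_m\,e^{\rho_m r} \ge \tfrac14|\widehat{\eta}(1)|\, e^r$$
for $r\ge r_0(\theta)$, giving the desired inequality with $c(\theta) = \tfrac14|\widehat{\xi}(e^{\im\la})|$. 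There is no serious obstacle in this plan: the argument is driven by the fact that $1\in\sigma(\eta)$ and that the Bochner polynomials cleanly isolate the zero-frequency mode from the strictly slower-growing modes $\exp(e^{\im\mu_k}r)$ with $\mu_k\ne 0$; the only point requiring a little care is ensuring $k_0$ is actually present in $\eta_m$, which happens automatically once $m\ge k_0$.
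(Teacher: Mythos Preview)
Your proof is correct and follows essentially the same approach as the paper: isolate the dominant exponential from the Bochner approximation, bound the finitely many remaining exponentials by $C_m e^{\rho_m r}$ with $\rho_m<1$, and control the approximation error by $\|\cdot\|_{\ell^\infty}e^r$. The only cosmetic difference is that you first rotate (replacing $\xi$ by $\eta(n)=\xi(n)e^{-\im\la n}$) to reduce to $\theta=0$, whereas the paper applies Bochner directly to $\xi$ and evaluates $F_{\xi_m}$ at $z=re^{-\im\la_j}$; the two computations are the same up to this change of variables.
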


\begin{proof} Once again, we will be using Bochner's theorem.
We fix $e^{\im\la_j}\in\sigma(\xi)$, take $m\ge j$, and put
\[
\xi_m (n) = \sum_{k=1}^m \beta_k^{(m)}\, \widehat{\xi}(e^{\im\la_k}) e^{\im\la_k n}\,.
\]
Then, uniformly in $z$,
\begin{equation}\label{eq:A4}
\bigl| F_\xi (z) - F_{\xi_m} (z) \bigr| \le \e_m e^{|z|},
\qquad {\rm with}\ \e_m\to 0.
\end{equation}
Furthermore, $F_{\xi_m}(z)$ is a finite sum of exponential functions
\[
F_{\xi_m}(z) = \sum_{k=1}^m \beta_k^{(m)} \widehat{\xi}(e^{\im\la_k}) e^{z e^{\im\la_k}},
\]
whence
\begin{align*}
\bigl| F_{\xi_m}(re^{-\im\la_j}) \bigr| &\ge \beta_j^{(m)} |\widehat{\xi}(e^{\im\la_j})| e^r
- \sum_{\substack{k=1\\ k\ne j}}^m \beta_k^{(m)} |\widehat{\xi}(e^{\im\la_k})|
e^{r\cos (\la_k-\la_j)} \\
&\ge \beta_j^{(m)} |\widehat{\xi}(e^{\im\la_j})| e^r  - C_m e^{(1-\delta_m)r}\,,
\end{align*}
with some $\delta_m>0$. Therefore,
\begin{equation}\label{eq:A5}
\liminf_{r\to\infty} e^{-r}\, \bigl| F_{\xi_m}(re^{-\im\la_j}) \bigr|
\ge \beta_j^{(m)} |\widehat{\xi}(e^{\im\la_j})| \ge \tfrac12  |\widehat{\xi}(e^{\im\la_j})|\,,
\end{equation}
provided that $m\ge m_0(j)$.  Juxtaposing~\eqref{eq:A4} and~\eqref{eq:A5},
we get Lem\-ma~\ref{lemma-ap2}.
\end{proof}

\medskip To finish off the proof of Theorem~\ref{thm:Levin}, we
observe that, by Lemmas~\ref{lemma-ap1} and~\ref{lemma-ap2},
the function $F_\xi$ satisfies the assumptions of Lemma~\ref{lemma2},
and then Theorem~\ref{thm:Levin} readily follows. 
\hfill $\Box$

\renewcommand{\thesection}{}
\sectionmark{References}
\renewcommand{\sectionmark}[1]{\markright{\thesection.\ #1}}

\bigskip
\medskip

{Alexander Borichev:
Aix Marseille Universit\'e, CNRS, Centrale Marseille, I2M, UMR 7373, 13453 Marseille,
France
\newline {\tt borichev@cmi.univ-mrs.fr}
\smallskip
\newline \phantom{x}\,\, Alon Nishry:
 Department of Mathematics, University of Michigan, Ann Arbor, USA
\newline {\tt alonish@umich.edu}
\smallskip
\newline \phantom{x}\,\, Mikhail Sodin:
 School of Mathematics, Tel Aviv University, Tel Aviv, Israel
\newline {\tt sodin@post.tau.ac.il}
}

\end{document}